\newtheorem{thm}{Theorem}[section]
\newtheorem{cor}[thm]{Corollary}
\newtheorem{lem}[thm]{Lemma}
\newtheorem{prop}[thm]{Proposition}
\newtheorem*{conjA}{Conjecture A}
\theoremstyle{definition}
\newtheorem{defn}[thm]{Definition}
\newtheorem{example}[thm]{Example}
\theoremstyle{remark}
\newtheorem{rem}[thm]{Remark}
\newcounter{numl}
\newcommand{\labelnuml}{\textup{(\roman{numl})}}
\newenvironment{numlist}{\begin{list}{\labelnuml}%
{\usecounter{numl}\setlength{\leftmargin}{0pt}%
\setlength{\itemindent}{2\parindent}%
\setlength{\itemsep}{\smallskipamount}\def
\makelabel ##1{\hss \llap {\upshape ##1}}}}{\end{list}}
\newcommand{\tauc}{{\mathcal T}}
\def\cal{\mathcal}
\def\bb{\mathbb} 
\def\a{\alpha } 
\def\b{\beta }
\def\g{\gamma }
\def\D{\Delta}
\def\g{\gamma }
\def\G{\Gamma }
\def\Z{\mathbb Z}
\def\R{\mathbb R}
\def\C{\mathbb C}
\def\N{\mathbb N}
\def\l{\lambda }
\def\O{\Omega }
\def\s{\sigma }
\def\deg{{\rm deg\ }}
\def\det{{\rm det}\ }
\def\omlog{\theta}
\def\ot{\otimes }
\def\part{\partial }
\def\bpart{\bar\partial }
\def\Re{{\rm Re}}
\def\Im{{\rm Im}}
\def\w{\wedge }
\def\Res{{\rm Res}}
\def\Pic{{\rm Pic}}
\def\cL{{\mathcal L}}
\begin{document}


\title[Lee classes on complex surfaces] {Twisted differentials  and Lee classes of locally conformally symplectic complex surfaces}

\author[V. Apostolov]{Vestislav Apostolov} \address{Vestislav Apostolov \\ Department of Mathematics\\
Laboratoire Jean Leray\\
2, Rue de la Houssini\`ere - BP 92208\\
F-44322 Nantes, FRANCE\\ and \\ Institute of Mathematics and Informatics\\ Bulgarian Academy of Sciences} 
\email{Vestislav.Apostolov@univ-nantes.fr}

\author[G. Dloussky]{Georges Dloussky}\address{Georges Dloussky, Aix-Marseille University, CNRS, Centrale Marseille, I2M, UMR 7373, 13453\\
39 rue F. Joliot-Curie 13411\\
Marseille Cedex 13, France}
\email{georges.dloussky@univ-amu.fr}

\thanks{V.A. was supported in part by a ``Connect Talent'' Grant of the R\'egion des Pays de la Loire in France. He is also grateful to the Institute of Mathematics and Informatics of the Bulgarian Academy of Sciences where a part of this project was realized. G.D. thanks the University of Nantes for their support and hospitality. 
The authors warmly thank S. Dinew and A. Zeriahi for their advise on the theory of PSH functions. V.A. is grateful to B. Chantraine for his interest in our work and  valuable discussions on the topology of LCS manifolds. }

\date{\today}

\begin{abstract} We study the set of deRham classes  of Lee $1$-forms of the locally conformally symplectic (LCS) structures taming the complex structure of a compact complex surface in the Kodaira class VII,  and show that the existence of non-trivial upper/lower bounds with respect to the degree function correspond respectively to the existence of certain negative/non-negative PSH functions on the universal cover. We use this to prove that the set of Lee deRham classes of taming LCS is connected, as well as to obtain an explicit negative upper bound for this set on the hyperbolic Kato surfaces. This leads to a complete description of the sets of Lee classes on the known examples of class VII complex surfaces, and to a new obstruction to the existence of bi-hermitian structures on the hyperbolic Kato surfaces of the intermediate type. Our results also reveal a link between  bounds of the set of Lee classes and non-trivial logarithmic holomorphic $1$-forms with values in a flat holomorphic line bundle. 
\end{abstract}

\dedicatory{To the memory of  Marco Brunella  whose work inspired us.}

\maketitle


\section{Introduction} The classification, up to biholomorphisms,  of the compact complex manifolds of complex dimension $2$ was reduced by the seminal works of Enriques and Kodaira to understanding the minimal models of 10 classes of complex surfaces~\cite[Tab.10]{bpv}. The description of these minimal models is essentially optimal in the case when the complex surfaces admit K\"ahler metrics, a purely topological condition  equivalent to  the first Betti number being even~\cite{buchdahl, lamari, siu}.  On the other hand, in the non-K\"ahler case, there is a still open classification conjecture regarding the minimal complex surfaces in the class {\rm VII}, i.e.  those having the first Betti number equal to $1$ and negative Kodaira dimension.  It is now known by the works of~\cite{D01, K77, nakamura-survey, andrei} that such a surface must be given by a list of \emph{known examples} if one can show that, when the second Betti number is positive, the surface admits a \emph{global spherical shell} (GSS). The existence of  such a GSS is often referred to as the \emph{GSS Conjecture} (see~\cite{nakamura-survey} and the references therein).  Whereas some structure results about the existence of compatible hermitian metrics, and especially locally conformally  K\"ahler hermitian metrics~\cite{Belgun, B1,B2, GO, FP19},  on the complex surfaces in the Kodaira class VII were obtained by assuming the GSS Conjecture, our thesis in this paper is that getting such structure results, and studying the geometric invariants associated to them, for possibly unknown surfaces is a promising angle of attack of the classification conjecture.

\smallskip
In the above general vein, this paper is a continuation of our previous work \cite{lcs},  in which we showed that \emph{any}  non-K\"ahler compact complex  surface $S=(M, J)$  admits  hermitian metrics  induced by locally conformally symplectic (LCS) $2$-forms $\omega$ which tame the underlying complex structure $J$. 

Recall that a LCS structure $\omega$ is a non-degenerate $2$-form on $M$,  which is closed in the twisted sense that
\[ d_{\alpha} \omega := d\omega - \alpha \wedge \omega=0, \]
where $\alpha$ is a closed $1$-form on $M$, called  the \emph{Lee form} of $\omega$. The taming condition means that the $(1,1)$-part $F:= (\omega)^{1,1}$ of $\omega$ with respect to $J$ is positive-definite, i.e. is the fundamental form of a hermitian structure $(g, J)$ on $S$. This definition is tailored to conformal geometry:  if  $\tilde \omega = e^{-f}\omega$ for some smooth function $f$, then $\tilde \omega$ is a taming LCS structure on $S$ with Lee form $\tilde \alpha = \alpha + df$,  and induced hermitian metric $\tilde g = e^{-f} g$.

\smallskip
 Given a compact complex surface $S$,  we introduce the following  subset of the first deRham cohomology group $H^1_{dR}(S, \R)$:
\[\tauc(S):=\{ [\alpha] \in H^1_{dR}(S, \R) \, | \, \alpha \, \textrm{is a Lee form of a taming LCS structure on }  S\}, \]
which is a natural complex geometric invariant of $S$.

\smallskip
The  fact that $\tauc(S)$ is non-empty contrasts the well-known obstruction to the existence of taming symplectic structures, which forces $b_1(S)$ to be even~\cite{gauduchon,lamari} and $S$ to be K\"ahler~\cite{buchdahl,lamari}.  In this latter case, it is  also observed in \cite{lcs} that $\tauc(S)=\{0\}$ reduces to a single trivial class. 

\smallskip
Thus motivated, we have initiated in \cite{lcs,lcs1} a systematic investigation of the set $\tauc(S)$ on the \emph{non-K\"ahler} compact complex surfaces.  This naturally extends the related invariant studied  in~\cite{Belgun,B1,B2,tsukada,OV}  in the case of  \emph{locally conformally K\"ahler hermitian metrics}, i.e. when $\omega$ is a LCS structure which not only tames the underlying complex structure but is also compatible with it.   However, by the results in \cite{Belgun},  not all compact complex surfaces admit locally {locally conformally K\"ahler hermitian metrics}, so the proposed extension is meaningful even for the known examples of non-K\"ahler complex surfaces.

\smallskip
The question of determining $\tauc(S)$ is particularly interesting and yet difficult to answer in the case when  $b_1(S)=1$ and the $H^0(S, K_S^m)=0, \, \forall m \ge 1$, i.e. on the complex surfaces in {the Kodaira class {\rm VII}}.  To state our results, we  first order $H^1_{dR}(S, \R)$  by the degree function  with respect to  some Gauduchon metric $g$ on $S$, i.e.  for $a, b\in H^1_{dR}(S, \R)$ we let $\cal L_{a}$ and $\cal L_b$ be the corresponding flat holomorphic line bundles  associated to the flat real line bundles determined via $a$ and $b$, and say that $a\le b$ (resp. $a<b$) if ${\rm deg}_g(\cal L_a) \le {\rm deg}_g(\cal L_b)$ (resp. if ${\rm deg}_g(\cal L_a) < {\rm deg}_g(\cal L_b)$). It is well-known~\cite{Teleman} that the relation $a\le b$  (resp. $a<b$) is independent of the choice of Gauduchon metric. In these terms, we have shown in \cite{lcs} that
\[ \tauc(S) \subseteq (-\infty, 0). \]
Furthermore,  in \cite{lcs1} we proved that $\tauc(S)$ is a non-empty \emph{open} subset,  unless $S$ is obtained by blowing up Inoue--Bombieri surface in which case $\tauc(S)=\{a\}$ is a single point. 

\smallskip
In this paper, we improve the above statements and establish the following general structure result:
\begin{thm}\label{thm:tau-structure} Let $S$ be a compact complex surface in the Kodaira class {\rm VII}. Then $\tauc(S)$ is either a single point, in which case $S$ is obtained from an Inoue--Bombieri surface by blowing points, or else  $\tauc(S)$ is an open interval
\[ \tauc(S) = (d_ 0, b_0) \subseteq (-\infty, 0).\]
\end{thm}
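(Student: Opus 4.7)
Since $S$ has class {\rm VII}, $b_1(S)=1$, so the degree function identifies $H^1_{dR}(S,\R)$ with $\R$ as a totally ordered space. Combining the result \cite{lcs} that $\tauc(S)\subseteq(-\infty,0)$ with \cite{lcs1} (stating that $\tauc(S)$ is a single point on blown-up Inoue--Bombieri surfaces and open otherwise), the first alternative of the theorem is already handled, and in the remaining case $\tauc(S)$ is a nonempty open subset of $\R$ bounded above by $0$, hence a countable disjoint union of open intervals. Setting $d_0:=\inf\tauc(S)$ and $b_0:=\sup\tauc(S)$, the desired conclusion $\tauc(S)=(d_0,b_0)$ is equivalent to the \emph{connectedness} of $\tauc(S)$.

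To prove connectedness, I would argue by contradiction along the lines foreshadowed in the abstract. Suppose $\tauc(S)$ is disconnected; then one can pick a class $c\in(d_0,b_0)\setminus\tauc(S)$ that is simultaneously the right endpoint of some component of $\tauc(S)$ (containing some $[\alpha_0]<c$) and the left endpoint of another (containing some $[\alpha_1]>c$). The plan is to invoke the correspondence, which should be the central structural result of the paper, between non-trivial upper (resp.\ lower) bounds of $\tauc(S)$ and the existence of a non-constant negative (resp.\ non-negative) PSH function on the universal cover $\tilde S$, equivariant under $\pi_1(S)$ with the character associated to the bounding class. Applied at $c$, this produces simultaneously a negative PSH function $\phi_-<0$ and a non-negative PSH function $\phi_+\ge 0$ on $\tilde S$ with the \emph{same} equivariance law.

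The contradiction then arises from the joint existence of $\phi_-$ and $\phi_+$ with identical transformation character: the ratio $\phi_+/(-\phi_-)$ is a well-defined non-negative $\pi_1(S)$-invariant function on $\tilde S$, descending to a function on the compact surface $S$; combining its subharmonicity properties with a maximum-principle argument forces $\phi_+\equiv 0$, against its non-triviality. Alternatively, one can compare $\phi_\pm$ via the Gauduchon metric: both give, by the standard $dd^c$-calculation, sections of the same flat holomorphic line bundle $\cL_c$ with conflicting positivity properties, again contradicting the compactness of $S$.

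\emph{Main obstacle.} The crux is to upgrade the PSH characterization from the global extremes $b_0$, $d_0$ of $\tauc(S)$ to any \emph{interior} endpoint of a component, by controlling the limiting behavior of taming LCS structures whose Lee classes approach $c$ from within a component of $\tauc(S)$. Extracting a nontrivial PSH function in the limit requires uniform estimates (normalization against a fixed Gauduchon background, boundedness of conformal potentials modulo the character) and a compactness/Choquet-type extraction argument preserving the strict sign; this is where the principal technical work should reside.
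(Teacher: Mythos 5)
Your reduction to connectedness is exactly right, and the overall shape of your plan (produce a ``dual'' object at a class $c\notin\tauc(S)$ lying strictly between two Lee classes, then derive a contradiction from sign considerations) matches the paper. But there is a genuine gap at precisely the step you flag as the main obstacle: the correspondence between bounds of $\tauc(S)$ and automorphic PSH functions (Proposition~\ref{p:main}, Proposition~\ref{p:obstruction}) is a statement about upper/lower bounds of the \emph{whole} set $\tauc(S)$, whereas your $c$ is an interior gap point, not a bound, so the correspondence cannot simply be ``applied at $c$''. Your proposed fix --- extracting a PSH function as a limit of taming LCS structures whose Lee classes approach $c$ from within a component, via uniform estimates and a compactness argument --- is neither carried out nor needed. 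The paper circumvents the issue entirely: since $c\notin\tauc(S)$, the Hahn--Banach/Harvey--Lawson duality (Propositions~\ref{p:conformal-dual} and~\ref{p:lcs-characterization}) \emph{directly} yields a nonzero degree-zero current $\tau$ with $d_{\gamma}d^c_{\gamma}\tau\ge 0$, with no a priori sign and no limiting process; then Proposition~\ref{p:sign} (a linear elliptic argument: solve $P(u)=w$ for arbitrary $w\ge 0$ and invoke the strong maximum principle) shows that the existence of some $a_1\in\tauc(S)$ with $a_1<c$ forces $\tau\le 0$; finally Proposition~\ref{p:obstruction}(a) converts the negativity of $\tau$ into the bound $\tauc(S)\subset(-\infty,c)$, contradicting $a_2>c$.

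Two further problems with your sketch. First, a gap point of a disconnected open subset of $\R$ need not be simultaneously a right endpoint of one component and a left endpoint of another (the complement may contain an interval); this is inessential but should be phrased correctly. Second, even granting the existence of $\phi_-<0$ and $\phi_+\ge 0$ with the same automorphy constant, your proposed contradiction via the ratio $\phi_+/(-\phi_-)$ does not work as stated: a quotient of PSH functions has no useful subharmonicity. The clean contradiction is that Proposition~\ref{p:obstruction} parts (a) and (b) would then give $\tauc(S)\subset(-\infty,c)$ and $\tauc(S)\subset(c,0)$ simultaneously, i.e.\ $\tauc(S)=\emptyset$, against the non-emptiness from \cite{lcs}. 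You should also treat separately the case where the minimal model has vanishing second Betti number (handled in the paper via \cite{andrei} and the known computations of $\tauc$ for Hopf and Inoue--Bombieri surfaces), and note that the relevant cover in Proposition~\ref{p:main} is the minimal $\Z$-cover $\hat S$, not the universal cover.
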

The main new idea in the proof of Theorem~\ref{thm:tau-structure} is a relation between the existence of upper and lower bounds for the set $\tauc(S) \subset (-\infty, 0)$ with the existence of, respectively,  negative and positive currents of degree $0$ on $S$, satisfying a twisted PSH condition, see Propositions~\ref{p:obstruction} and \ref{p:sign} below.  This new observation can be loosely stated as follows:

\begin{prop}\label{p:main}(see Proposition~\ref{p:obstruction} and Lemma~\ref{l:cover})  Suppose $S$ is a compact complex surface in the Kodaira class {\rm VII} and denote by  $\hat S$ the minimal $\Z$-cover of $S$ such that $H^1(\hat S, \R)=\{0\}$, see Definition~\ref{d:cyclic}.
\begin{enumerate}
\item[\rm (a)] There exists a deRham class $b<0$ such that $\tauc(S)\subset (-\infty, b)$ if and only if  $\hat S$  admits a  negative  strictly PSH function which is multiplicative automorphic with respect to the action of the cyclic generator of the covering subgroup in $Aut(\hat S)$,  with constant of automorphy $C>1$.  
\item[\rm (b)] There exists a deRham class $d<0$ such that $\tauc(S)\subset (d, 0)$ if and only if $\hat S$  admits a  non-negative  strictly PSH function which is multiplicative automorphic with respect to the action of the cyclic generator,  with constant of automorphy $C>1$. 
\end{enumerate}
\end{prop}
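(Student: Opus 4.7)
The plan is to deduce Proposition~\ref{p:main} by combining Propositions~\ref{p:obstruction} and~\ref{p:sign}, which characterize the existence of upper, resp.\ lower, strict bounds on $\tauc(S)$ in terms of twisted plurisubharmonic objects on $S$ with values in a flat line bundle, together with Lemma~\ref{l:cover}, which identifies such twisted PSH objects on $S$ with ordinary multiplicative automorphic PSH functions on the cyclic cover $\hat S$.

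Setup and dictionary. Since $b_1(S)=1$ and $H^1(\hat S,\R)=0$, every closed real $1$-form $\alpha$ on $S$ pulls back to $\pi^*\alpha=df$ for a smooth function $f$ on $\hat S$, unique up to an additive constant and satisfying $\gamma^*f - f = p(\alpha) := \int_\gamma\alpha$. By Teleman's formula, $p(\alpha)$ is a fixed nonzero multiple of ${\rm deg}_g(\cL_\alpha)$ whose sign is independent of the Gauduchon metric $g$. Hence, after fixing a sign convention, any multiplicative automorphic function $\phi$ on $\hat S$ with $\gamma^*\phi=C\phi$ and $C>1$ corresponds, via $\phi=\pm e^f$, to a closed $1$-form $\alpha$ whose flat bundle $\cL_\alpha$ has strictly negative degree, with the sign of $\phi$ separating cases (a) and (b). In this dictionary, strict PSH of $\phi$ corresponds to strict positivity of the twisted $(1,1)$-form $i\partial\bar\partial\phi$, which descends via Lemma~\ref{l:cover} to a strictly positive twisted PSH current on $S$ with values in $\cL_\alpha$. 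The sign of $\phi$ distinguishes the two obstructive behaviors: a negative $\phi$ obstructs the Lee class of a taming LCS form from approaching $0$ from below (yielding an upper strict bound), while a non-negative $\phi$ obstructs it from escaping to $-\infty$ (yielding a lower strict bound).

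Main obstacle. With the dictionary in place, the proposition reduces to Propositions~\ref{p:obstruction} and~\ref{p:sign}. The hard part, hidden in those propositions, is the forward direction of each of (a) and (b): starting from the bound on $\tauc(S)$, one must produce the twisted PSH current on $S$, or equivalently the multiplicative automorphic PSH function on $\hat S$, with the correct sign. This requires extracting a weak limit from a sequence of taming LCS structures whose Lee classes accumulate at the extremal degree, and then solving a degenerate twisted Monge-Amp\`ere-type equation to regularize the limit into a PSH potential $\phi$ with a definite sign on $\hat S$. Ensuring the correct sign of $\phi$ (negative for the upper bound, non-negative for the lower bound), together with its local boundedness and the strict positivity of $i\partial\bar\partial\phi$, is the central analytic difficulty, and is where the PSH regularization techniques for which the authors thank S.~Dinew and A.~Zeriahi are invoked.
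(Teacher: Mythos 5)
Your overall decomposition is the right one and matches the paper's: Proposition~\ref{p:main} is indeed obtained by combining Proposition~\ref{p:obstruction} (bounds on $\tauc(S)$ $\Leftrightarrow$ existence of a signed degree-$0$ current $\tau$ with $d_\beta d^c_\beta\tau\ge 0$, $\neq 0$) with Lemma~\ref{l:cover} (such currents on $S$ $\Leftrightarrow$ signed multiplicative automorphic PSH functions on $\hat S$), and your dictionary between closed $1$-forms on $S$, automorphy constants $C=e^{\int_\gamma\alpha}$, and the sign of $\deg_g(\cL_\alpha)$ is essentially Remark~\ref{r:convention}. However, your description of how the hard (forward) direction is actually proved is not what the paper does, and as written it is a genuine gap rather than a proof. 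There is no weak limit of taming LCS structures ``accumulating at the extremal degree'' and no degenerate twisted Monge--Amp\`ere equation anywhere in the argument. The paper proves the converse direction of Proposition~\ref{p:obstruction}(a) by a Hahn--Banach separation argument in the spirit of Harvey--Lawson: one considers the closed convex cone $\mathcal P$ of positive $(1,1)$-currents and the weakly compact convex set $\mathcal K$ of normalized currents of the form $d_\beta d^c_\beta\tau$ with $\tau\le 0$ (so the sign of $\tau$ is imposed from the start, not extracted afterwards), and shows that $\mathcal P\cap\mathcal K=\emptyset$ would yield a smooth semi-positive $(1,1)$-form $\tilde F$ with $d_\beta d^c_\beta\tilde F>0$; combining this with the existence of a Gauduchon metric satisfying \eqref{twisted-pluriclosed} for some class in $\tauc(S)$, the computation \eqref{computation} reduces everything to a second-order linear inequality for a conformal factor, which is contradicted by the strong maximum principle at a minimum point. (The case where the minimal model has $b_2=0$ is handled separately via the classification results of \cite{andrei} and \cite{lcs,lcs1}.)

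Your proposed alternative route would need substantial justification that you do not supply: it is not clear why a sequence of taming LCS forms with Lee classes approaching the bound should converge weakly to anything, nor why a limiting object would be $d_\beta d^c_\beta$-exact with a potential of definite sign, nor how a Monge--Amp\`ere regularization would preserve the automorphy and the sign. In addition, two smaller points: the statement requires only $T=d_\beta d^c_\beta\tau\ge 0$ with $T\neq 0$ (whence ``strictly PSH'' in the weak sense of $dd^c\hat u\ge 0$, $dd^c\hat u\neq 0$ after passing to the cover), not strict positivity of $i\partial\bar\partial\phi$ everywhere as you assert; and Proposition~\ref{p:sign} is not needed for Proposition~\ref{p:main} itself --- it is a complementary statement (determining the sign of a current that is merely assumed to exist) used in the proof of connectedness of $\tauc(S)$ in Theorem~\ref{thm:tau-connected}.
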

The proof of Proposition~\ref{p:main}  and the derivation of Theorem~\ref{thm:tau-structure} rely on relatively elementary methods which come in the form of theory of linear second order elliptic differential operators,  and applications of Hahn-Banach's theorem as in \cite{HL}.

\smallskip
Specializing to the \emph{known} examples of class VII complex surfaces, Proposition~\ref{p:main}  and Theorem~\ref{thm:tau-structure}  can be used to obtain the following sharp classification result:

\begin{thm}\label{main} Suppose $S$ is a compact complex surface obtained from a known minimal complex surface in the Kodaira class {\rm VII},  by blowing up points. 
 \begin{enumerate}
 \item[\rm (a)] If $S$ is obtained by  blowing up points of an Inoue--Bombieri surface,  then $\tauc(S)=\{a\} \subset (-\infty, 0)$ consists of a single point;
 \item[\rm (b)] If $S$ is obtained by blowing up points of a Hopf surface, then $\tauc(S)=(-\infty, 0)$;
 \item[\rm (c)] If $S$ is obtained by  blowing up points of a hyperbolic Kato surface, then  $\tauc(S)=(-\infty, b)$ for some $b<0$;
 \item[\rm (d)]  If  $S$ is obtained by  blowing up points of an Enoki surface, then $\tauc(S)=(-\infty, 0)$. 
 \end{enumerate}
 \end{thm}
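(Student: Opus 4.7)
My plan is to treat Theorem~\ref{main} as a case-by-case refinement of Theorem~\ref{thm:tau-structure}, using Proposition~\ref{p:main} to translate the location of the endpoints of $\tauc(S)$ into existence or non-existence of strictly PSH multiplicative automorphic functions with constant $C>1$ on the minimal $\Z$-cover $\hat S$. A preliminary step handles the blow-up aspect: any bimeromorphism $\tilde S\to S$ between surfaces in class~{\rm VII} induces an isomorphism $H^1_{dR}(\tilde S,\R)\cong H^1_{dR}(S,\R)$, and a taming LCS form with prescribed Lee class can be transported across the exceptional divisor by a local K\"ahler correction supported in a tubular neighborhood, giving $\tauc(\tilde S)=\tauc(S)$. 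I may therefore assume from the outset that $S$ is minimal.

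Case (a) is literally the first alternative of Theorem~\ref{thm:tau-structure}. For case (b) I identify $\hat S$ with $\C^2\setminus\{0\}$, the generator $\phi$ of the deck action being the contraction defining the Hopf surface; a strictly PSH function $u$ on $\hat S$ with $u\circ\phi=Cu$, $C>1$, satisfies $u(\phi^n(p))=C^n u(p)$ with $\phi^n(p)\to 0$, and Hartogs extension across the origin combined with upper semicontinuity then rules out $u$ being either non-negative strictly PSH or negative strictly PSH. Proposition~\ref{p:main} then forces both endpoints of the interval given by Theorem~\ref{thm:tau-structure} to be extreme, so $\tauc(S)=(-\infty,0)$. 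For case (d) an Enoki surface contains an elliptic curve $E$ whose lifts to $\hat S$ are compact; a strictly PSH automorphic function $u$ of either sign would restrict to a subharmonic, hence constant, function on such a lift, and multiplicative automorphy with $C>1$ would force that constant to vanish, contradicting strict plurisubharmonicity on a neighborhood of $E$. Thus both endpoints are again extreme.

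Case (c) is the main point. By Proposition~\ref{p:main}(a) and Theorem~\ref{thm:tau-structure}, the task reduces to constructing on $\hat S$ a \emph{negative} strictly PSH function $u$ with $u\circ\phi_0=Cu$, $C>1$, where $\phi_0$ generates the deck group, while ruling out the non-negative analogue. For the construction I would start from the germ of the holomorphic contraction at the base point of the GSS: its hyperbolic nature provides a local dynamical Green-type function $G$ which is negative, strictly PSH, and multiplicatively automorphic on its basin of attraction with constant $C>1$ determined by the eigenvalues of the linearization of $\phi_0$ at $0$. A $\phi_0$-equivariant globalization of $G$ across $\hat S$, followed by a small Bergman-type correction to enforce strict plurisubharmonicity everywhere, would then yield the desired $u$. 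The non-existence of the non-negative analogue is obtained from the same cycle-of-rational-curves obstruction as in (d), since hyperbolic Kato surfaces also contain such a cycle with compact lifts to $\hat S$.

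The hardest step is the $\phi_0$-equivariant globalization of the dynamical Green function in case (c): its natural domain is only the basin of the local contraction, and extending it across the rest of the cover in a way that respects both the equivariance and the strict negativity requires a careful use of the specific hyperbolic dynamics of Kato contractions, as opposed to the parabolic behavior in the Enoki case. This is the point where the intermediate/hyperbolic nature of the surface enters decisively, and I would expect both the bulk of the technical work and, presumably, the mechanism of the new obstruction to bi-hermitian structures announced in the abstract to be concentrated there.
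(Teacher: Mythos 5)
Your high-level architecture (reduce to the minimal model, then use Theorem~\ref{thm:tau-structure} together with the PSH--function characterization of the two endpoints) is the right frame for the \emph{upper} bounds, but the case-by-case arguments contain errors that are not repairable as stated. The most serious one is an internal contradiction created by your insistence on \emph{strictly} PSH functions: every Kato surface (Enoki \emph{and} hyperbolic) has rational curves whose individual lifts to $\hat S$ are compact $\mathbb{CP}^1$'s, and a strictly PSH function can never exist near a compact curve (its restriction would be strictly subharmonic on a compact Riemann surface). So the ``compact curve'' obstruction you invoke in (d) would equally destroy the function you need to build in (c). The correct objects are automorphic PSH functions in $L^1_{loc}$ (equivalently, degree-$0$ currents $\tau$ with $d_\beta d^c_\beta\tau\ge 0$, $\neq 0$, as in Propositions~\ref{p:obstruction} and Lemma~\ref{l:cover}); the ``strictly PSH'' wording of Proposition~\ref{p:main} is only a loose paraphrase. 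With the correct notion, the Enoki/Hopf non-existence argument must be global, not curve-local: the paper uses that the complement of the cycle in an Enoki surface (resp.\ $\C^*\times\C\subset\C^2\setminus\{0\}$ for Hopf) is covered by $\C^2$, so a PSH function bounded above there is constant by Liouville. Your Enoki argument is also factually wrong on two counts: a generic Enoki surface has no elliptic curve at all, and when one exists its preimage in $\hat S$ is a copy of $\C^*$, not a union of compact curves.

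In case (c) your construction is too vague and rests on an incorrect dynamical picture. The defining germs of hyperbolic Kato surfaces are super-attracting (e.g.\ $F(z_1,z_2)=(\lambda_1z_1z_2^s+P(z_2)+\cdots,\,z_2^k)$ or $(z_1^pz_2^q,z_1^rz_2^s)$), so there is no hyperbolic linearization and the automorphy constant is \emph{not} read off eigenvalues of $DF(0)$: it is $k$ (intermediate type) or the Perron eigenvalue $\lambda_1$ of the exponent matrix (Inoue--Hirzebruch). The correct ``Green functions'' are the explicit $\hat u=\log|z_2|$ and $\hat u=\xi\log|z_1|+\eta\log|z_2|$, which are pluriharmonic off the maximal divisor (so no ``Bergman-type correction'' to strict plurisubharmonicity is possible or needed), and the equivariant globalization you flag as the hard step is in fact done by backward iteration of the functional equation $\hat v(x)=\hat v(\gamma(x))-c$ over the fundamental annuli of $\hat S$ (Lemma~\ref{pshZrev}). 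Finally, for the lower endpoint $d_0=-\infty$ in (b), (c), (d) the paper does not prove non-existence of non-negative automorphic PSH functions directly; it quotes the explicit constructions of Brunella and Gauduchon--Ornea producing taming LCS/LCK structures with Lee classes tending to $-\infty$, and then feeds this into Proposition~\ref{p:obstruction}(b). If you want to keep your self-contained route you must supply a genuine non-existence proof for non-negative automorphic PSH currents on Kato covers, which your current argument does not provide.
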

 In the above statement, the surfaces listed in (c) and (d)  are the minimal  class VII complex surfaces with a GSS constructed by M. Kato~\cite{K77,D84},  and  \emph{hyperbolic Kato surfaces}  refer to the Kato surfaces of intermediate type or the Inoue--Hirzebruch surfaces in the terminology of \cite{Brunella14}.  As we have mentioned, (a)  follows from \cite{lcs1} whereas (b) is established in \cite{lcs,tsukada}, using the constructions of \cite{Belgun, GO}. The non-boundedness from below of $\tauc(S)$ in the cases (c) and (d) is due to Brunella~\cite{B1}.  To the best of our knowledge,  (c) provides the first known constraint on $\tauc(S)$,   other than the inclusion $\tauc(S)\subset (-\infty, 0)$ established in \cite{lcs}. In this case, our result is new even for the set of Lee classes of the locally conformally K\"ahler metrics, and  in fact it partially answers questions raised in \cite[Rem.9]{B1}. Similarly, (d) provides the first proof that $\tauc(S)=(-\infty, 0)$ on the Enoki surfaces. It will be interesting to know whether or not  in the cases (c) and (d) of the above theorem, $\tauc(S)$ coincides with the set of Lee classes of the locally conformally K\"ahler metrics on $S$.

\smallskip
The existence of PSH functions on $\hat S$ as in Proposition~\ref{p:main} was studied by Brunella~\cite{Brunella13,Brunella14} who showed that if such a PSH function has also \emph{analytic singularities}, then $S$ must be a known complex surface (see also Section~\ref{s:conjecture} below). We thus expect that when $\tauc(S) \subsetneq (-\infty, 0)$ in Theorem~\ref{thm:tau-structure}, one would be able to reduce the situation to the one studied by Brunella, and propose the following
\begin{conjA}\label{c:main} Suppose $S$ a compact complex surface in the Kodaira class {\rm VII}. 
\begin{enumerate}
\item[\rm (a)] If there exists a  deRham class $b\in (-\infty, 0)\subset H^1_{dR}(S, \R)$ such that $\tauc(S)\subset (-\infty, b)$, then  $S$ is obtained from either an Inoue--Bombieri or a hyperbolic Kato surface by blowing up points.
\item[\rm (b)] If there exists a  de Rham class $d\in (-\infty, 0) \subset H^1_{dR}(S, \R)$ such that $\tauc(S) \subset (d, 0)$, then $S$ is obtained from an Inoue--Bombieri  surface by blowing up points.
\end{enumerate}
\end{conjA}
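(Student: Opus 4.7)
The plan is a two-step reduction. First, Proposition~\ref{p:main} translates each hypothetical bound on $\tauc(S)$ into the existence, on the minimal $\Z$-cover $\hat S$, of a smooth strictly PSH function that is sign-definite (negative in case~(a), non-negative in case~(b)) and multiplicatively automorphic with constant $C>1$ under the cyclic generator $\gamma$ of the deck group. Second, upgrade this PSH function to one with \emph{analytic singularities}, so that the classification theorems of Brunella~\cite{Brunella13,Brunella14}, already quoted in the paragraph preceding the conjecture, apply and force $S$ to be obtained from a known minimal class~VII surface by blowing up. With $S$ on the known list, the explicit description of $\tauc(S)$ given by Theorem~\ref{main} immediately selects which classes are compatible with each bound.

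In case~(a), denote by $\phi_-\colon\hat S\to(-\infty,0)$ the PSH function produced by Proposition~\ref{p:main}(a), so that $\phi_-\circ\gamma=C\,\phi_-$. Since $\phi_-<0$ and $C>1$, the iterates $\phi_-(\gamma^n x)=C^n\phi_-(x)$ tend to $-\infty$ at one end of the $\Z$-orbit and to $0^-$ at the other, and $\log(-\phi_-)$ is additively automorphic with constant $\log C$, descending to a section of a flat real line bundle on $S$---the kind of object intimately related to the ``logarithmic holomorphic $1$-forms with values in a flat line bundle'' alluded to in the abstract. An approximation argument following Demailly should replace $\phi_-$ by a PSH function with analytic polar locus, while preserving both its strict negativity and its automorphy factor; once such an object exists on $\hat S$, Brunella's theorems force $S$ to admit a GSS (or be Inoue--Bombieri), and parts~(b) and (d) of Theorem~\ref{main} eliminate the Hopf and Enoki cases, leaving exactly the Inoue--Bombieri and hyperbolic Kato classes.

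In case~(b), the function $\phi_+\ge 0$ produced by Proposition~\ref{p:main}(b) satisfies $\phi_+\circ\gamma=C\phi_+$ with $C>1$, so $\phi_+(\gamma^n x)\to 0$ as $n\to-\infty$ and the vanishing locus of $\phi_+$ is non-empty, accumulating on an entire end of $\hat S$. After the same analytic-singularity upgrade, the zero locus of the regularized approximant produces an effective divisor on $S$ whose intersection-theoretic properties, combined with the strict non-negativity, should be incompatible with the negative ``Green-function'' geometry underlying every hyperbolic Kato surface and with the flat structure of Hopf and Enoki surfaces. Among the known examples, only Inoue--Bombieri satisfies a bound of the form $\tauc(S)\subset(d,0)$, which gives the conclusion.

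The decisive obstacle, and the apparent reason the statement is posed only as a conjecture, is the analytic-singularity upgrade itself. Standard Demailly regularization yields decreasing sequences of PSH functions with analytic singularities approximating a given PSH function, but preserving simultaneously the \emph{exact} multiplicative automorphy $\phi\circ\gamma=C\phi$ and the one-sided sign condition---especially the non-negativity demanded in case~(b)---is delicate, because approximation naturally introduces errors that may violate strict signs and perturb the automorphy factor. A secondary obstacle is that Brunella's arguments exploit the divisorial structure of the analytic polar locus via intersection-theoretic identities, and a purely smooth automorphic PSH function does not by itself provide such divisorial data, so additional geometric input---presumably coming from the flat line bundle on $S$ that appears naturally in case~(a)---will have to be brought in to complete the reduction.
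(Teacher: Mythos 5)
This statement is Conjecture~A: the paper does not prove it, and explicitly presents it as an open problem (noting only that it would follow from Theorem~\ref{main} if one assumed the GSS Conjecture). So there is no proof in the paper to compare yours against; the question is whether your sketch closes the gap, and it does not. Your first reduction step is correct and is exactly the route the authors indicate: Proposition~\ref{p:main} (via Proposition~\ref{p:obstruction} and Lemma~\ref{l:cover}) converts a bound on $\tauc(S)$ into a sign-definite, multiplicatively automorphic PSH function $\hat u$ on $\hat S$ (locally integrable, not smooth as you write). The decisive gap is the second step, and your proposed fix is a mismatch of hypotheses. Brunella's theorems, as recast in Theorem~\ref{thm:brunella}, require that the \emph{support of the positive current} $T=dd^c\hat u$ be empty or a pure one-dimensional analytic set. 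Demailly-type regularization with analytic singularities controls the \emph{polar locus} $\{\hat u=-\infty\}$ of the approximants, which is a different object: here $\hat u$ is bounded (negative, or non-negative), so its polar locus is already empty, and regularization gives no information whatsoever about where $dd^c\hat u$ is concentrated. No approximation scheme is known that forces $dd^c\hat u$ onto an analytic curve while preserving the exact automorphy $\hat u\circ\gamma=C\hat u$ and the sign; this is precisely why the statement is a conjecture rather than a corollary of Theorem~\ref{thm:brunella}.

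Your case~(b) has a second, independent gap: Brunella's characterizations \cite{Brunella13,Brunella14} concern \emph{negative} PSH functions (pluriharmonic ones giving Inoue--Bombieri, those with support on a curve giving hyperbolic Kato). There is no analogous classification in the literature, or in this paper, for \emph{non-negative} automorphic PSH functions on $\hat S$, so even granting an analytic-singularity upgrade you would have no theorem to invoke; the claimed incompatibility with the ``negative Green-function geometry'' of hyperbolic Kato surfaces is asserted, not argued. In short, your proposal correctly reproduces the authors' intended reduction and correctly locates the obstruction, but the obstruction is the whole content of the conjecture, and the specific tool you propose (Demailly regularization) is not adapted to it.
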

Of course, the above conjecture will be an immediate corollary of Theorem~\ref{main} if one assumes the Global Spherical Shell Conjecture~\cite{Nakamura}, but we would rather think about it as a plausible intermediate step in deriving a general classification. 

\smallskip
To illustrate the usefulness of Theorem~\ref{main},  recall that~\cite{Nakamura}  the minimal complex surfaces with a GSS  and  second Betti number equal to $k$ are all diffeomorphic to $M_k:=(S^1\times S^3) \sharp k \overline{\C P^2}$.  The underlying smooth manifold $M_k$  then admits three types of complex structures  with a  GSS,  described in the cases (b), (c) and (d) of  Theorem~\ref{main}. As  the taming condition is open in the $C^{\infty}$ topology of almost complex structures,  we then get from Theorem~\ref{main} the following stability  result for hyperbolic Kato surfaces under the action of the diffeomorphism group:
\begin{cor}\label{c:Cheeger-Gromov} Suppose $S=(M_k, J)$ is a hyperbolic Kato surface with $k=b_2(S)$, and $(\phi_i) \subset {\rm Diff}_0(M_k)$ is a sequence of diffeomorphisms in the connected component of the identity,  such that $\phi_i(J)$ converges in $C^{\infty}$ to an integrable complex structure $J_{\infty}$ on $M_k$.  If $S_{\infty}:=(M_k, J_{\infty})$ admits a GSS, then it must be a hyperbolic Kato surface.
\end{cor}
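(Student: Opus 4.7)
I would combine three ingredients: the invariance of $\tauc$ under ${\rm Diff}_0(M_k)$, the openness of the taming condition in the $C^\infty$ topology on almost complex structures (as highlighted in the sentence preceding the statement), and Theorem~\ref{main} applied to $S_\infty$. First, each $\phi_i$ is a biholomorphism $\phi_i\colon (M_k,J)\to (M_k,\phi_i(J))=:S_i$ and acts trivially on $H^1_{dR}(M_k,\R)$ by homotopy invariance, so pulling back taming LCS structures yields the equality of subsets $\tauc(S_i)=\tauc(S)$ of $H^1_{dR}(M_k,\R)$. Since $S$ is a hyperbolic Kato surface, Theorem~\ref{main}(c) supplies some $b<0$ with $\tauc(S_i)=(-\infty,b)$ for every $i$.

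The heart of the argument is the observation that taming persists under small perturbations of $J$. Fix any class $a\in H^1_{dR}(M_k,\R)$ with $b\le a<0$; the claim is that $a\notin\tauc(S_\infty)$. If instead $\omega_\infty$ were a taming LCS structure for $J_\infty$ with Lee class $a$, then since $\omega_\infty$ is a fixed smooth $2$-form and the pointwise taming inequality $\omega_\infty(X,J_p X)>0$ for $X\ne 0$ is open in $J_p$, compactness of $M_k$ combined with $\phi_i(J)\to J_\infty$ in $C^\infty$ would force $\omega_\infty$ to tame $\phi_i(J)$ for all sufficiently large $i$. Its Lee class, being a property of $\omega_\infty$ rather than of the complex structure it tames, would then give $a\in\tauc(S_i)=(-\infty,b)$, contradicting $a\ge b$. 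Hence $\tauc(S_\infty)\subseteq(-\infty,b]$ is a proper subset of $(-\infty,0)$.

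Finally, I would feed this into Theorem~\ref{main} applied to $S_\infty$. Since $S_\infty$ carries a GSS, its minimal model is not an Inoue--Bombieri surface (which carries none), so by Kato's classification it is Hopf, Enoki, or hyperbolic Kato. Cases (b) and (d) of Theorem~\ref{main} give $\tauc(S_\infty)=(-\infty,0)$, ruled out by the previous step, so the minimal model of $S_\infty$ must be a hyperbolic Kato surface. The step I expect to be the most subtle is the final promotion from ``blow-up of a hyperbolic Kato surface'' to ``hyperbolic Kato surface'' itself: this should follow from a rigidity property of the $C^\infty$-limit $J_\infty$ preventing the appearance of new $(-1)$-curves in the limit, and lies slightly outside the direct content of Theorem~\ref{main}.
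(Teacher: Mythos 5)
Your argument coincides with the paper's proof of Corollary~\ref{c:Cheeger-Gromov}: triviality of the ${\rm Diff}_0(M_k)$-action on $H^1_{dR}(M_k,\R)$ gives $\tauc(S_i)=\tauc(S)=(-\infty,b)$, openness of the pointwise taming inequality on the compact $M_k$ gives $\tauc(S_\infty)\subseteq\tauc(S_i)$ (which in fact yields the slightly sharper $\tauc(S_\infty)\subseteq(-\infty,b)$ rather than $(-\infty,b]$), and Theorem~\ref{main} then excludes the Hopf and Enoki alternatives. The final minimality point you flag is not treated in the paper's proof either, which concludes directly from Theorem~\ref{main}, so your proposal is, if anything, more explicit than the original on that step.
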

The above corollary is  potentially useful in the application of analytic methods,  such as the Streets--Tian theory of  the pluriclosed flow~\cite{Streets}. By  the results in \cite{DK98},  Corollary~\ref{c:Cheeger-Gromov} holds true in the setting of \emph{analytic  degenerations}  of $J$.

\smallskip
We  also find an \emph{explicit} upper bound $b$  for $\tauc(S)$ on the  hyperbolic Kato surfaces,  by relating the corresponding deRham class $b$ with the existence of a non-zero logarithmic holomorphic $1$-form with coefficients in the flat holomorphic line bundle determined by $b$ (see Section~\ref{s:explicit-bound} and Lemma~\ref{ex:log-holom}).  As another   application of Theorem~\ref{main}, we return to our initial motivation of studying LCS structures on class {\rm VII} surfaces~\cite{ABD},  which stems from the classification of \emph{bi-hermitian structures} on these complex surfaces~\cite{ABD,FP19,SY}. We  give now a new obstruction  to the existence of bi-hermitian structures  on  Kato surfaces.
\begin{thm}\label{thm:main-BH} (see Theorem~\ref{thm:BH}) There are intermediate type hyperbolic Kato surfaces $S$, satisfying $H^0(S, K^{-1}_S \otimes {\mathcal L}_{b})\neq 0$ where ${\mathcal L}_{b}$ is the flat holomorphic line bundle associated to a deRham class $b\in (-\infty, 0)$, but which do not admit any bi-hermitian structure.
\end{thm}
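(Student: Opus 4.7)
The plan is to combine the sharp upper bound of Theorem~\ref{main}(c) with the standard principle that a bi-hermitian structure on a class VII surface is simultaneously a taming LCS structure and a holomorphic Poisson structure. More precisely, if $(g,J_+,J_-)$ is a bi-hermitian structure on a class VII surface $S$ with $J_+\neq \pm J_-$, then $\omega := g(J_+\cdot,\cdot)-g(J_-\cdot,\cdot)$ is a LCS form taming $J_+$ with some Lee class $c\in \tauc(S)$, whereas the non-commutativity of $J_+$ and $J_-$ gives rise to a non-zero holomorphic section of $K_S^{-1}\otimes \mathcal{L}_{c}$. Thus the existence of a (non LCK-reduced) bi-hermitian structure on $S$ forces
\[ B(S) \cap \tauc(S) \neq \emptyset, \qquad B(S):=\{b\in H^1_{dR}(S,\R)\,|\,H^0(S,K_S^{-1}\otimes \mathcal{L}_b)\neq 0\}. \]

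Given this, my strategy is to exhibit intermediate type hyperbolic Kato surfaces $S$ for which $B(S)\cap (-\infty,0)$ is non-empty but contained in $[b_0,0)$, where $b_0<0$ is the explicit upper bound for $\tauc(S)$ produced in Section~\ref{s:explicit-bound} and Lemma~\ref{ex:log-holom} via a non-zero logarithmic holomorphic $1$-form with values in $\mathcal{L}_{b_0}$. For such $S$, Theorem~\ref{main}(c) gives $\tauc(S)=(-\infty,b_0)$, so $B(S)\cap \tauc(S)=\emptyset$ and no bi-hermitian structure can exist; yet by construction there does exist $b\in(-\infty,0)$ with $H^0(S,K_S^{-1}\otimes \mathcal{L}_b)\neq 0$, exactly as claimed.

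The central computation is therefore a direct comparison, on a carefully chosen intermediate Kato surface, between the class $b_0$ determined by the logarithmic $1$-form and the classes $b\in B(S)$ supporting non-zero twisted anti-canonical sections. Both are numerical invariants of the maximal divisor $D$ (the cycle of rational curves together with its non-cyclic branches) and can be read off from the contracting germ description of intermediate Kato surfaces due to Dloussky. I would select an intermediate Kato surface whose branch data forces every $b\in B(S)\cap(-\infty,0)$ to satisfy $b\ge b_0$ strictly, by tuning the self-intersection sequence of the tree part so that the logarithmic $1$-form has a smaller (more negative) degree than any non-zero section of $K_S^{-1}\otimes \mathcal{L}_b$.

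The main obstacle is the explicit separation of the two classes. This is where the \emph{intermediate} (as opposed to Inoue--Hirzebruch) type is essential: the presence of non-cyclic branches provides enough freedom to produce a gap between $b_0$ and $B(S)$, whereas on an Inoue--Hirzebruch surface the anti-canonical section and the logarithmic $1$-form live in the same flat line bundle and no such separation is possible. A secondary, easier point is to handle the degenerate case $J_+=\pm J_-$, which reduces to the LCK setting and is already ruled out by $\tauc(S)\subset (-\infty,b_0)$ applied to the Lee class of the LCK form.
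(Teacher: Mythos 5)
Your overall architecture is the same as the paper's: a bi-hermitian structure forces the flat twist of the non-zero anticanonical section to coincide with the Lee class of a taming LCS form (the paper gets this from \cite[Propositions~2.3 and 2.4]{ABD} rather than from your formula $g(J_+\cdot,\cdot)-g(J_-\cdot,\cdot)$, which degenerates exactly on the locus $J_+=J_-$ and so does not literally tame $J_+$), and the explicit upper bound $b_0$ of Section~\ref{s:explicit-bound} and Lemma~\ref{ex:log-holom} then excludes that class from $\tauc(S)$ whenever the anticanonical twist lies in $[b_0,0)$. Note also that you only need the inclusion $\tauc(S)\subseteq(-\infty,b_0)$, which is what the paper actually proves; the equality $\tauc(S)=(-\infty,b_0)$ you attribute to Theorem~\ref{main}(c) is not established there and is explicitly left open.

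The one genuine flaw is in your plan for what you correctly identify as the central computation. You propose to force $B(S)\cap(-\infty,0)\subseteq[b_0,0)$ by ``tuning the self-intersection sequence of the tree part'', on the premise that both $b_0$ and the anticanonical twist are numerical invariants of the maximal divisor. That is true for $b_0$: by Example~\ref{ex:Kato-intermediate} it corresponds to $\cL_k$, and $k$ is combinatorial. It is false for the anticanonical twist: by \cite{Dl} the unique flat bundle with $H^0(S,K_S^*\otimes\cL_\mu)\neq0$ is $\cL_\mu$ with $\mu=\lambda_1 k$, where $\lambda_1\in\C^*$ is the leading coefficient of Favre's contracting germ \eqref{germ-fabre} --- a continuous analytic modulus not determined by the dual graph. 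For a fixed combinatorial type (satisfying $s/(k-1)\in\N$, which is what guarantees an anticanonical section exists at all), $\mu=\lambda_1k$ sweeps out all of $\C^*$ as $\lambda_1$ varies, so no choice of self-intersection data alone can produce the separation. The fix, which is what the paper does, is to tune $\lambda_1$ itself: choosing $\lambda_1$ real with $1/k<\lambda_1\le 1$ (possible since $k\ge 2$) gives $\mu$ real with $1<\mu\le k$, hence $H^0(S,K_S^{-1}\otimes\cL_b)\neq0$ for a real class $b\in[b_0,0)$, while any bi-hermitian structure would require $\mu>k$, i.e. $\lambda_1>1$.
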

Our explicit construction on the hyperbolic Kato surfaces leads us to investigate in the final Section~\ref{s:TLF}  of the paper the general problem of   existence of logarithmic $1$-forms with values in a flat line bundle.  We recall that any such form determines a holomorphic foliation $\cal F$ on $S$ and, the \emph{characteristic numbers} of a non-degenerate singularity $p$ of $\cal F$ are given by the two ratios of eigenvalues of the linear part of any local holomorphic vector field generating $\cal F$ around $p$ and vanishing at $p$. We prove the following  result of independent interest.
\begin{thm}\label{thm:logarithmic-forms} Suppose $S$ a {minimal} compact complex surface in the Kodaira class {\rm VII},  which admits a non-zero holomorphic logarithmic $1$-form $\theta$  with values in a flat holomorphic line bundle $\cL_b$  obtained as the complexification of a real flat line bundle corresponding to a deRham class $b\in H^1_{dR}(S, \R)$. Suppose, furthermore, that ${\rm deg}_g (\cal L_b) \le0$ with respect to some (and hence any) Gauduchon metric $g$ on $S$, and that the holomorphic foliation $\cal F$ defined by $\theta$ has real negative characteristic  numbers at any non-degenerate singular point situated on the pole of $\theta$. Then $b$ bounds $\tauc(S)$ and one of the following must hold:
\begin{enumerate}
\item[\rm (a)] $b=0$ and $\tauc(S) = (-\infty, 0)$. In this case  $S$ is either a Hopf surface or an Enoki surface.
\item[\rm (b)] $b<0$ and $\tauc(S) \subseteq (-\infty, b)$. In this case $S$ is a hyperbolic Kato surface.
\item[\rm (c)] $b<0$ and $\tauc(S) =\{b\}$. In this case $S$ is an Inoue--Bombieri surface.
\item[\rm (d)] $b<0$ and $\tauc(S) \subseteq (b, 0)$.  
\end{enumerate}
\end{thm}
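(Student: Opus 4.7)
The plan is to translate the logarithmic $1$-form $\theta$ into a strictly plurisubharmonic function $\phi$ on the cyclic cover $\hat S$ with prescribed multiplicative automorphy, apply Proposition~\ref{p:main} to extract the bound on $\tauc(S)$, and then combine Theorems~\ref{thm:tau-structure} and~\ref{main} with Brunella's classification~\cite{Brunella13,Brunella14} of class {\rm VII} surfaces carrying a PSH function with analytic singularities on the cyclic cover in order to match the situation with one of the four conclusions.

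First I would pull back $\theta$ to a logarithmic holomorphic $1$-form $\hat\theta$ on $\hat S$: by Lemma~\ref{l:cover}, $H^1(\hat S,\R)=0$, so $\cL_b$ is holomorphically trivial on $\hat S$ and $\hat\theta$ satisfies $\gamma^*\hat\theta=C\,\hat\theta$ for the cyclic generator $\gamma$, with $C>0$ determined by $b$; the hypothesis $\deg_g(\cL_b)\le 0$ forces $C\ge 1$, with equality iff $b=0$. The closed form $\hat\theta$ admits a global holomorphic primitive $\hat F$ off its pole divisor $\hat D$. Near a non-degenerate singular point of $\cal F$ on $\hat D$, the real-negative-characteristic-numbers hypothesis furnishes local coordinates $z_1,z_2$ in which
\[
\hat\theta=\lambda_1\,\frac{dz_1}{z_1}+\lambda_2\,\frac{dz_2}{z_2}+(\text{holomorphic}),\qquad \lambda_1/\lambda_2\in\R_{<0},
\]
so that a multiplicative first integral $h=z_1^{\lambda_1}z_2^{\lambda_2}\cdot(\text{unit})$ of $\cal F$ exists and real powers $|h|^{2s}$ are strictly PSH.

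The analytic core of the proof is to patch these local models $|h|^{2s}$ with exponentials of $\hat F$ into a global strictly PSH function $\phi$ on $\hat S$, with analytic singularities supported on $\hat D$ and satisfying $\gamma^*\phi=C^{2s}\phi$ for a real exponent $s$ chosen so that $C^{2s}>1$. The sign of $\phi$ then selects the conclusion. If $\phi$ can be arranged to be negative, Proposition~\ref{p:main}(a) yields $\tauc(S)\subset(-\infty,b)$; feeding the analytic singularities of $\phi$ into Brunella's theorem, and combining with Theorems~\ref{thm:tau-structure} and~\ref{main}, places $S$ among the known minimal class {\rm VII} surfaces, which corresponds to case (c) when $\tauc(S)=\{b\}$ (Inoue--Bombieri), case (b) when $\tauc(S)\subsetneq(-\infty,b)$ with $b<0$ (hyperbolic Kato), and case (a) when $b=0$ (Hopf or Enoki). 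If instead $\phi$ is non-negative, Proposition~\ref{p:main}(b) yields $\tauc(S)\subset(b,0)$ and one is in case (d).

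The main obstacle is precisely this construction of $\phi$: assembling the local PSH models dictated by the characteristic-number hypothesis, the exponential of the global holomorphic primitive $\hat F$, and the automorphy encoded by $C\ge 1$, into a single strictly PSH function on $\hat S$ with analytic singularities along $\hat D$ and the required multiplicative transformation law. Controlling the sign of $\phi$, which depends subtly on the interplay between the residues $\lambda_j$ of $\hat\theta$ along $\hat D$ and the constant of automorphy $C$, is what distinguishes cases (a)--(c) from case (d), and is where the degree assumption $\deg_g(\cL_b)\le 0$ must enter essentially.
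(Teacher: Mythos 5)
Your overall strategy (turn $\theta$ into an automorphic PSH object, invoke Proposition~\ref{p:main}, then Brunella) is the right shape, but the step you yourself flag as ``the main obstacle'' is exactly the content of the theorem, and the ingredients you propose for it do not work as stated. First, a global single-valued holomorphic primitive $\hat F$ of $\hat\theta$ on $\hat S\setminus\hat D$ does not exist in general: although $H^1(\hat S,\R)=0$, the complement of the pole divisor has nontrivial periods, namely the residues of $\hat\theta$ around the components of $\hat D$, which are nonzero by hypothesis. Second, your local model is not PSH: with $\lambda_1/\lambda_2<0$ one of the exponents in $|h|^{2s}=|z_1|^{2s\lambda_1}|z_2|^{2s\lambda_2}$ is negative, so the function is unbounded above along one branch of $\hat D$ (and even where defined, $e^{2s\log|h|}$ is the exponential of a pluriharmonic function, hence has complex Hessian of rank at most one, not strictly PSH). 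Third, the sign of the putative $\phi$, which is what separates cases (a)--(c) from (d), is never derived from the hypotheses; it is only asserted to ``select the conclusion.''

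The paper avoids all of this by never leaving $S$: Lemma~\ref{singularity-foliation} uses the negative real characteristic numbers to normalize $\theta$ so that all residues $\Res_\beta(\theta)$ are real and positive, and Lemma~\ref{exact-current} then shows that $\Re(P_\theta)$ (the $(1,2)$-current of integration against $\theta$) is $d_\beta$-closed and, by the twisted-cohomology vanishing of \cite[Theorem 1.5]{lcs1} on $(S^1\times S^3)\sharp k\overline{\C P^2}$, is $d_\beta$-exact, $\Re(P_\theta)=d_\beta\tau$. This produces the degree-zero current $\tau$ with $d_\beta d^c_\beta\tau=2\pi\Res_\beta(\theta)T_D>0$ directly, without ever constructing a PSH function by patching; the key vanishing $\partial_\beta P_\theta=0$ rests on $H^0(S,K_S\otimes\cL)=0$. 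From there, $b\notin\tauc(S)$ (Propositions~\ref{p:conformal-dual} and \ref{p:lcs-characterization}), connectedness of $\tauc(S)$ (Theorem~\ref{thm:tau-structure}) gives the dichotomy between an upper bound and $\tauc(S)\subset(b,0)$, and in the upper-bound case Proposition~\ref{p:sign} forces $\tau<0$ so that Theorem~\ref{thm:brunella} identifies $S$ as hyperbolic Kato. The cases $b=0$ and $b_2(S)=0$ are dispatched separately by Lemmas~\ref{trivialtwisting}, \ref{Hopf mu>0} and \ref{I-B1-forme}, which your proposal also omits. To repair your argument you would essentially have to reprove Lemma~\ref{exact-current} in PSH language, which is harder, not easier, than the current-theoretic route.
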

All known minimal complex surface in the Kodaira class {\rm VII}, except possibly some secondary Hopf surfaces,  do appear in the cases (a)-(c) of the above statement (see Remark~\ref{r:positive-type} below) whereas the case (d) should not occur if Conjecture A-(b) holds true. We know of no example $(S, \theta)$ such that $\theta$ is a non-zero holomorphic logarithmic $1$-form $\theta$  with values in a flat holomorphic line bundle $\cL_b$  with ${\rm deg}_g (\cal L_b) \le 0$,   and for which the characteristic numbers of $\cal F$ on the pole are not all negative. However, positive characteristic numbers  do occur on some  Inoue--Hirzebruch  surfaces~\cite{DO} if we drop the assumption  ${\rm deg}_g (\cal L_b)\le 0$ for $\cal L_b$. 
It is interesting to know whether or not in the case (b) we have equality $\tauc(S)=(-\infty, b)$.

\smallskip The article is structured as follows. In Section~\ref{s:twisted-differentials},  we introduce differential forms and currents with values in flat real line bundles, the corresponding twisted differentials,   twisted deRham and Dolbeault cohomology groups, and the duality between them. Section~\ref{s:key} specializes  to  non-K\"ahler compact complex surfaces, and is the main technical body of the paper. Here we recast the basic constructions from \cite{lcs} which we use in this paper, and establish in Proposition~\ref{p:conformal-dual} a Harvey--Lawson's type criterion for the existence of a taming LCS structure with a given Lee form. This is then  enhanced, via linear elliptic theory, to sharper statements (Propositions~\ref{p:obstruction} and \ref{p:sign}) which express the bounds for the set $\tauc(S)$ in terms of the existence of definite currents of degree zero and with positive twisted $i\partial \bar \partial$-image. These lead to the proof of the connectedness of $\tauc(S)$ (see Theorem~\ref{thm:tau-connected}) which, together with some previous results from \cite{lcs,lcs1}, yield the proof of Theorem~\ref{thm:tau-structure} from the Introduction. In the next Section~\ref{s:examples}, we recall the classification of the known minimal complex surfaces in the class VII,  and examine on each class of them the existence of definite degree zero currents with positive twisted $i\partial \bar \partial$-image,  leading to the proof of Theorem~\ref{main} and its  corollaries, Corollary~\ref{c:Cheeger-Gromov} and  Theorem~\ref{thm:main-BH}. Section~\ref{s:conjecture} recasts some known results,  due to Brunella~\cite{Brunella13,Brunella14} and \cite{lcs1},  in the setup of this article. They further motivate Conjecture~A from the introduction,  and are used in the next Section~\ref{s:logarithmic} in which we review the theory of  holomorphic logarithmic  $1$-formes with values in a flat bundle and prove Theorem~\ref{thm:logarithmic-forms} from the Introduction.

\section{Twisted differentials and currents}\label{s:twisted-differentials}

Let  $X=(M,J)$ be a compact complex manifold of complex dimension $n$, $\alpha$ a closed $1$-form  on $M$, representing a de Rham class $a:=[\alpha] \in H^1_{dR}(M, \R)$. We denote by $L_{\alpha}=M\times \R$ the topologically trivial real line bundle over $M$,  endowed with the flat connection $\nabla^{\alpha} = d + \alpha$, and by $\cal L_{\alpha}= L_{\alpha}\otimes \C$ the corresponding topologically trivial flat holomorphic line bundle.  This construction fits in the sequence 
\begin{equation}\label{H1}
H^1_{dR}(M,\bb R)\stackrel{\exp}{\hookrightarrow} H^1_{dR}(M, \bb R^*_+) \longrightarrow  H_0^1(M,\bb C^*) \longrightarrow {\Pic}^0(X), 
\end{equation}
where $H_0^1(M, \C^*)$ and $\Pic^0(X)$ respectively denote the spaces of equivalent classes of flat and holomorphic \emph{topologically trivial} complex line bundles.  We shall slightly abusively denote by $L=L_{a}$ the isomorphism class of topologically trivial flat real bundles corresponding to $a\in H^1_{dR}(M, \R)$,  and  by
$L^*= L_{-a}$ its dual. The corresponding flat holomorphic line bundles over $(M,J)$ will be denoted by $\cal L = L_a \otimes \C$ and $\mathcal{L}^*= \cal L_{-a}$, respectively.

We denote by  $\mathcal{E}^k(M,\R)$ (resp. $\mathcal{E}^{p,q}(X, \C)$) the sheaf of  smooth real $k$-forms on $M$ (resp. of  complex-valued $(p,q)$-forms on $X$).

For any $\alpha \in a$, the differential operator  $d_{\alpha}:=d-\alpha\wedge \cdot$  on $\cal E^{k-1}(M, \R)$  corresponds to the induced differential  on $\cal E^k(M, L^*)$ via the flat connection $\nabla^{-\alpha} := d -\alpha$ on $L^*=L_{-a}$. Furthermore, $d_{\alpha}$ defines the Lichnerowicz--Novikov complex
\begin{equation}\label{lichne-novikov}
\cdots \stackrel{d_{\alpha}}{\to}\cal E^{k-1}(M, \R) \stackrel{d_{\alpha}}{\to} \cal E^{k}(M, \R)  \stackrel{d_{\alpha}}{\to}\cdots 
\end{equation}
which is  isomorphic to the de Rham complex of differential forms with values in $L^*$
\begin{equation}\label{twisted-deRham}
\cdots \stackrel{d_{L^*}}{\to}\cal E^{k-1}(M,L^*) \stackrel{d_{L^*}}{\to} \cal E^{k}(M, L^*)  \stackrel{d_{L^*}}{\to}\cdots.
\end{equation}
This isomorphism can be made explicit by writing $\alpha_{\mid U_i}=df_i$ on an open covering  $\mathfrak {U}=(U_i)$ of $M$: then, for any  $d_{\alpha}$-closed smooth form $\omega$ on $M$,   ${\omega_i}_{\mid U_i} := e^{-f_i}\omega$ gives rise to a $d_{L^*}$-closed form with values in $L^*$.

In particular,  we have an isomorphism of  cohomology groups
$$H^{k}_\a(M, \R)\simeq H^k_{d_{L^*}}(M,L^*).$$
For the Dolbeault cohomology groups of $X$ with values in the flat holomorphic line bundle ${\mathcal L}^*$, we have  similarly
$$d_{L^*}=\part _{\mathcal {L^*}}+\bpart_{\mathcal {L}^*} \quad {\rm and}\quad d_{\alpha}=\part_{\alpha}+\bpart_{\alpha},$$
 where 
 $$\part_{\alpha}=\part-\alpha^{1,0}\w\quad {\rm and}\quad \bpart_{\alpha}=\bpart-\alpha^{0,1}\w,$$
thus giving rise to the isomorphisms 
\begin{equation}\label{dolbeault}
H^{p,q}_{\bar \partial_{\alpha}}(X, \C)\simeq H_{\bar\partial_{\cal L^*}}^{p,q}(X,\cal L^*).
\end{equation}
We can further consider the Aeppli  and  the Bott--Chern cohomology groups of $(p,q)$-forms with values in $\mathcal{L}^*$,  respectively denoted $H^{p,q}_A(X, \cal L^*)$ and $H^{p,q}_{BC}(X, \cal L^*)$, also seen as the spaces of $\partial_{\a}\bar\partial_{\a}$-closed $(p,q)$-forms modulo ${\rm im}(\partial_{\a}) \oplus {\rm im}(\bar \partial _{\a})$ (resp. $d_{\a}$-closed $(p,q)$-forms modulo $\partial_{\a}\bar\partial_{\a}$-exact ones). The Hodge duality theory (see e.g. \cite[Corollary~1.5]{AK}) yields in this case
\begin{lem}\label{l:duality} Let $X=(M,J)$ be a compact complex manifold of complex dimension $n$ and ${L}$ a real flat line bundle. Then, the natural pairing through integration of wedge products of $2$-forms  with values in $L$ and $L^*$ respectively, gives rise to  canonical  isomorphisms 
$$\Big(H_{d_L}^{k}(M, {L})\Big)^* \cong H_{d_{L^*}}^{2n-k}(M, {L}^*), \ \ \Big(H^{p,q}_{BC}(X, \cal L)\Big)^* \cong H^{n-p,n-q}_{A}(X, \cal L^*),$$
where the upper  $*$ denotes the dual vector space.
\end{lem}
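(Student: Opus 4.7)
The plan is to construct both isomorphisms explicitly via the integration pairing $(\omega, \eta) \mapsto \int_M \omega \wedge \eta$ on forms with values in dual flat bundles, and to verify non-degeneracy by twisted Hodge theory. Since $L \otimes L^* \cong M \times \R$ canonically, the wedge product of an $L$-valued $k$-form with an $L^*$-valued $(2n-k)$-form is an ordinary top-degree form on $M$. Working in the Lichnerowicz--Novikov picture \eqref{lichne-novikov}, where $d_L$ corresponds to $d_{\alpha}$ and $d_{L^*}$ to $d_{-\alpha}$ for a chosen representative $\alpha \in a$, a direct computation yields the Leibniz rule
\[ d(\omega \wedge \eta) = d_L \omega \wedge \eta + (-1)^k \omega \wedge d_{L^*}\eta, \]
so Stokes' theorem shows the pairing descends to a well-defined bilinear map $H^k_{d_L}(M, L) \times H^{2n-k}_{d_{L^*}}(M, L^*) \to \R$.

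To establish non-degeneracy of the first pairing, I would fix a Riemannian metric $g$ on $M$ and the canonical flat fibre metric on $L$ (inherited from the trivial underlying bundle), which induce a Hodge star $\ast_L : \mathcal{E}^k(M, L) \to \mathcal{E}^{2n-k}(M, L^*)$ and an elliptic, formally self-adjoint twisted Laplacian $\Delta_L = d_L d_L^{\ast} + d_L^{\ast} d_L$. Standard elliptic theory yields the Hodge decomposition identifying $H^k_{d_L}(M, L)$ with the finite-dimensional space $\mathcal{H}^k_{d_L}(M, L)$ of $\Delta_L$-harmonic forms; the Hodge star intertwines $\mathcal{H}^k_{d_L}(M, L)$ with $\mathcal{H}^{2n-k}_{d_{L^*}}(M, L^*)$, and on such representatives the integration pairing reduces, up to sign, to the $L^2$-inner product. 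This gives the first isomorphism.

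For the Bott--Chern/Aeppli case, the analogous bilinear map restricted to $(p,q)$- and $(n-p,n-q)$-forms with values in $\mathcal{L}$ and $\mathcal{L}^*$ respectively is again well-defined. Indeed, if $\omega$ represents a class in $H^{p,q}_{BC}(X, \mathcal{L})$ (so $d_{\alpha} \omega = 0$ and $\omega$ is of pure type), and $\eta = \partial_{-\alpha}\mu + \bar\partial_{-\alpha}\nu$, then bidegree considerations together with the Leibniz rules for $\partial_{\alpha}$ and $\bar\partial_{\alpha}$ reduce $\int_M \omega \wedge \eta$ to the integral of a $d$-exact form, which vanishes by Stokes; symmetrically, $\partial_{\alpha}\bar\partial_{\alpha}$-exact $(p,q)$-forms pair trivially with $\partial_{-\alpha}\bar\partial_{-\alpha}$-closed $(n-p,n-q)$-forms. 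Hence the pairing descends to $H^{p,q}_{BC}(X, \mathcal{L}) \times H^{n-p, n-q}_A(X, \mathcal{L}^*) \to \C$. Non-degeneracy then follows by introducing the twisted analogue of Schweitzer's fourth-order elliptic Bott--Chern Laplacian, whose kernel provides harmonic representatives of $H^{p,q}_{BC}(X, \mathcal{L})$ and whose Hodge-star image gives harmonic representatives of $H^{n-p,n-q}_A(X, \mathcal{L}^*)$; this is precisely the content of \cite[Corollary~1.5]{AK} transcribed to the flat line bundle setting.

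The main obstacle I anticipate is purely bookkeeping: one must carefully track the sign conventions and the Leibniz rules for the twisted operators $\partial_{\alpha}, \bar\partial_{\alpha}$ and their formal adjoints so that the Schweitzer-type fourth-order operator has the correct symbol and its harmonic space pairs nondegenerately under $\ast$ with the Aeppli harmonic space. All the analytic ingredients (ellipticity, finite-dimensionality, Hodge decomposition) carry over verbatim from the untwisted case because the twist is by a flat line bundle equipped with a parallel fibre metric, so the argument is conceptually standard once the setup is fixed.
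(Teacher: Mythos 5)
Your argument is correct and is essentially the proof the paper delegates to \cite[Corollary~1.5]{AK}: the integration pairing descends to cohomology by the twisted Leibniz rule \eqref{star} together with Stokes' theorem, and non-degeneracy follows from twisted Hodge theory (harmonic representatives and the Hodge star intertwining $\mathcal{H}^k_{d_L}$ with $\mathcal{H}^{2n-k}_{d_{L^*}}$ for the de Rham pairing, and the Schweitzer-type fourth-order elliptic Laplacian for the Bott--Chern/Aeppli pairing). One small correction: when $a\neq 0$ the fibre metric inherited from the trivialization $L=M\times\R$ is \emph{not} parallel for $\nabla^{\alpha}=d+\alpha$ (a parallel metric exists iff $\alpha$ is exact), but nothing in your argument actually depends on parallelism --- ellipticity, finite-dimensionality, the Hodge decomposition, and the identity $d_L^{*}=\pm *_{L^{*}}\, d_{L^{*}}\, *_{L}$ all hold for an arbitrary choice of fibre metric, since the Leibniz rule pairing $L$- and $L^{*}$-valued forms is metric-independent.
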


For any open subset $U\subset M$, we denote by $\mathcal{E}_c^k(U, \R)$ and $\mathcal{E}_c^{p,q}(U, \C)$ the respective spaces of smooth forms with compact support in $U$. The space $\mathcal{D}'_k(U, \R)$ of currents of dimension $k$ (and degree $2n-k$) is the dual topological space of $\mathcal{E}^k_c(U)$. Similarly, the space $\mathcal{D}'_{p,q}(U, \C)$ of currents of bi-dimension $(p,q)$ (and  bi-degree $(n-p, n-q)$) is the dual topological space   of $\mathcal{E}_c^{p,q}(U, \C)$.  We shall use the standard notation  $\mathcal{D}'^{r}(U, \R) := \mathcal{D}'_{2n-r}(U, \R)$ and $\mathcal{D}'^{r,s}(U, \C):= {\mathcal D}'_{n-r, n-s}(U, \C)$ and  denote by $j$ the natural injections $j : \mathcal{E}^k(M, \R) \hookrightarrow \mathcal{D}'^k(M, \R)$ and $j: \mathcal{E}^{p,q}(X, \C) \hookrightarrow \mathcal{D}'^{p,q}(X, \C)$ obtained by the wedge product and integration.

\smallskip

The action of the operator $d_{\alpha}: \mathcal{D}'_{k} (M, \R)\mapsto \mathcal{D}'_{k-1}(M, \R)$ on currents is defined by
\begin{equation}\label{d-current-1}
(d_{\alpha} T)(\phi) := (-1)^{k-1} T(d_{\alpha}\phi), \ \forall \phi \in \mathcal{E}^{k-1}(M, \R),
\end{equation}
and similarly for the operators 
$$\partial_{\alpha} : \mathcal{D}'_{p,q}(X, \C) \to \mathcal{D}'_{p-1, q}(X, \C), \ \ \bar \partial_{\alpha} : \mathcal{D}'_{p,q}(X, \C) \to \mathcal{D}'_{p, q-1}(X, \C).$$
Notice that for any $(k-1)$-form $\phi$ and $(2n-k)$-form $\psi$ on $M$, we have
\begin{equation}\label{star}
d (\phi \wedge \psi) = (d_{\alpha} \phi) \wedge \psi  + (-1)^{k-1} \phi \wedge d_{-\alpha}\psi.
\end{equation}
Thus,  the sign convention in \eqref{d-current-1} is such that for any smooth $(2n-k)$ form $\psi$ on $M$,  giving rise to a current $T_{\psi}:= j(\psi) \in \mathcal{D}'_k(M, \R)$, we have  
\begin{equation}\label{by-parts}
d_{\alpha} T_{\psi} =  T_{(d_{-\alpha} \psi)}, \ \  d_{-\alpha} T_{\psi} =-T_{(d_{\alpha} \psi)}.
\end{equation}
 Let $H^{\alpha}_{k}(M, \R)$, $H^{BC,\alpha}_{p,q}(X, \C)$, $H^{A,\a}_{p,q}(X, \C)$ be the corresponding homology groups in terms of currents,  defined respectively as $d_{\a}$-closed modulo $d_{\a}$-exact currents of degree $2n-k$, $d_{\a}$-closed modulo $\partial_{\a}\bar\partial_{\a}$-exact currents of bi-degree $(n-p,n-q)$, and $\partial_{\a}\bar\partial_{\a}$-closed currents of bi-degree $(n-p,n-q)$ modulo ${\rm im}(\partial_{\a}) \oplus {\rm im}(\bar \partial _{\a})$. The injection $j$ defined above gives rise to linear isomorphisms  
\[
\begin{split}
H^{2n-k}_{-\alpha}(M, \R)  &\cong H^{\alpha}_k(M, \R), \\
 H^{n-p, n-q}_{BC,-\alpha}(X, \C)  &\cong H^{BC,\alpha}_{p,q}(X, \C), \\
   H^{n-p, n-q}_{A,-\alpha}(X, \C) & \cong H^{A,\alpha}_{p,q}(X, \C).
   \end{split}
   \]
Combined with Lemma~\ref{l:duality}, we have canonical isomorphisms
\[
\begin{split}
H^{\alpha}_k(M, \R) &\cong (H^k_{\alpha}(M, \R))^*, \\
H^{BC,\a}_{p,q}(X, \C) &\cong (H^{p,q}_{A, \a}(X, \C))^*, \\
 H^{A,\a}_{p,q}(X, \C)&\cong (H^{p,q}_{BC, \a}(X, \C))^*.
 \end{split}
 \]

\smallskip
We use  the usual notions of  {\it positivity} of $T\in \cal D'_{p,p}(U, \R)$, defined as follows
\begin{defn} $T \in \cal D'_{p,p}(U, \R)$  is a {\it weakly positive} (or \emph{positive}) $(p,p)$-current ($T \ge 0$) if for each $\xi_1, \ldots \xi_p \in \mathcal{E}_c^{1,0}(U)$, 
$T( i\xi_1\wedge \bar \xi_1\wedge \cdots \wedge i\xi_p \wedge \bar \xi_p)\ge 0;$  the  current $T$ is called {\it strongly positive}  if $T - j(\psi^{n-p}) $ is a positive $(p,p)$-current, where $\psi$ is a positive-definite smooth $(1,1)$-form on $X$.
\end{defn}

\section{Twisted exact currents and LCS structures on complex surfaces}\label{s:key}
We now specialize to the case when $X=S$ is a compact complex surface with odd first Betti number $b_1(S)$, in which case we have  the following result: 

\begin{thm}\label{thm:conformal}\cite{lcs} Let $S=(M,J)$ be a compact complex surface with odd first Betti number,  and $\mathsf{c}$ a conformal class of hermitian metrics on $S$. Then, there exists a non-zero de Rham class $a \in H^1_{dR}(M, \R)$ such that for any metric $g \in \mathsf{c}$, there exists a representative $\alpha \in a$ such that the fundamental $2$-form $F$ of $g$ satisfies
\begin{equation}\label{twisted-pluriclosed}
d_{\alpha} d^{c}_{\alpha} F =0,
\end{equation}
where $d_{\alpha}^c: = J d_\alpha J^{-1}$.\end{thm}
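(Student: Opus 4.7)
The plan is to view the equation $d_\alpha d^c_\alpha F = 0$ as a twisted version of Gauduchon's classical theorem. I would start from a Gauduchon representative $g_0 \in \mathsf c$, whose fundamental form $F_0$ satisfies $dd^c F_0 = 0$, and seek a pair $(u, \alpha_0)$ with $u>0$ a positive function and $\alpha_0$ a closed $1$-form in a to-be-determined deRham class $a$, such that $d_{\alpha_0} d^c_{\alpha_0}(u F_0) = 0$. By the conformal gauge covariance
\[
d_{\alpha_0}(e^{f}\omega) = e^{f} d_{\alpha_0 - df}\omega, \qquad d^c_{\alpha_0}(e^{f}\omega) = e^{f} d^c_{\alpha_0 - df}\omega,
\]
setting $u = e^{f}$ and $\alpha = \alpha_0 - df$, such a solution will yield a representative $\alpha \in a$ satisfying $d_\alpha d^c_\alpha F = 0$ for $F = u F_0$. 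The same gauge covariance shows that for any other metric $g \in \mathsf c$, a representative in $a$ solving the equation for $g$ is obtained by a similar gauge transformation; in particular, the class $a$ depends only on $\mathsf c$.

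The next step is to expand via Leibniz:
\[
L_{\alpha_0}(u) := d_{\alpha_0} d^c_{\alpha_0}(u F_0) = dd^c u \wedge F_0 + du \wedge d^c_{\alpha_0} F_0 - d^c u \wedge d_{\alpha_0} F_0 + u\, d_{\alpha_0} d^c_{\alpha_0} F_0,
\]
which reveals $L_{\alpha_0}$ as a linear second-order elliptic operator on $C^\infty(M,\R)$, with principal part $dd^c u \wedge F_0 \propto (\Delta^c u) F_0^2$. I will then invoke the Krein-Rutman / Perron-Frobenius theorem for second-order elliptic operators on compact manifolds: $L_{\alpha_0}$ admits a real principal eigenvalue $\lambda(\alpha_0)$ with a strictly positive eigenfunction, and the conjugation identity $L_{\alpha_0 - df}(v) = e^{-f} L_{\alpha_0}(e^{f} v)$ (a direct consequence of the gauge covariance above) implies that $\lambda$ descends to a well-defined function $\lambda : H^1_{dR}(M,\R) \to \R$. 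The equation $L_{\alpha_0}(u) = 0$ then admits a positive solution precisely when $\lambda(a) = 0$.

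The hardest step will be to exhibit a \emph{non-zero} class $a \in H^1_{dR}(M,\R)$ for which $\lambda(a) = 0$. Gauduchon's theorem already gives $\lambda(0) = 0$ (with constant eigenfunction), so I need additional zeros of $\lambda$. My plan is to use a Hahn-Banach duality argument in the style of Harvey-Lawson \cite{HL}: by the dualities established in Section~\ref{s:twisted-differentials} relating the twisted Bott-Chern cohomology $H^{p,q}_{BC}(S,\cL)$ to twisted Aeppli currents, $\lambda(a) = 0$ translates into the non-existence of certain positive $(1,1)$-currents representing a specific obstruction class. The hypothesis $b_1(S)$ odd forces $S$ to be non-K\"ahler (cf.~\cite{gauduchon,lamari,buchdahl}), and this non-K\"ahlerness should kill the relevant obstruction on an affine subspace of $H^1_{dR}(M,\R)$ of positive dimension passing through $0$; any non-zero class $a$ from this subspace would then yield the desired representative. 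The delicate point is the precise analysis of these obstructing currents and the exploitation of the non-K\"ahlerness of $S$ to rule them out, together with the continuous dependence of the positive eigenfunction on $a$, which follows from analytic Fredholm theory.
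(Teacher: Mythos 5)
This theorem is stated with a citation to \cite{lcs} and the present paper contains no proof of it, so the relevant comparison is with the argument of \cite{lcs}. Your first four steps reproduce that argument faithfully: reduction to the Gauduchon representative $F_0$ via the conformal gauge covariance, the Leibniz expansion of $d_{\alpha_0}d^c_{\alpha_0}(uF_0)$ into a linear second--order elliptic operator $L_{\alpha_0}$ acting on $u$, the Krein--Rutman principal eigenvalue, and the conjugation identity showing that $\lambda$ descends to $H^1_{dR}(M,\R)$ with $\lambda(a)=0$ equivalent to solvability in the class $a$. Up to here the proposal is correct and is essentially the known proof.

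The gap is the final step, which is the actual content of the theorem, and the mechanism you propose for it cannot work as stated. Along a line $t\mapsto t[\gamma]$ the zeroth--order coefficient of $L_{t\gamma}$ (after the exponential substitution) is $-t\,\delta^{g_0}\gamma - t^2|\gamma|^2_{g_0} + t\langle\theta_{g_0},\gamma\rangle_{g_0}$, which is concave in $t$ and tends to $-\infty$; accordingly $t\mapsto\lambda(t)$ is concave and its zero set on each line is (generically) at most two points. So the zero locus of $\lambda$ is \emph{not} an affine subspace of positive dimension through $0$; indeed for a class VII surface $H^1_{dR}(S,\R)\cong\R$ and Proposition~\ref{p:tau} forces the non-zero zeros of $\lambda$ to lie in $(-\infty,0)$, which is incompatible with a punctured line of zeros through the origin. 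Moreover, the Harvey--Lawson/Hahn--Banach duality of Proposition~\ref{p:conformal-dual} characterizes solvability of $d_\alpha d^c_\alpha F=0$ with $F$ ranging over \emph{all} positive $(1,1)$-forms, not over the fixed conformal class $\mathsf c$, so it does not detect the vanishing of your constrained eigenvalue $\lambda(a)$; and ``non-K\"ahlerness kills the obstruction'' is precisely the assertion that needs proof. What is actually required (and what \cite{lcs} does) is: take $\gamma=\theta_h$, the harmonic part of the Lee form of the Gauduchon metric, and use that $b_1(S)$ odd forces $\theta_h\neq 0$; since $g_0$ is Gauduchon both $L_0$ and its formal adjoint annihilate constants, so $\lambda'(0)$ equals the average of $\partial_t c_t|_{t=0}$, namely $\|\theta_h\|^2_{L^2}/{\rm Vol}\neq 0$; combined with $\lambda(0)=0$, concavity, and the fact that $\lambda(t)\to-\infty$ as $|t|\to\infty$ (driven by the $-t^2|\theta_h|^2$ term), the intermediate value theorem yields a second zero $t_0\neq 0$ and hence the non-zero class $a=t_0[\theta_h]$. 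Without this computation the hypothesis that $b_1$ is odd is never used and no non-zero class is produced.
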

The following  can be obtained by a simple and standard modification of the arguments in \cite{HL,otiman}.
\begin{prop}\label{p:conformal-dual} The complex surface $S=(M,J)$ admits a $d_{\alpha}d^c_{\alpha}$-closed positive definite $2$-form $F$ if and only if $S$ admits no non-trivial positive current $T \ge 0$ which is $d_{\alpha}d^c_{\alpha}$-exact.
\end{prop}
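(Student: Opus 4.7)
My plan is to prove each implication by a separate duality argument, following the classical Harvey--Lawson scheme in \cite{HL} and its twisted refinement in \cite{otiman}. For the easy implication, I assume a $d_{\alpha}d^c_{\alpha}$-closed positive-definite $(1,1)$-form $F$ exists, and let $T=d_{\alpha}d^c_{\alpha}u$ be any positive $(1,1)$-current that is $d_{\alpha}d^c_{\alpha}$-exact in the sense of currents, for some distribution $u$ on $S$. Applying the sign convention \eqref{d-current-1} twice, together with the anti-commutation identity $d^c_{\alpha}d_{\alpha}=-d_{\alpha}d^c_{\alpha}$ on forms, I compute $T(F)=-u(d_{\alpha}d^c_{\alpha}F)=0$. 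On the other hand, since $T\ge 0$ in the weak sense and $F$ is pointwise strictly positive, $T\wedge F$ is a non-negative measure on $M$ whose total mass equals $T(F)=0$; this forces $T=0$.

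For the converse I argue by contraposition. In the Fr\'echet space $V=\mathcal{E}^{1,1}(S,\R)$ of smooth real $(1,1)$-forms on $S$ with its $C^{\infty}$-topology, I introduce the open convex cone $C\subset V$ of positive-definite forms and the closed linear subspace $W:=V\cap\ker(d_{\alpha}d^c_{\alpha})$, which is closed as the kernel of a continuous differential operator. The assumption that $S$ admits no $d_{\alpha}d^c_{\alpha}$-closed positive-definite $F$ means exactly $W\cap C=\emptyset$. Applying the geometric Hahn--Banach separation theorem to the closed subspace $W$ and the open convex cone $C$ produces a continuous linear functional $T:V\to\R$, i.e.\ a real $(1,1)$-current on $S$, such that $T|_{W}=0$ and $T|_{C}>0$; strict positivity on $C$ forces $T$ to be a non-trivial positive $(1,1)$-current.

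The main technical obstacle is to upgrade the vanishing $T|_{W}=0$ to genuine $d_{\alpha}d^c_{\alpha}$-exactness of $T$ in the sense of currents. My plan is to apply the closed range theorem to the continuous linear map $d_{\alpha}d^c_{\alpha}:\mathcal{E}^{1,1}(S,\R)\to\mathcal{E}^{2,2}(S,\R)$. Its image is closed as a consequence of the finite-dimensionality of the twisted Aeppli cohomology $H^{1,1}_{A,\alpha}(S)$, which follows from standard elliptic Hodge theory for the twisted operator $\partial_{\alpha}\bar\partial_{\alpha}$ combined with the Serre-type pairing of Lemma~\ref{l:duality}. Once closed range is in hand, the annihilator of $W$ in $V^{*}$ coincides with the image of the transpose of $d_{\alpha}d^c_{\alpha}$, which by the sign convention \eqref{d-current-1} and the identifications \eqref{by-parts} is precisely $d_{\alpha}d^c_{\alpha}$ acting on $0$-currents (distributions) on $S$. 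Hence $T=d_{\alpha}d^c_{\alpha}u$ for some distribution $u$ on $S$, yielding the desired non-trivial positive $d_{\alpha}d^c_{\alpha}$-exact current and completing the contrapositive.
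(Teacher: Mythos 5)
Your proof is correct, and it is essentially the mirror image of the one in the paper: both rest on the Harvey--Lawson combination of Hahn--Banach separation with the closed range theorem, but the separation is carried out on opposite sides of the duality between $\mathcal{E}^{1,1}(S,\R)$ and $\mathcal{D}'_{1,1}(S,\R)$. The paper argues the converse directly: it shows that the set $\mathcal C$ of $d_{\alpha}d^c_{\alpha}$-exact currents is closed in $\mathcal{D}'_{1,1}(S,\R)$ (this is where closed range and the finite dimensionality of the twisted Bott--Chern group enter), separates $\mathcal C$ from the weakly compact convex set of positive currents normalized by $T(\psi)=1$, and reads off the taming form $F$ as the separating functional. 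You argue by contraposition, separate the closed subspace $\ker(d_{\alpha}d^c_{\alpha})$ from the open cone of positive-definite forms inside $\mathcal{E}^{1,1}(S,\R)$, and read off the obstruction current $T$ as the separating functional; closed range then enters on your side to identify the annihilator of the kernel with the image of the transpose, i.e.\ to upgrade $T|_{\ker(d_{\alpha}d^c_{\alpha})}=0$ to genuine exactness. The two uses are equivalent (a continuous map of Fr\'echet spaces has closed range iff its transpose has weak-$*$ closed range, in which case that range is the annihilator of the kernel), so neither route is cheaper, though yours avoids invoking Banach--Alaoglu. Two small points: the cokernel of $d_{\alpha}d^c_{\alpha}:\mathcal{E}^{1,1}(S,\R)\to\mathcal{E}^{2,2}(S,\R)$ is the top-degree twisted Bott--Chern group (dual, via Lemma~\ref{l:duality}, to the degree-zero Aeppli group of the dual flat bundle), not $H^{1,1}_{A,\alpha}$ as you write --- harmless, since all these groups are finite dimensional by elliptic theory, which is all that is needed for the range to have finite codimension and hence be closed; and when passing from $T|_{C}>0$ to $T\ge 0$ you should say explicitly that each weakly positive form $i\xi\wedge\bar\xi$ lies in the closure of the open cone $C$, so nonnegativity follows by continuity.
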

\begin{proof} $(\Longrightarrow)$ If there exists $\tau \in \mathcal{D}'_4(S,\R)$ such that $T=d_{\alpha}d^c_{\alpha}\tau$, then $T(F)=\tau(d_{\alpha}d^c_{\alpha}F)=0$. As $F>0$, it follows that $T=0$.

\smallskip
$(\Longleftarrow)$   The closed mapping theorem asserts that the adjoint of a linear mapping with a closed range has a closed range (see e.g. \cite{schaeffer}, Ch.IV, Sec.7.7). As $H^{BC, \a}_{1,1}(S, \C) \cong H^{1,1}_{BC, -\a}(S, \C) \cong H^{1,1}_{BC}(S, \cal L^*)$ is finite dimensional,  a similar argument as the one in \cite[p.~174]{HL} shows that the subspace $\mathcal C=\{T=d_{\alpha}d^c_{\alpha}\tau, \tau\in \mathcal{D}'_4(S, \R)\}$ is a closed subspace in ${\cal D'}_{1,1}(S, \R)$.  Let $\psi>0$ be a positive definite $(1,1)$ form and consider the weakly  compact convex subset $\mathcal K=\{T\in {\cal D'}_{1,1}(S, \R)\mid  T\ge  0, T(\psi)=1\}$. We have $\mathcal C\cap \mathcal K=\emptyset$ by the assumption. The spaces $\cal E^{p,q}(S, \C)$ and $\cal D'_{p,q}(S, \C)$ being dual of each other \cite[p.~75]{schwartz}, Hahn--Banach separation theorem implies that there exists a smooth $(1,1)$-form $F$ such that
\[  F_{\mid \mathcal C}=0 \, \, \textrm{and} \, \,  F_{\mid \mathcal K}>0. \]
The first condition implies that for each $\tau\in {\cal D'}_4(S, \R)$, 
$d_{\alpha}d^c_{\alpha}\tau(F)=\tau(d_{\alpha}d^c_{\alpha}F)=0,$
hence $d_{\alpha}d^c_{\alpha}F=0$,  whereas the second condition yields $F>0$. \end{proof}

Furthermore, as shown in \cite[Proposition~2.12 \& Lemma~2.13]{lcs}, the relevance of the condition \eqref{twisted-pluriclosed} is given by the following

\begin{prop} \label{p:lcs-characterization} On a compact complex surface $S$  with $b_1(S)=1$,  Kodaira dimension ${\rm kod}(S)=-\infty$, and whose minimal model has positive second Betti number, the following conditions are equivalent.
\begin{enumerate}
\item[\rm (i)] $0\neq a \in H^1_{dR}(S, \R)$ is such that there exists a hermitian  metric $g$ on $S$ and a representative $\alpha \in a$,  so that fundamental $2$-form $F$ of $g$ satisfies \eqref{twisted-pluriclosed};
\item[\rm (ii)]  $0\neq a \in H^1_{dR}(S, \R)$ is such that for any $\alpha \in a$, there exists a hermitian  metric $g$ on $S$  whose  fundamental $2$-form $F$ of $g$ satisfies \eqref{twisted-pluriclosed};
\item[\rm (iii)]  $0\neq a \in H^1_{dR}(S, \R)$ is such that for any $\alpha \in a$,  $S$ admits a locally conformally symplectic $2$-form $\omega$ which tames $J$ and satisfies $d_{\alpha} \omega  =0$. 
\end{enumerate}
\end{prop}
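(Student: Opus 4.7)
The plan is to prove the chain of equivalences by establishing $(\text{ii}) \Rightarrow (\text{i})$ (trivial), $(\text{iii}) \Rightarrow (\text{ii})$, $(\text{i}) \Rightarrow (\text{ii})$, and $(\text{ii}) \Rightarrow (\text{iii})$. For $(\text{iii}) \Rightarrow (\text{ii})$, I would decompose the taming LCS form by type as $\omega = \omega^{2,0} + F + \omega^{0,2}$, where $F := \omega^{1,1} > 0$ by the taming condition, and split $d_\alpha \omega = 0$ along bidegrees. Since $\dim_{\mathbb{C}} S = 2$ kills $(3,0)$- and $(0,3)$-forms, the only surviving constraint is the $(2,1)$-equation $\bar\partial_\alpha \omega^{2,0} + \partial_\alpha F = 0$; applying $\bar\partial_\alpha$ and using $\bar\partial_\alpha^2 = 0$ (which holds because $d\alpha = 0$ splits by type) yields $\partial_\alpha \bar\partial_\alpha F = 0$, i.e.\ $d_\alpha d^c_\alpha F = 0$. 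For $(\text{i}) \Rightarrow (\text{ii})$ I would argue by conformal rescaling: for any $\alpha' = \alpha + df \in a$ set $\tilde g := e^f g$, so $\tilde F = e^f F$; the clean twisted Leibniz identity $d_{\alpha + df}(e^f\phi) = e^f d_\alpha\phi$ together with its $d^c$-analogue then gives $d_{\alpha'} d^c_{\alpha'}\tilde F = e^f d_\alpha d^c_\alpha F = 0$.

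The core implication is $(\text{ii}) \Rightarrow (\text{iii})$. Given $F > 0$ with $\partial_\alpha\bar\partial_\alpha F = 0$, I try to build a real $2$-form $\omega = F + \omega^{2,0} + \overline{\omega^{2,0}}$ with $d_\alpha\omega = 0$. Nondegeneracy is automatic, since
\[
\omega^2 = F^2 + 2\,\omega^{2,0}\wedge\overline{\omega^{2,0}}
\]
is strictly positive regardless of $\omega^{2,0}$ (as $F^2 > 0$ and $\omega^{2,0}\wedge\overline{\omega^{2,0}} \geq 0$ as top forms), and the taming condition $\omega^{1,1} = F$ is built in. Because there are no $(3,0)$-forms on $S$, the only equation to solve is
\[
\bar\partial_\alpha \omega^{2,0} = -\partial_\alpha F,
\]
whose right-hand side is $\bar\partial_\alpha$-closed by hypothesis and therefore defines a class in $H^{2,1}_{\bar\partial_\alpha}(S,\mathbb{C}) \cong H^1(S, K_S \otimes \mathcal{L}^*)$ via the Dolbeault isomorphism \eqref{dolbeault}; the equation is solvable iff this class vanishes.

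The main obstacle is proving this vanishing. By Serre duality, $H^1(S, K_S\otimes\mathcal{L}^*) \cong H^1(S, \mathcal{L})^*$, and the pairing of $[\partial_\alpha F]$ against a $\bar\partial_{-\alpha}$-closed $(0,1)$-form $\xi$ (valued in $\mathcal{L}$) equals $\int_S \partial_\alpha F\wedge\xi$. Integration by parts via the Leibniz rule \eqref{star} rewrites this as $-\int_S F\wedge \partial_{-\alpha}\xi$, which I identify with the natural Aeppli--Bott--Chern pairing of Lemma~\ref{l:duality} between $[F]\in H^{1,1}_{A,\alpha}(S, \mathbb{C})$ and $[\partial_{-\alpha}\xi] \in H^{1,1}_{BC,-\alpha}(S, \mathbb{C})$. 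This functional of $\xi$ already vanishes on $\bar\partial_{-\alpha}$-exact $\xi = \bar\partial_{-\alpha} h$, since $\int_S F\wedge \partial_{-\alpha}\bar\partial_{-\alpha}h = \pm\int_S (\partial_\alpha\bar\partial_\alpha F)\,h = 0$, and thus descends to a well-defined functional on $H^1(S, \mathcal{L})$. The geometric hypotheses $b_1(S)=1$, $\mathrm{kod}(S) = -\infty$ and $b_2>0$ on the minimal model then enter precisely to force this descended functional to vanish identically, ruling out K\"ahler-type obstructions and, in the class VII setting, producing the vanishing of the relevant Bott--Chern class $[\partial_{-\alpha}\xi]$ via a refinement of the $\partial\bar\partial$-lemma as carried out in \cite{lcs}. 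This is the crux of the argument and the only step in which the full strength of the hypotheses on $S$ is really used.
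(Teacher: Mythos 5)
Your architecture is the natural one and the easy implications are handled correctly: (iii)$\Rightarrow$(ii) by splitting $d_\alpha\omega=0$ into bidegrees and applying $\bar\partial_\alpha$ to the $(2,1)$-component, (i)$\Leftrightarrow$(ii) by the conformal covariance $d_{\alpha+df}d^c_{\alpha+df}(e^fF)=e^f d_\alpha d^c_\alpha F$ (which the paper itself exploits, e.g.\ in the proof of Proposition~\ref{p:obstruction}), and (ii)$\Rightarrow$(iii) by reducing to the solvability of $\bar\partial_\alpha\omega^{2,0}=-\partial_\alpha F$, whose obstruction lies in $H^{2,1}_{\bar\partial_\alpha}(S,\C)\cong H^1(S,K_S\otimes\cL^*)\cong H^1(S,\cL)^*$. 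Be aware that the paper contains no internal proof of this proposition: it is quoted from \cite[Proposition~2.12 \& Lemma~2.13]{lcs}, so the only possible comparison is with that reference.

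The genuine gap is that you never prove the one statement carrying all the content, namely that the descended functional $[\xi]\mapsto\int_S F\wedge\partial_{-\alpha}\xi$ vanishes identically on $H^1(S,\cL)$; you assert that the hypotheses ``force'' this ``via a refinement of the $\partial\bar\partial$-lemma as carried out in \cite{lcs}'', which is a citation, not an argument. The vanishing is not formal. By Riemann--Roch $\chi(\cL)=\chi(\mathcal O_S)=0$ for a flat bundle, so $h^1(\cL)=h^0(\cL)+h^0(K_S\otimes\cL^*)$, and the obstruction space need not vanish (on an Enoki surface the cycle of rational curves gives a nontrivial flat bundle with $h^0\neq 0$); one must therefore kill the specific class $[\partial_\alpha F]$, and this is exactly where the hypothesis that the minimal model has $b_2>0$ enters, through the vanishing $H^0(S,K_S\otimes\cL)=0$ for every flat $\cL$ on class ${\rm VII}_0^+$ surfaces. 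That this hypothesis is indispensable is visible from the Inoue--Bombieri surfaces, where $K_S\otimes\cL\cong\mathcal O_S$ for a suitable flat $\cL$ and the equivalence itself breaks down ($\tauc(S)$ is a single point while condition (i) is far less restrictive). Concretely, what is missing is a proof that $[\partial_{-\alpha}\xi]_{BC}=0$ for every $\bar\partial_{-\alpha}$-closed $(0,1)$-form $\xi$ --- equivalently, that every class in $H^{0,1}_{\bar\partial_{-\alpha}}(S)$ admits a $d_{-\alpha}$-closed representative --- or some other argument pairing $[F]$ to zero against the image of $\partial_{-\alpha}$. Until that step (the substance of \cite[Lemma~2.13]{lcs}) is supplied, the implication (ii)$\Rightarrow$(iii) is unproved.
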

\begin{rem}\label{r:weak} On \emph{any} complex manifold $X=(M, J)$ endowed with a LCS structure $\omega$ taming $J$ and with Lee form $\alpha$, the fondamental $2$-form $F= \omega^{1,1}>0$ of the corresponding hermitian structure satisfies \eqref{twisted-pluriclosed}. The above Proposition provides a converse.
\end{rem}
\begin{defn} On a complex surface $S$  we denote by $\tauc(S)\subset H^1_{dR}(S, \R)$ the set of deRham classes  of the Lee forms $\alpha$ of locally conformally symplectic  $2$-forms $\omega$ which tame the complex structure $J$.
\end{defn}
By \cite[Theorem~1]{lcs}, on any compact complex surface  $\tauc(S) \neq \emptyset$. Furthermore, when $b_1(S)=1$, we can define an  order on $H^1_{dR}(S, \R)$  by the degree of  a  flat holomorphic line bundle ${\cal L}_{a}$  corresponding to $a\in H^1_{dR}(S, \R)$ via \eqref{H1}, with respect to some (and hence any \cite {Teleman}) Gauduchon metric on $S$. Then,  \cite[Proposition~4.3]{lcs} yields
\begin{prop}\label{p:tau}  On a compact complex surface $S$  with $b_1(S)=1$ and ${\rm kod}(S)=-\infty$ we have $\emptyset\neq \tauc(S) \subset (-\infty,  0)$.
\end{prop}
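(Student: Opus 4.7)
The plan is to turn the positivity of the fundamental form $F$ of the taming LCS into a strict negative sign for $\deg_g(\cal L_a)$, by applying Stokes's theorem to the twisted pluriclosed equation $d_\alpha d^c_\alpha F = 0$. Non-emptiness of $\tauc(S)$ is already the content of Theorem~\ref{thm:conformal}, so only the bound $\tauc(S)\subset(-\infty,0)$ needs a proof.

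Given $a\in\tauc(S)$, one first checks that $a\ne 0$: if $\alpha=df$ were exact, then $e^{-f}\omega$ would tame $J$ as a genuine symplectic form, forcing $b_1(S)$ even and contradicting $b_1(S)=1$. Next, I would use Theorem~\ref{thm:conformal} together with Proposition~\ref{p:lcs-characterization} to pick a pair $(F,\alpha)$ in which $F>0$ is the \emph{Gauduchon} representative ($\partial\bar\partial F=0$) in the unique conformal class attached to $a$, and $\alpha\in a$ is the distinguished Lee form satisfying $d_\alpha d^c_\alpha F=0$. Expanding
\[ d_\alpha d^c_\alpha F = dd^c F - d(J\alpha\wedge F) - \alpha\wedge d^c F + \alpha\wedge J\alpha\wedge F = 0, \]
and integrating over $S$, the first two terms vanish (the first by the Gauduchon condition and by Stokes, the second by Stokes), leaving
\[ \int_S \alpha \wedge d^c F \;=\; \int_S \alpha\wedge J\alpha\wedge F. \]
A further integration by parts---using that $\int_S d^c\nu=0$ for any smooth $3$-form $\nu$ on the surface, because $J$ acts as the identity on top-degree forms, so $d^c\nu=dJ^{-1}\nu$---converts the left-hand side into $\int_S d^c\alpha\wedge F$.

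The right-hand side is strictly positive: a type decomposition gives $\alpha\wedge J\alpha = 2i\,\alpha^{1,0}\wedge\overline{\alpha^{1,0}}$, a semi-positive $(1,1)$-form vanishing only where $\alpha=0$. Since $F>0$ everywhere and $\alpha\not\equiv 0$, one obtains $\int_S\alpha\wedge J\alpha\wedge F>0$. A standard Chern--Weil computation on $\cal L_a$ in its flat trivialization, combined with a unitary hermitian metric $h=e^{-2\varphi}$, yields $c_1(\cal L_a,h)=-\tfrac{1}{2\pi}d^c\alpha+\tfrac{1}{2\pi}dd^c\varphi$; the $dd^c$-exact correction integrates to zero against the Gauduchon form $F$, so
\[ \deg_g(\cal L_a)=-\frac{1}{2\pi}\int_S d^c\alpha\wedge F<0, \]
i.e.\ $a<0$ in the ordering of $H^1_{dR}(S,\R)$.

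The main subtlety I expect is the alignment, in a single pair $(F,\alpha)$, of the Lee form produced by the chosen taming LCS and the distinguished Lee form that Theorem~\ref{thm:conformal} attaches to the Gauduchon representative of the relevant conformal class; Proposition~\ref{p:lcs-characterization} is precisely what permits this, and once the two choices are unified, the rest is a short sequence of integrations by parts that exploits the positivity of $\alpha\wedge J\alpha$ and of $F$.
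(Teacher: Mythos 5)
Your proof is correct, and its signs are consistent with the paper's conventions: with $d^c=JdJ^{-1}=i(\bar\partial-\partial)$ and $L_a$ carrying the flat connection $\nabla^{\alpha}=d+\alpha$, the Chern form of $\cal L_a$ in the flat trivialization is indeed $-\tfrac{1}{2\pi}d^c\alpha$ up to a $dd^c$-exact term that the Gauduchon condition kills, and $\alpha\wedge J\alpha=2i\,\alpha^{1,0}\wedge\overline{\alpha^{1,0}}\ge 0$ with $\alpha\wedge J\alpha\wedge F=\tfrac12|\alpha|^2_g\,F\wedge F$, which is the very identity the paper uses in passing from the first to the second line of \eqref{computation}. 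Note that the paper gives no proof of Proposition~\ref{p:tau} beyond citing \cite[Theorem~1.1, Proposition~4.3]{lcs}; your Stokes argument is the globally integrated form of the pointwise identity that does appear here, namely $\delta^g\alpha+\langle\alpha-\theta_g,\alpha\rangle_g=0$ for the Gauduchon representative (stated right after \eqref{computation}), whose integral gives $\int_S\langle\theta_g,\alpha\rangle_g\,F\wedge F=\int_S|\alpha|_g^2\,F\wedge F>0$ --- the same inequality as your $\int_S d^c\alpha\wedge F=\int_S\alpha\wedge J\alpha\wedge F$, since $d^cF=J\theta_g\wedge F$ and $\gamma\wedge J\eta\wedge F=\tfrac12\langle\gamma,\eta\rangle_g\,F\wedge F$. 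What your packaging buys is that it bypasses the codifferential/Lee-form formalism and makes clear that the Gauduchon condition enters only through the well-definedness of $\deg_g$. One adjustment: to produce a Gauduchon $F>0$ with $d_\alpha d^c_\alpha F=0$ and $\alpha\in a$, quote Remark~\ref{r:weak} together with the conformal covariance $d_{\alpha+df}\,d^c_{\alpha+df}(e^fF)=e^f\,d_\alpha d^c_\alpha F$ (exactly as at the start of the proof of Proposition~\ref{p:obstruction}), rather than Proposition~\ref{p:lcs-characterization}, which is stated only when the minimal model has positive second Betti number and is not needed for this direction. Your preliminary step $a\neq 0$, via the taming-symplectic obstruction of \cite{gauduchon,lamari}, is genuinely required for the strict inequality and is correctly placed.
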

We are now going to establish the following refinement of Proposition~\ref{p:conformal-dual} in the case of complex surfaces with $b_1(S)=1$ and ${\rm kod}(S) =-\infty$.
\begin{prop}\label{p:obstruction} Let $S$ be a compact complex surface  with $b_1(S)=1$ and  Kodaira dimension ${\rm kod}(S)=-\infty$. 
\begin{enumerate}
\item[\rm (a)] Suppose $\beta \in b \in H^1_{dR}(S, \R)$ is such that there exists a  weakly negative current $\tau \le 0$ of degree $0$,  with  $T:= d_{\beta}d^c_{\beta}\tau \ge 0$  and $T\neq 0$. Then  $b\in (-\infty, 0)  \, \, \textrm{and} \, \, \tauc(S)\subset (-\infty, b).$\\
 Conversely, if $\tauc(S) \subset (-\infty, b)$ for some $b<0$, then  for each $\beta \in b$ there exists a  weakly negative current $\tau \le 0$ of degree $0$,  with  $T:= d_{\beta}d^c_{\beta}\tau \ge 0$  and $T\neq 0$. 
\item[\rm (b)] Suppose $\delta \in d \in H^1_{dR}(S, \R)$ is such that there exists a weakly positive current $\tau \ge 0$ of degree $0$,  with  $T:= d_{\delta}d^c_{\delta}\tau \ge 0$  and $T\neq 0$.  Then either $d \in (0, \infty)$ or else $\tauc(S) \subset (d, 0)$.\\
 Conversely, if $\tauc(S) \subset (d, 0)$ for some  de Rham class $d <0$,  or if $d>0$,  then  for each $\delta \in d$ there exists a weakly positive current $\tau \ge 0$ of degree $0$,  with  $T:= d_{\delta}d^c_{\delta}\tau \ge 0$  and $T\neq 0$. 
\end{enumerate}
\end{prop}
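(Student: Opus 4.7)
The statement refines Proposition~\ref{p:conformal-dual} by imposing a sign condition on the potential $\tau$ of the $d_\beta d^c_\beta$-exact positive current. My plan combines the equivalences of Proposition~\ref{p:lcs-characterization} with an integration-by-parts argument on the minimal $\mathbb Z$-cover $\hat S$, together with a sharpened Hahn--Banach separation on the convex cone of currents of a definite sign.

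For the forward direction of (a), assume $\beta \in b$ and $\tau \le 0$ give $T := d_\beta d^c_\beta \tau \ge 0$, $T \ne 0$. For any $a \in \tauc(S)$, Proposition~\ref{p:lcs-characterization} provides $\alpha \in a$ and a hermitian form $F > 0$ with $d_\alpha d^c_\alpha F = 0$. Pulling back to $\hat S$, where $\alpha = dh$ and $\beta = df$ are exact, the objects $\tilde F := e^{-h} F$ and $\tilde \tau := e^{-f} \tau$ satisfy $dd^c \tilde F = 0$, $\tilde F > 0$, $dd^c \tilde \tau \ge 0$, $\tilde \tau \le 0$, and are multiplicatively automorphic under the cyclic deck generator with constants $C_a = e^{-c_a}$ and $C_b = e^{-c_b}$, where $c_a, c_b$ are the periods of $\alpha$ and $\beta$. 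Integrating $dd^c \tilde \tau \wedge \tilde F$ over a fundamental domain $D$ produces a strictly positive quantity (since $T \ne 0$ and $F > 0$); on the other hand, Stokes combined with $dd^c \tilde F = 0$ rewrites it as a boundary integral equal to $(1 - C_a C_b)$ times a quantity whose sign is forced by $\tilde \tau \le 0$. The resulting sign constraint on $c_a + c_b$, combined with the identification of the period with the degree of the associated flat line bundle, yields $a < b$ for every $a \in \tauc(S)$. Taking $a$ near the upper end of the non-empty open set $\tauc(S)$ (see \cite{lcs1}) then gives $b < 0$ as well.

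For the converse direction of (a), I sharpen the Hahn--Banach argument of Proposition~\ref{p:conformal-dual} by working with the smaller convex cone
\[\mathcal C^{-} := \bigl\{\, d_\beta d^c_\beta \tau \ :\ \tau \in \mathcal D'_4(S, \R),\ \tau \le 0\,\bigr\} \subset \mathcal D'^{1,1}(S, \R),\]
which remains closed for the same reasons as there (the relevant Bott--Chern cohomology is finite dimensional and $\{\tau \le 0\}$ is itself a closed convex cone). If $\mathcal C^{-}$ avoids the weakly compact slice $\mathcal K_\psi := \{T \ge 0 : T(\psi) = 1\}$, Hahn--Banach produces a smooth $(1,1)$-form $F > 0$ with $\tau\bigl(d_{-\beta} d^c_{-\beta} F\bigr) \le 0$ for every $\tau \le 0$, equivalently $d_{-\beta} d^c_{-\beta} F \ge 0$ as a top-degree form. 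Lifting to $\hat S$, the positive form $e^f F$ is then weakly twisted subharmonic, and a linear elliptic regularity argument using the strong maximum principle for the second-order operator $d_{-\beta} d^c_{-\beta}$ (a zeroth-order perturbation of $dd^c$) should upgrade this to a nearby Lee class $b'$ with $b' \ge b$ and $b' \in \tauc(S)$, contradicting $\tauc(S) \subset (-\infty, b)$. Part (b) is handled by analogous arguments, with ``negative'' and ``non-negative'' currents interchanged and the sign of the boundary integral reversed; the additional case $d > 0$ in (b) carries no constraint on $\tauc(S)$ since $\tauc(S) \subset (-\infty, 0)$ by Proposition~\ref{p:tau}.

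The main obstacle I anticipate is the final step of the converse: promoting the weak inequality $d_{-\beta} d^c_{-\beta} F \ge 0$ (equivalently, the weakly subharmonic positive form $e^f F$ on $\hat S$) to an equality $d_{-\beta'} d^c_{-\beta'} F' = 0$ for a class $b'$ that strictly fails the bound $(-\infty, b)$. This requires the strong maximum principle together with a careful perturbation analysis in the one-dimensional parameter space $H^1_{dR}(S, \R)$ to control how the Lee class shifts. The sign condition $b < 0$ is essential at this step, as it governs the sign of the zeroth-order term of the operator $d_{-\beta} d^c_{-\beta}$ being inverted; without it, the weak inequality need not upgrade.
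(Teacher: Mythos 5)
Your overall architecture (pair the current with a taming hermitian form for the forward direction; Hahn--Banach separation for the converse) matches the paper's, but both halves have genuine gaps. In the forward direction, the fundamental-domain argument on $\hat S$ does not close: after writing $\int_D dd^c\tilde u\wedge \tilde F$ as a boundary integral, the integrand $d^c\tilde u\wedge\tilde F-\tilde u\,d^c\tilde F$ restricted to a hypersurface has no sign forced by $\tilde u\le 0$ alone, and the automorphy factor you extract, $(1-C_aC_b)$, constrains $c_a+c_b$ rather than $c_b-c_a$, so it cannot yield $a<b$ for \emph{every} $a\in\tauc(S)$. (Also, ``taking $a$ near the upper end of $\tauc(S)$'' does not give $b<0$, since $a<b$ for all $a$ is compatible with $b\ge 0$ when $\tauc(S)$ is bounded above by a negative class.) The ingredient you are missing is the paper's computation \eqref{computation}: choosing $g$ Gauduchon with $d_\alpha d^c_\alpha F=0$ and writing $\beta=t\alpha$ (possible since $h^1_{dR}=1$), one gets the exact identity $d_\beta d^c_\beta F=\tfrac{t(t-1)}{2}|\alpha|^2_g\,F\wedge F$ on the compact surface $S$ itself; pairing with $\tau\le0$ then forces $t\in(0,1)$, which simultaneously gives $b=ta<0$ and $a<b$ with no boundary terms at all.

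In the converse direction there are two problems. First, the closedness of your cone $\mathcal C^-=\{d_\beta d^c_\beta\tau:\tau\le0\}$ does not follow ``for the same reasons'' as the closedness of the full image $\{d_\beta d^c_\beta\tau\}$: the latter uses the closed range theorem and finite-dimensionality of $H^{1,1}_{BC}$, which is a statement about the linear image, not about the image of a sub-cone; a sequence $d_\beta d^c_\beta\tau_n\to T$ with $\tau_n\le0$ could have $\tau_n$ escaping to infinity, and ruling this out requires excluding nonzero negative $d_\beta d^c_\beta$-closed degree-$0$ currents, which is a nontrivial case (it is essentially the Inoue--Bombieri situation). The paper avoids this by reversing the roles: it separates the closed cone $\mathcal P=\{T\ge0\}$ from the \emph{weakly compact} convex set $\{d_\beta d^c_\beta\tau:\tau\le0,\ \tau(\psi\wedge\psi)=-1\}$. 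Second, and more importantly, that choice of roles is what produces the \emph{strict} inequality $d_\beta d^c_\beta\tilde F>0$ together with $\tilde F\ge0$ (then perturbed to $\tilde F>0$ by openness), whereas your setup only yields the weak inequality $d_\beta d^c_\beta F\ge0$ with $F>0$, and you correctly identify that you cannot finish from there. With the strict inequality in hand, the paper conformally rescales to a Gauduchon representative, uses Proposition~\ref{p:lcs-characterization} and the hypothesis $\tauc(S)\subset(-\infty,b)$ to produce $t\in(0,1)$ and $h$ with $d_{\tilde\alpha}d^c_{\tilde\alpha}F=0$, and converts $d_\beta d^c_\beta\tilde F>0$ into the pointwise inequality $\Delta^g(f-h)+\langle d(f-h),2\tilde\beta-\theta_g\rangle_g+|d(f-h)|_g^2>0$, which is contradicted at a minimum of $f-h$. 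No perturbation in the one-dimensional parameter space of Lee classes is needed. Finally, note that Proposition~\ref{p:lcs-characterization} requires the minimal model to have $b_2>0$; the Hopf and Inoue--Bombieri cases must be treated separately, as the paper does.
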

\begin{proof} (a)  We first notice that the condition $\tau \le 0$ is conformally invariant and $$d_{\beta} d^c_{\beta} \big(e^{f} \tau\big) = e^{f} \big(d_{\tilde \beta} d^c_{\tilde \beta} \tau\big)$$ with $\tilde \beta =\beta - df$. Thus,  after making a conformal modification of $T$  we can assume that $T= d_{\beta} d^c_{\beta} \tau \ge0, \, T\neq 0$ for some weakly negative degree zero current $\tau$ and  any given $\beta \in b$.

Let  $a\in \tauc(S)$. By Remark~\ref{r:weak}, there is an $\alpha\in a$ and a hermitian metric $g$ on $S$ whose fundamental $2$-form $F$ satisfies \eqref{twisted-pluriclosed}.  By changing $g$ in its conformal class  and $\alpha$ in its deRham class $a$, we can assume that $g$ is a Gauduchon metric~\cite{gauduchon-0} on $S$. Writing $b = ta$ for some $t>0$, we let $\beta:=t\alpha \in b$. Denoting  by $\theta_g= J \delta^g F$ be the Lee form of $g$, we compute as in \cite[Lemma~2.5]{lcs}:
\begin{equation}\label{computation}
\begin{split}
d_{\beta}d^c_{\beta} F = & \Big(d(J\theta_g -J\beta) + (\theta_g-\beta)\wedge J(\theta_g -\beta)\Big)\wedge F \\
                                     =& \frac{1}{2}\Big(\delta^g(\beta-\theta_g) + \big\langle(\beta-\theta_g), \beta\big\rangle_g\Big) F\wedge F \\
                                     =&\frac{1}{2}\Big(\delta^g\beta + \big\langle(\beta-\theta_g), \beta\big\rangle_g\Big) F\wedge F  \\
                                     =&\frac{t}{2}\Big(\delta^g\alpha + \big\langle(t\alpha-\theta_g), \alpha \big\rangle_g\Big) F\wedge F \\
                                     =& \frac{(t-1)t}{2}|\alpha|_g^2F\wedge F,
                                      \end{split}
                                     \end{equation}
                                     where for passing from the first line to the second we use the computation appearing in the proof of  \cite[Lemma~2.5]{lcs}, from the  second line to the third we we have used that $g$ is Gauduchon (i.e. $\delta^g \theta_g=0$) and  for passing from the forth line to the fifth we have used the identity
                                     \[\delta^g \alpha + \langle \alpha -\theta_g, \alpha \rangle_g=0\]
                                     which follows from Lemma 2.5 in \cite{lcs} and the fact that $g$ is Gauduchon.
                                      Applying $T=d_{\beta}d^c_{\beta} \tau$ to  $F$ (with $\beta = t\alpha$), we get
                                      \begin{equation}\label{inequality}
                                     0 < T(F) = \frac{t(t-1)}{2}\tau\Big(|\alpha|^2\ F\wedge F\Big).
                                     \end{equation}
                                     As $\tau\Big(|\alpha|^2{F\wedge F}\Big)\le 0$ by assumption, we obtain $t(t-1) < 0$, i.e. $t\in (0, 1)$. By Proposition~\ref{p:tau},  $t>0$ is equivalent to  $b\in (-\infty, 0)$; furthermore,  $\alpha = \beta/t < \beta$.   
                                     
 \smallskip
We now establish the other direction. We thus assume that $S$ is a  compact complex surface in the Kodaira class {\rm VII} and  $b\in (-\infty, 0)\subset H^1_{dR}(S, \R)$ is  such that $\tauc(S) \subset (-\infty, b)$.  We are going to show that  for any  smooth closed $1$-form $\beta \in b$,   there exists a weakly negative current $\tau\le 0$ of degree $0$ on $S$,  such that $T:=d_{\beta} d^c_{\beta}\tau \ge0$ and $T \neq 0$.

\smallskip
We first assume that the minimal model of $S$ has non-zero second Betti number, i.e. we are under the hypothesis of Proposition~\ref{p:lcs-characterization}.  In this case, $b$ equivalently bounds  the de Rham classes $a$ for which \eqref{twisted-pluriclosed} holds for some hermitian metric on $S$.
For $\beta \in b$ and a positive definite smooth $(1,1)$-form  $\psi$ on $S$ we consider the following sets
\[ 
\begin{split}
\mathcal P & :=\{ T\in \mathcal{D}'_{1,1}(S, \R) \, | \, T\ge 0 \}, \\
\mathcal K &:= \{ T\in \mathcal{D}'_{1,1}(S, \R) \, | \, T=d_{\beta} d^c_{\beta} \tau, \,   \tau \le 0,  \, \tau(\psi\wedge \psi)=-1\}. 
\end{split}
\]
$\mathcal P$ is a closed convex cone whereas $\mathcal K$ is a weakly compact (by the Banach--Alaoglu theorem) and convex subset of $\mathcal{D}'_{1,1}(S, \R)$. We want to show that $\mathcal P\cap \mathcal K \neq \emptyset$.

Suppose for contradiction that $\mathcal P \cap \mathcal K = \emptyset$. By the Hahn--Banach separation theorem \cite[p.65]{schaeffer}, there exists a smooth $(1,1)$-form $\tilde F$ such that  for any $T_1\in \mathcal P$,  $T_1(\tilde F)\ge 0$ and for any $T_2\in \mathcal K$, $T_2(\tilde F)<0$. The first condition is equivalent to  $\tilde F$ being semi-positive (i.e. $\tilde F\ge 0$) whereas the second condition is equivalent to $(d_{\beta}d^c_{\beta} \tilde F) >0$ on $S$. As the latter condition is open, we can assume without loss that $\tilde F>0$ is positive-definite.    Let $f$ be  a smooth function such that  $F:=e^{f} \tilde F$  is a Gauduchon metric, see \cite{gauduchon-0}. We now apply Theorem~\ref{thm:conformal}.  By the assumption and Proposition~\ref{p:lcs-characterization} there exist a real number $t\in (0,1)$ and a smooth function  $h$,  such that   $d_{\tilde \alpha} d^c_{\tilde \alpha}  F =0$ for the $1$-form $\tilde \alpha = \frac{1}{t}(\beta + dh)$. Letting $\beta' = \beta + df= \tilde \beta + d(f-h)$, it follows from \eqref{computation}
\[
\begin{split}
0<e^{f}\big(d_{\beta} d^c_{\beta} \tilde F\big) =&  e^{f} \Big(d_{\beta} d^c_{\beta} \big(e^{-f }F\big)\Big) = d_{\beta'}d^c_{\beta'} F \\
                                              =& \frac{1}{2}\Big(\delta^g (\beta' -\theta_g) + \langle \beta' - \theta_g, \beta'\rangle\Big) F\wedge F \\
                                              =&  \frac{1}{2}\Big(\Delta^g(f -h) +  \big\langle d(f-h), (2\tilde\beta-\theta_g)\big\rangle_g + |d(f-h)|_g^2 \Big)F\wedge F \\
                                              &+ d_{\tilde \beta}d^c_{\tilde \beta} F \\
                                              =&\frac{1}{2} \Big(\Delta^g(f -h) +  \big\langle d(f-h), (2\tilde\beta-\theta_g)\big\rangle_g + |d(f-h)|_g^2\\
                                               & \, \, \,  + \frac{(t-1)}{t} |\tilde \beta|_g^2 \Big)F\wedge F.
\end{split}
\]
It thus follows that everywhere on $S$, 
\[\Delta^g(f -h) +  \big\langle d(f-h), (2\tilde\beta-\theta_g)\big\rangle_g + |d(f-h)|_g^2 >0,\]
At a point where $f-h$ reaches its minimum $ \Delta^g(f -h)\le 0$  and $d(f-h)=0$, which is impossible.

\smallskip
We now consider the case when the minimal model of $S$ has zero second Betti number. By \cite{andrei}, $S$ is obtained from either a Hopf surface or an Inoue--Bombieri surface by blowing up points. In the first case $\tauc(S)=(-\infty, 0)$ by \cite[Theorem~1.4]{lcs}, so there is nothing to prove. In the former case, $\tauc(S)=\{ a\}$ by \cite[Theorem~1.3]{lcs1}. This  also corresponds  to the case when there exists an $\alpha \in a$ such that  $d_{-\alpha} d^c_{-\alpha} (\pm 1)=0$ (see \cite[Theorem~4.1]{lcs1}). We can argue in this case (see the proof of Proposition~\ref{bounds-examples} below) that  for any $b>a$ there exists  a $\beta \in b$ and a degree zero current $\tau <0$ such that $d_{\beta} d^c_{\beta} \tau >0$.

\smallskip                              
Part (b) follows by similar arguments. \end{proof}
                                     
                                     
    The previous result can be complemented with the following useful observation.
   
 \begin{prop} \label{p:sign} Suppose $b\in (-\infty, 0) \subset H^1_{dR}(S, \R)$  is such that there exists a closed $1$-form $\beta\in b$ and a degree $0$ current  $\tau_{\beta}$ on $S$  with $T:= d_{\beta} d^c_{\beta} \tau_{\beta} \ge 0, \, T\neq 0$.  Suppose, furthermore, that there exists $a\in \tauc(S)$ such that $a< b$. Then,  $\tau_{\beta}$  is a negative current.  \end{prop}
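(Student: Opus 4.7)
The plan is to extract, on the minimal cyclic cover $\pi:\hat S \to S$ of Definition~\ref{d:cyclic}, the positive part $u^+$ of a PSH potential $u$ for $\tau_\beta$, and to exclude its non-triviality by combining Proposition~\ref{p:obstruction}(b) with the hypothesis $a\in\tauc(S)\cap(-\infty,b)$.

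Since $b_1(S)=1$ and $a<b<0$, we can write $b=ta$ in $H^1_{dR}(S,\R)$ with $t\in(0,1)$. Choose $\alpha\in a$, write $\beta=t\alpha+dh$ for a smooth function $h$, and use the conformal identity $d_{\tilde\beta}d^c_{\tilde\beta}(e^f\tau)=e^f d_\beta d^c_\beta\tau$ with $\tilde\beta=\beta+df$ (as in the proof of Proposition~\ref{p:obstruction}(a)) to replace $(\beta,\tau_\beta)$ by $(t\alpha, e^{-h}\tau_\beta)$; this preserves the hypothesis $d_\beta d^c_\beta\tau_\beta=T\ge 0$, $T\neq 0$ and the target conclusion $\tau_\beta\le 0$. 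Hence we may assume $\beta=t\alpha$. Lifting to $\hat S$, where $\pi^*\alpha=df$, setting $u:=e^{-tf}\pi^*\tau_\beta$ yields $\pi^*T=e^{tf}dd^c u$, so $u$ is a PSH distribution on $\hat S$; passing to its upper semi-continuous representative, $u$ becomes a classical PSH function, multiplicatively automorphic under the deck generator $\gamma$ with constant of automorphy $C>1$, in the sense of Proposition~\ref{p:main}(b).

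Assume for contradiction that $\tau_\beta$ is not negative. Then $u$ is not $\le 0$, so $u^+:=\max(u,0)$ is a non-trivial, non-negative PSH function on $\hat S$ (being the maximum of two PSH functions), still $\gamma$-automorphic with the same constant $C$ (since $C>0$). Consequently $e^{tf}u^+$ is $\gamma$-invariant and descends to a non-trivial, non-negative degree-$0$ current $\tau_\beta^+$ on $S$, with $d_\beta d^c_\beta\tau_\beta^+\ge 0$ being the descent of $e^{tf}dd^c u^+$. If $dd^c u^+\not\equiv 0$, then $d_\beta d^c_\beta\tau_\beta^+\neq 0$, and Proposition~\ref{p:obstruction}(b) applied to $(\beta,\tau_\beta^+)$ with $b<0$ forces $\tauc(S)\subset(b,0)$, contradicting $a\in\tauc(S)$ with $a<b$. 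Otherwise $u^+$ is pluriharmonic; by elliptic regularity it is smooth, and locally $u^+=\Re h$ for a holomorphic function $h$. The open mapping theorem prevents $\Re h$ from attaining the value $0$ while remaining $\ge 0$ nearby, unless $h$ is locally constant; hence $\{u^+=0\}$ is open, and being closed by continuity, the connectedness of $\hat S$ together with the non-triviality of $u^+$ forces $u^+>0$ everywhere. Thus $u=u^+$ is pluriharmonic, so $T$ lifts to $e^{tf}dd^c u\equiv 0$, contradicting $T\neq 0$. The most delicate step is this pluriharmonic sub-case, where one must invoke both the smoothness of pluriharmonic distributions (to apply the open mapping theorem to a local holomorphic primitive of $u^+$) and the connectedness of $\hat S$ to rule out isolated zeros of the non-negative $u^+$.
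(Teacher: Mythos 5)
Your proof is correct, but it takes a genuinely different route from the paper's. The paper stays entirely on $S$ and argues by linear elliptic PDE: after the same conformal normalization $\beta=t_0\alpha$ with $d_{\alpha}d^c_{\alpha}F_0=0$ for a Gauduchon form $F_0$, it computes $d_{\beta}d^c_{\beta}\bigl(e^fF_0\bigr)=-\tfrac12 P\bigl(e^{(1-t_0)f}\bigr)e^{t_0f}F_0\wedge F_0$ for a second-order linear elliptic operator $P$ whose zeroth-order coefficient $t_0(1-t_0)|\alpha_0|^2_{g_0}$ is non-negative and not identically zero; the strong maximum principle makes $P$ invertible with a positivity-preserving inverse, so \emph{every} $w\ge0$, $w\not\equiv0$, is realized as $d_{\beta}d^c_{\beta}F=-\tfrac12\,w\,e^{t_0f}F_0\wedge F_0$ for a suitable hermitian form $F=e^fF_0$, and testing $T\ge 0$, $T\ne 0$ against these $F$ gives $\tau_{\beta_0}(wF_0\wedge F_0)<0$ for all such $w$, i.e.\ strict negativity. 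You instead pass to the cyclic cover, split the automorphic PSH potential into $u^+=\max(u,0)$ and the rest, and kill $u^+$ by Proposition~\ref{p:obstruction}(b) in the non-pluriharmonic case and by an open-mapping/connectedness argument in the pluriharmonic case. This is sound: there is no circularity, since the forward direction of Proposition~\ref{p:obstruction}(b) is the direct computation \eqref{computation}--\eqref{inequality} with $\tau\ge0$ and does not use the present Proposition; and $\max(Cu,0)=C\max(u,0)$ for $C>0$, so $u^+$ is indeed an automorphic PSH function that descends to a weakly positive degree-$0$ current. Your route is softer and reuses machinery already in place, at the price of delivering only $\tau_\beta\le0$ rather than the strict pairing inequality $\tau_{\beta_0}(wF_0\wedge F_0)<0$ that the paper obtains; the weaker conclusion is all that is used downstream (e.g.\ in Theorem~\ref{thm:tau-connected}), and strictness is in any case recoverable in one line from the strong maximum principle: a PSH potential $u\le 0$ vanishing on a set of positive measure attains an interior maximum, hence is identically zero, forcing $T=0$.
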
     
 \begin{proof} 
 It follows by the assumptions on $T=d_{\beta} d^c_{\beta}\tau_\beta$ that
 \begin{equation}\label{assumption}
 (d_{\beta} d^c_{\beta}\tau_\beta) (F)>0
 \end{equation} 
 for any hermitian $2$-form $F$.  Letting 
 \[\tilde \beta:=\beta + d\varphi, \qquad \varphi\in C^{\infty}(S), \]
 and using that 
 \[ e^{\varphi} d_{\tilde \beta} d^c_{\tilde \beta} (e^{-\varphi} \tau_{\beta}) = d_{\beta} d^c_{\beta} \tau_{\beta}, \]
(which follows from the identity $e^{-\varphi} d_{\tilde \beta} d^c_{\tilde \beta} e^{\varphi} F = d_{\beta}d^c_{\beta} F$ on forms) we see that  $\tau_{\tilde \beta} := e^{-\varphi} \tau_{\beta}$ satisfies  the assumption \eqref{assumption} with respect to $\tilde \beta \in b$. Furthermore,   $\tau_{\tilde \beta}$ has the same sign as  $\tau_{\beta}$. This gives rise to a conformal class of degree $0$ currents  
\[ [\tau_{\beta}]=\{\tau_{\beta} \, , \, \beta\in b\}, \]
satisfying the assumption of the Proposition.

\smallskip
By \cite{lcs}, there exists a Gauduchon hermitian form $F_0$ and a closed $1$-form $\alpha_0\in a$  such that $d_{\alpha_0} d^c_{\alpha_0} F_0=0$.
Let $F= e^f F_0$ and  $\alpha = \alpha_0 + df$. We then have
\[  d_{\alpha} d^c_{\alpha} F = e^f(d_{\alpha_0} d^c_{\alpha_0} (e^{-f}F))= e^f(d_{\alpha_0} d^c_{\alpha_0} F_0)=0.\]
Our assumption $a<b<0$ yields that $b = t_0a,  \, t_0\in (0, 1)$.  Letting  $\beta := t_0\alpha$, we compute as in \eqref{computation} 
 \begin{equation*}\label{computation-1}
\begin{split}
d_{\beta}d^c_{\beta} F = & \Big(d(J\theta_g -J\beta) + (\theta_g-\beta)\wedge J(\theta_g -\beta)\Big)\wedge F \\
                                     =& \frac{1}{2}\Big(\delta^g(\beta-\theta_g) + \big\langle(\beta-\theta_g), \beta\big\rangle_g\Big) F\wedge F \\
                                     =&\frac{t_0}{2}\Big(\delta^g(\alpha -\theta_g) +\frac{(t_0-1)}{t_0}(\delta^g\theta_g) + \big\langle(t_0\alpha-\theta_g), \alpha\big\rangle_g\Big) F\wedge F  \\
                                     =&\frac{t_0}{2}\Big(\delta^g(\alpha -\theta_g) + \big\langle \alpha -\theta_g, \alpha\big \rangle_g +\frac{(t_0-1)}{t_0}(\delta^g\theta_g)  + (t_0-1)|\alpha|^2_g  \Big)F\wedge F \\
                                     =&\frac{(t_0-1)}{2}\Big(\delta^g\theta_g  + t_0 |\alpha|^2_g  \Big)F\wedge F,
                                      \end{split}
                                     \end{equation*}
                                     where we have used the equivalence (which follows from the equality at the second line with $\alpha = \beta$) 
                                     \[ \delta^g(\alpha -\theta_g) + \big\langle \alpha -\theta_g, \alpha\big \rangle_g=0 \, \Longleftrightarrow \, d_{\alpha} d^c_{\alpha} F =0,\]
                                     for passing to the last line.
                                     Using that the hermitian metric $g$ is related to the Gauduchon metric by $g=e^f g_0$, so that  $\theta_g=\theta_{g_0}+ df$,  and $\alpha= \alpha_0 + df$, we have (see e.g. \cite[pp. 59]{Besse})
                                     \[ \begin{split}
                                    \delta^g\theta_g &=e^{-f}\Big(\delta^{g_0} (\theta_{g_0} + df) - \langle df, \theta_{g_0} + df \rangle_{g_0}\Big) \\
                                                               &= e^{-f}\Big(\Delta_{g_0} f - \langle df, \theta_{g_0} + df \rangle_{g_0}\Big), \\
                                    |\alpha|^2_g &=e^{-f} \langle \alpha_0 + df, \alpha_0 + df\rangle_{g_0}. \end{split}\]
Hence
$$d_\beta d_\beta^c F=     \frac{(t_0-1)}{2}\Big( \Delta_{g_0} f  + \big\langle 2t_0\alpha_0 - \theta_{g_0}, df \big\rangle_{g_0} + (t_0-1)|df|^2_{g_0} +t_0|\alpha_0|^2_{g_0}\Big) e^f F_0\wedge F_0.$$
                                
Making the substitution $u := e^{(1-t_0)f}$, we conclude
\begin{equation}\label{computation2}
\begin{split}
d_{\beta}d^c_{\beta} F &=  -\frac{1}{2}\Big(\Delta_{g_0} u + \big\langle 2t_0\alpha_0-\theta_{g_0}, du \big\rangle_{g_0} + t_0(1-t_0)|\alpha_0|_{g_0}^2 u \Big) e^{t_0f}\, F_0 \wedge F_0.
\end{split}\end{equation}
 Consider, for $0<t_0<1$, the linear differential operator,
 \begin{equation*}\label{P}
 P(\varphi) :=     \Delta_{g_0} \varphi + \big\langle 2t_0\alpha_0-\theta_{g_0}, d\varphi \big\rangle_{g_0} + t_0(1-t_0)|\alpha_0|^2_{g_0}\varphi, \qquad \varphi \in C^{\infty}(S). \end{equation*}  
As $\alpha_0\neq 0$,  by the strong maximum principle (see e.g. \cite[p.349]{Evans})  $P$ has a trivial kernel. As the index of  $P$ is zero, $P$ has also a trivial co-kernel. It then follows from standard linear elliptic theory that for any smooth function $w\in C^{\infty}(S)$, there exists a unique smooth function $u\in C^{\infty}(S)$ solving the PDE
\begin{equation}\label{PDE}
 P(u)= w. \end{equation}
We can thus choose $w\ge 0, \, w\not\equiv 0$ and denote by $u$ the corresponding solution of \eqref{PDE}. Evoking    the strong maximum principle again, it follows that $u>0$ on $S$.  This shows that there exists a smooth function $f$ such that  $F:= e^{f}F_0$ satisfies
\[ d_{\beta} d^c_{\beta} F = -\frac{1}{2} w  e^{t_0f} \, F_0 \wedge F_0, \qquad  w\ge 0. \]
where $\beta := t_0(\alpha_0 + df)$ and $\beta_0 := t_0 \alpha_0$. Considering the corresponding currents $\tau_{\beta}$  and $\tau_{\beta_0}$ in the conformal class,    we have by \eqref{assumption}:
\[ 0< T(F) =d_{\beta} d^c_{\beta} \tau_{\beta}(F) = \tau_{\beta}\left(d_{\beta}d^c_{\beta} F\right)= -\frac{1}{2} \tau_{\beta} \left(e^{t_0f} w\, F_0 \wedge F_0\right) = -\frac{1}{2} \tau_{\beta_0} \left(w F_0\wedge F_0\right). \]
As this inequality holds for any $w\ge 0, w \not\equiv 0$, we conclude that $\tau_{\beta_0} <0$.     \end{proof}

We have the following corollary of the above results.

\begin{thm} \label{thm:tau-connected} Let $S$ be a class ${\rm VII}$ surface.  Then $\tauc(S)$ is connected.
\end{thm}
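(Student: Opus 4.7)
The plan is to combine Propositions~\ref{p:conformal-dual}, \ref{p:sign} and~\ref{p:obstruction}(a) in succession to rule out any ``gap'' inside $\tauc(S)$. First, I dispose of the cases in which the minimal model of $S$ has vanishing second Betti number: by~\cite{andrei}, such an $S$ is obtained by blowing up either an Inoue--Bombieri surface, in which case $\tauc(S)=\{a\}$ by~\cite{lcs1}, or a Hopf surface, in which case $\tauc(S)=(-\infty,0)$ by~\cite{lcs,tsukada}; both sets are manifestly connected. We may therefore assume that the minimal model has positive second Betti number, so that Proposition~\ref{p:lcs-characterization} is available and $\tauc(S)\subset(-\infty,0)$ by Proposition~\ref{p:tau}.

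I then argue by contradiction. Suppose $\tauc(S)$ is disconnected, so there exist $a_1,a_2\in\tauc(S)$ and a gap point $b\in(-\infty,0)$ with $a_1<b<a_2$ and $b\notin\tauc(S)$. By the equivalence (i)$\Leftrightarrow$(iii) of Proposition~\ref{p:lcs-characterization}, the assumption $b\notin\tauc(S)$ entails that for some closed representative $\beta\in b$ there is no positive definite $(1,1)$-form $F$ satisfying $d_\beta d^c_\beta F=0$. Fixing such a $\beta$, Proposition~\ref{p:conformal-dual} then produces a nontrivial weakly positive $(1,1)$-current $T$ and a degree $0$ current $\tau$ on $S$ with $T=d_\beta d^c_\beta\tau$.

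The crux is to fix the sign of $\tau$. Since $a_1\in\tauc(S)$ satisfies $a_1<b<0$, the hypotheses of Proposition~\ref{p:sign} are fulfilled by the pair $(\beta,\tau)$, and we conclude that $\tau$ is weakly negative. With this sign in hand, the triple $(\beta,\tau,T)$ now meets the hypothesis of Proposition~\ref{p:obstruction}(a), yielding the inclusion $\tauc(S)\subset(-\infty,b)$. This directly contradicts $a_2\in\tauc(S)$ with $a_2>b$, so no such gap point can exist and $\tauc(S)$ must be connected.

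The main obstacle, in my view, is the consistency check that the degree $0$ current extracted from the Hahn--Banach argument underpinning Proposition~\ref{p:conformal-dual} is precisely the type of object handled by Propositions~\ref{p:sign} and~\ref{p:obstruction}(a)—that is, that we may work with a single pair $(\beta,\tau)$ throughout. This reduces to the conformal invariance $e^{\varphi}d_{\tilde\beta}d^c_{\tilde\beta}(e^{-\varphi}\tau)=d_\beta d^c_\beta\tau$ that these propositions already exploit, so once it is verified, the remainder of the proof is a short chain of implications from the preceding propositions.
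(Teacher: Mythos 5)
Your proof is correct and follows essentially the same route as the paper: reduce to the case $b_2(S_0)>0$, extract a degree-$0$ current $\tau$ with $d_\beta d^c_\beta\tau\ge 0$ nontrivial from Propositions~\ref{p:lcs-characterization} and~\ref{p:conformal-dual}, pin down its sign via Proposition~\ref{p:sign} using $a_1<b$, and conclude $\tauc(S)\subset(-\infty,b)$ by Proposition~\ref{p:obstruction}(a), contradicting $a_2>b$. The consistency point you flag about working with a single pair $(\beta,\tau)$ is indeed resolved by the conformal covariance already built into those propositions, exactly as you say.
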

\begin{proof}  Recall that $\tauc(S) \neq \emptyset$ by \cite{lcs}.  If  $\tauc(S)$ is not connected,  we can find  $a_1, a_2\in \tauc(S)$ and $b\notin \tauc(S)$ with $a_1<b< a_2$. If the minimal model $S_0$ of $S$ has zero second Betti number, by \cite{andrei} $S$ is either a Hopf surface or an Inoue--Bombieri surface. In these cases the set $\tauc(S)$ is known and connected by the results in \cite{lcs,lcs1}. We now assume $b_2(S_0)>0$. 
By Propositions~\ref{p:lcs-characterization} and \ref{p:conformal-dual}, there exists a  zero degree current $\tau$ with $T:=d_{\beta} d^c_{\beta} \tau \ge 0, \beta \in b$ and $T\neq 0$. By Proposition~\ref{p:sign},  $\tau \le 0$ and by Proposition~\ref{p:obstruction}-(a), $a_2<b$, a contradiction.
\end{proof}

\begin{proof}[\bf Proof of Theorem~\ref{thm:tau-structure}] This in an immediate corollary of \cite[Theorem~1.1, Proposition~4.3]{lcs} (which show that $\tauc(S)$ is a non-empty subset of $(-\infty, 0)$), \cite[Theorem~1.3]{lcs1} (which shows that $\tauc(S)$ is either open or a single point and characterizes the latter case) and Theorem~\ref{thm:tau-connected}. \end{proof}
 
 \begin{defn}\label{d:cyclic}   The {\it minimal $\Z$-cover} of a complex surface $S$ with $b_1(S)=1$ is defined as follows.  The torsion-free part $H_1(S, \Z)^{\rm free}$ of  $H_1(S, \Z)$ being $\Z$, $S$ admits an infinite cyclic cover $\hat \pi:  \hat S    \to S$  whose  fundamental group is the kernel of the morphism    
\[ \pi_1(S) \mapsto     \Big(\frac{\pi_1(S)}{[\pi_1(S), \pi_1(S)]}\Big)^{\rm free}.\]
By construction,  $\hat \pi^*\Big(H^1_{dR}(S, \R)\Big)= \{0\}$ and $\hat S$ can be characterized as the smallest $\Z$-cover of $S$ with that property.  
\end{defn}

We denote by $\gamma$ the deck transformation on $\hat S$ such that $S= \hat S/\langle \gamma \rangle$ and notice  the following simple fact.
\begin{lem}\label{l:cover} Let $S$ be a compact complex surface with $b_1(S)=1$ and $\hat S$ be its minimal $\Z$ cover with $S = \hat S/\langle \gamma \rangle$. Then $S$ admits a closed  $1$-form $\beta$ and a weakly negative (resp. weakly positive) current $\tau$ of degree $0$  such that $T:= d_{\beta}d^c_{\beta} \tau \ge 0$  if and only if $\hat S$ admits a negative (resp. non-negative) locally integrable plurisubharmonic function $\hat u$,  which is automorphic in the sense that
\begin{equation}\label{automorphy}
\hat u \circ \gamma = C \hat u
\end{equation}
for a positive constant $C$.
\end{lem}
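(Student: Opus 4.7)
The plan is to pass to the cyclic cover $\hat S$, where the twisted operator $d_\beta d^c_\beta$ becomes conjugate to the ordinary $dd^c$, and to match the sign and automorphy conditions on the two sides. By Definition~\ref{d:cyclic} the pullback map $\hat\pi^*\colon H^1_{dR}(S,\R)\to H^1_{dR}(\hat S,\R)$ is zero, so for any closed $\beta$ on $S$ the form $\hat\beta:=\hat\pi^*\beta$ is exact; choose $\hat f\in C^\infty(\hat S)$ with $d\hat f=\hat\beta$. As $\hat\beta$ is $\gamma$-invariant, there is a constant $c\in\R$ with $\hat f\circ\gamma=\hat f+c$. A direct check on forms (then extended to currents by duality) yields the conjugation identity
\[
d_{\hat\beta}\,d^c_{\hat\beta}\bigl(e^{\hat f}\, v\bigr)=e^{\hat f}\,dd^c v
\]
for every distribution $v$ on $\hat S$, which is the only algebraic input needed.

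For the direction $(\Rightarrow)$, given $(\beta,\tau)$ on $S$, lift to $\hat\tau:=\hat\pi^*\tau$ and set $\hat u:=e^{-\hat f}\hat\tau$. The identity above gives $e^{\hat f}\,dd^c\hat u=d_{\hat\beta}\,d^c_{\hat\beta}\hat\tau=\hat\pi^*(d_\beta d^c_\beta\tau)\ge 0$, so $\hat u$ is a distribution with positive $dd^c$-image. By the standard regularity theorem for plurisubharmonic currents of bidegree $(0,0)$ — a distribution $v$ with $dd^c v\ge 0$ agrees almost everywhere with a unique upper semi-continuous PSH function — the distribution $\hat u$ is represented by a locally integrable PSH function, of the same sign as $\tau$. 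The automorphy is immediate:
\[
\hat u\circ\gamma=e^{-\hat f\circ\gamma}(\hat\tau\circ\gamma)=e^{-c}\hat u,
\]
so \eqref{automorphy} holds with $C:=e^{-c}>0$.

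For the direction $(\Leftarrow)$, given $\hat u$ with $\hat u\circ\gamma=C\hat u$, set $c:=-\log C$ and pick $\hat f\in C^\infty(\hat S)$ satisfying $\hat f\circ\gamma=\hat f+c$. Such an $\hat f$ exists because $\hat\pi\colon\hat S\to S$ is a principal $\Z$-bundle: lift a smooth classifying map $S\to S^1=\R/\Z$ for the cover to a $\Z$-equivariant smooth map $\hat S\to\R$ and multiply by $c$. Then $d\hat f$ is $\gamma$-invariant and descends to a closed $1$-form $\beta$ on $S$. The distribution $\hat\tau:=e^{\hat f}\hat u$ satisfies $\hat\tau\circ\gamma=e^{\hat f+c}\,C\,\hat u=\hat\tau$, so it descends to a degree $0$ current $\tau$ on $S$ of the same sign as $\hat u$. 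The conjugation identity then gives $\hat\pi^*(d_\beta d^c_\beta\tau)=d_{\hat\beta}\,d^c_{\hat\beta}\hat\tau=e^{\hat f}dd^c\hat u\ge 0$, whence $d_\beta d^c_\beta\tau\ge 0$ on $S$.

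The only non-formal ingredient is the invoked regularity theorem for positive $dd^c$-currents, which is classical; the rest amounts to a direct translation between the twisted calculus downstairs and the ordinary pluripotential calculus upstairs, with the bookkeeping of the additive constant $c$ on $\hat f$ matching the multiplicative constant $C=e^{-c}$ on $\hat u$.
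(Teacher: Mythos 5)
Your proof follows the same route as the paper's: lift to $\hat S$, write $\hat\beta=d\hat f$ with $\hat f\circ\gamma=\hat f+c$, conjugate the twisted operator to $dd^c$ by multiplication by $e^{\pm\hat f}$, invoke the regularity of distributions with positive $dd^c$ to obtain a genuine $L^1_{loc}$ PSH representative, and read off the automorphy constant; the converse direction is the same translation run backwards. The one discrepancy is a sign. The operator $d_{\beta}d^c_{\beta}$ on degree-$0$ currents is defined by duality, $(d_{\beta}d^c_{\beta}\tau)(F)=\tau(d_{\beta}d^c_{\beta}F)$, and under the identification of $\tau$ with a function this intertwines with the \emph{form-level} operator $d_{-\beta}d^c_{-\beta}=e^{-f}\,dd^c\,e^{f}$ (locally, $\beta=df$); hence on the cover the relevant identity is $d_{\hat\beta}d^c_{\hat\beta}\hat\tau=e^{-\hat f}dd^c\bigl(e^{+\hat f}\hat\tau\bigr)$, the PSH function is $\hat u=e^{+\hat f}\hat\tau$, and $C=e^{c}$ rather than $e^{-c}$. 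Your identity $d_{\hat\beta}d^c_{\hat\beta}(e^{\hat f}v)=e^{\hat f}dd^c v$ is correct for smooth functions, but it picks up the flip $\hat f\mapsto-\hat f$ when ``extended to currents by duality'', which is exactly the step you gloss over. This does not affect the lemma as stated, since only $C>0$ is asserted, but the normalization $C=e^{c}$ is the one under which $C>1$ corresponds to $b<0$ in Remark~\ref{r:convention} and Proposition~\ref{p:main}, so the sign should be fixed. Finally, in the weakly negative case you should add a word (as the paper does) that a non-positive, not identically zero PSH function is everywhere negative by the maximum principle, so that $\hat u$ is genuinely negative and not merely $\le 0$.
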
   
\begin{proof} Suppose first $\beta$ is a closed $1$-form  and $\tau$ a  degree $0$ current on $S$ satisfying $T:= d_{\beta}d^c_{\beta} \tau \ge 0$. We can think of $\tau$ as a function in $L^1_{loc}(S)$ and 
denote by $\hat \tau$ the lifted function to $\hat S$. This gives rise to a degree $0$ current on $\hat S$  such that $\gamma^*(\hat \tau)=\hat \tau$. Let $\hat \beta = \hat\pi^*(\beta)$ be  the lift of the closed $1$-form $\beta$. We then define $\hat T := d_{\hat \beta}d^c_{\hat \beta} \hat \tau$. It is not hard to see that $\hat T$  is a positive current on $\hat S$ (as $T=d_{\beta}d^c_{\beta} \tau \ge0$ on $S$). We can write  $\hat \beta =  d\hat f$ on $\hat S$ for a smooth function satisfying 
\[ \hat f \circ \gamma - \hat f = c\]
for some real constant $c$. 
It then follows that the degree $0$  current $e^{\hat f} \hat \tau$  on $\hat S$  satisfies 
\[ dd^c  \big(e^{\hat f} \hat \tau\big)= e^{\hat f} \big(d_{\hat \beta} d^c_{\hat \beta} \hat \tau\big)  = e^{\hat f}\hat T \ge 0\]
It follows that (see e.g. \cite[Ch.I, (5.5)]{demailly-book} ) $e^{\hat f} \hat \tau$ defines a negative (resp. non-positive) PSH function $\hat u \in L^1_{loc}(\hat S)$ (where we have used the maximum principle to rule out zeroes of $\hat u$ in the non-positive case) which has the property $\hat u \circ \gamma = e^c \hat u$.

\smallskip
Conversely, suppose $\hat u$ is a negative (resp. non-negative) PSH function on $\hat S$ satisfying $\hat u \circ \gamma = e^c \hat u$ for some real constant $c$. As $H^1_{dR}(S, \R) \neq \{0\}$,  there exists a closed but not exact $1$-form $\beta$  whose lift on $\hat S$ satisfies $\hat \beta=d\hat f $ for a smooth function $\hat f$ such that $\hat f \circ \gamma =\hat f+c_0$, $c_0\neq 0$. Multiplying $\hat f$  by a suitable constant if necessary, we may suppose that $c_0=c$.  We then consider the degree zero current  $\hat \tau := e^{-\hat f}\hat u$ on $\hat S$ 
 and check as before that
\[ d_{\hat \beta} d^c_{\hat \beta} \hat \tau = e^{-\hat f} dd^c \hat u \ge 0,\]
where $\hat \beta = d\hat f$.   Furthermore, as $e^{-\hat f}\hat u$ is $\gamma$-invariant and negative (resp. non-negative), there exists a degree zero weakly negative (resp. weakly positive) current $\tau$ on $S$ such that $\hat \tau$ is the pull-back of $\tau$ to $\hat S$.   Using that $\gamma^*(\hat \beta)=\hat \beta$  we conclude that $\tau$ satisfies
$d_{\beta} d^c_{\beta} \tau  \ge 0$ on $S$ with respect to the induced $1$-form $\beta$ from $\hat \beta$. \end{proof}   

\begin{rem}\label{r:convention} For any closed $1$-form $\alpha$ on $S$, the lift $\hat \alpha$ to $\hat S$ is exact, and  $\hat \alpha = d\hat f$ for a smooth function $\hat f$ satisfying 
\[ \hat f \circ \gamma - \hat f = c\]
for some real constant $c$.  The constant $c=c_a$ is independent of the choice of $\alpha$ in a given deRham class $a\in H^1_{dR}(S, \R)$,  being zero precisely when $\alpha$ is exact.  It follows that the positive constant $C_a = e^{c_a}$  is  $1$ iff $a=0$.

There is a $2$-fold choice for the generator $\gamma$: we  can  replace $\gamma$ by  $\gamma^{-1}$, which  changes $C_a$ with $1/C_a$. To fix this ambiguity, in this paper we  assume that the generator $\gamma$ is choses so as $C_a>1$ iff $a<0$. As a corollary of Proposition~\ref{p:obstruction}(a),  the constant $C=C_b$ in Lemma~\ref{l:cover} must satisfy $C>1$ (this is equivalent to the inequality $b<0$).

Furthermore, using that the function $f(x)= -(-x)^t$ is increasing and convex on $(-\infty, 0)$ for any real  number $t\in (0,1)$, we conclude that $-\big(-\hat u\big)^{t}$ is also a negative PSH function on $\hat S$ which satisfies \eqref{automorphy} with constant $C^t <C$; it thus follows that if $b$ satisfies the hypothesis of Proposition~\ref{p:obstruction}(a), then any $b' \in (b, 0)$ does too. 
\end{rem}

\begin{rem}\label{convention1}  Related to Remark~\ref{r:convention} is the following useful observation. On any compact complex surfaces satisfying $b_1(S)=1$ and ${\rm kod}(S)=-\infty$  we have the identification (see \cite{kodaira})
\begin{equation}\label{Pic-identification}
H^1_0(S, \C^{*}) = {\rm Pic}^0(S) \cong \C^*,
\end{equation}
compare with \eqref{H1}. 
We can then write for an isomorphism class of flat bundles $\cL\in  H^1_0(S, \C^{*})$ 
\[ \cL=\cL_\mu, \qquad \mu \in \C^*.\]
 In terms of the sequence \eqref{H1}, for any $a\in H^1_{dR}(S, \R)$, the corresponding flat complex bundle $\cL_a = L_{a}\otimes \C$ can be then written as $\cL_{C_a}$, where $C_a>0$ is the constant introduced according to the convention in Remark~\ref{r:convention}. We thus have that $\cL_{\mu} = \cL_{a}$ with $a\in (-\infty, 0)$ iff $\mu \in \R, \mu>1$, an identification that we will use throughout the paper.
\end{rem}

\section{Examples}\label{s:examples} In this section, we examine the existence of degree $0$ currents as in Proposition~\ref{p:obstruction} on the known surfaces in the Kodaira class {\rm VII} (see \cite{bpv}), i.e. the compact complex surfaces satisfying $b_1(S)=1$ and ${\rm kod}(S)=-\infty$.  Notice that the existence of such currents is invariant under blow-up and blow-down of points, so we shall restrict attention to the minimal case.  Minimal complex surfaces in the class VII are  referred to as \emph{class {\rm VII}$_0$ complex surfaces}.

We  first recall the  rough classification of the known minimal complex surfaces of Kodaira class VII.

\subsection{Known examples of class VII$_0$ complex surfaces}
\begin{defn}\label{VII} A {\it known} minimal complex surface in the Kodaira class {\rm VII}$_0$  is one of the following
\begin{enumerate}
\item[$\bullet$] An Inoue--Bombieri surface~\cite{I74} ($b_2(S)=0$ and $S$ admits no curves);
\item[$\bullet$] A Hopf surface~\cite[II, section 10]{kodaira} and \cite{kato-hopf} ($b_2(S)=0$ and $S$ admits curves);
\item[$\bullet$] A Kato surface~\cite{K77,D84}($b_2(S)>0$ and $S$ admits a global spherical shell).
\end{enumerate}
Following \cite{D84,Brunella14}, we shall further distinguish between two different types of Kato surfaces
\begin{enumerate}
\item[$\bullet$] Enoki surfaces~\cite{enoki}, and
\item[$\bullet$] hyperbolic Kato surfaces~\cite{Brunella14}.
\end{enumerate}
\end{defn}
If $b_2(S)=0$ there are no other surfaces than the Inoue-Bombieri and Hopf surfaces listed above~\cite{andrei}. The   minimal surfaces in the class VII are said to belong in the class ${\rm VII}_0$ whereas those with $b_2(S)>0$  are said to belong to the class VII$_0^+$. 
According to the still open  GSS conjecture (proved in \cite{andrei1} when $b_2(S)=1$) any complex surface in the class VII$_0^+$ must be a Kato surface.

\subsection{Definite  currents on the known class ${\rm VII}_0$ surfaces}
We prove with the following 
\begin{prop}\label{bounds-examples} Suppose $S$ is  a known minimal surface of the Kodaira class {\rm VII}$_0$. Then, 
\begin{enumerate}
\item[\rm (a)] $S$ admits a weakly negative current $\tau \le 0$ with $T=d_{\beta}d^c_{\beta} \tau \ge 0$ and $T\neq 0$  for some closed $1$-form $\beta$ if and only if  $S$ is either an Inoue--Bombieri surface, or a hyperbolic Kato surface;
\item[\rm (b)]  $S$ admits a weakly positive current $\tau\ge 0$ with $T=d_{\delta} d^c_{\delta} \tau \ge 0$ and $T\neq 0$ for some $\delta \in d\in (-\infty, 0)\subset H^1_{dR}(S, \R)$  if and only if $S$ is an Inoue--Bombieri surface.
\end{enumerate}
\end{prop}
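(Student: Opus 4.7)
The plan is to examine the four classes of known minimal class {\rm VII}$_0$ surfaces in turn, translating each assertion via Lemma~\ref{l:cover} into an existence or non-existence statement about a negative (resp.\ non-negative) locally integrable PSH function $\hat u$ on the minimal $\mathbb{Z}$-cover $\hat S$, automorphic with constant $C>1$ (corresponding to a negative deRham class by the convention of Remark~\ref{r:convention}).

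The \emph{non-existence} cases reduce mostly to the forward implication of Proposition~\ref{p:obstruction} together with prior knowledge of $\tauc(S)$. For Hopf surfaces, both parts follow from $\tauc(S)=(-\infty,0)$ established in~\cite{lcs,tsukada}: a negative current for some $b<0$ would force $\tauc(S)\subset(-\infty,b)$, while a non-negative current for some $d<0$ would force $\tauc(S)\subset(d,0)$, both contradictions. An analogous reasoning rules out part~(b) for Enoki and hyperbolic Kato surfaces by invoking Brunella's result~\cite{B1} that $\tauc(S)$ is unbounded from below in those cases. For the \emph{existence} half on Inoue--Bombieri surfaces (both parts), I would use the explicit description of the universal cover as $\mathbb{H}\times\mathbb{C}$ for the $S_M$ type (with analogous models for $S^{\pm}$): the $\mathbb{Z}$-generator of the deck group acts by multiplication by some $a>1$ on $y_1:=\Im(z_1)$, so the function $\hat u=-y_1^s$ with $s\in(0,1)$ is negative PSH (since $y^s$ is superharmonic in that range) with $C=a^s\in(1,a)$, while $\hat u=y_1^s$ with $s\geq 1$ is positive PSH with $C=a^s>1$, yielding respectively parts (a) and (b). For hyperbolic Kato part~(a), I would construct a negative strictly PSH function on the universal cover from the contracting holomorphic germ defining the Kato surface, in the spirit of Brunella's PSH functions with analytic singularities~\cite{Brunella13,Brunella14}.

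The main obstacle is the non-existence assertion of part~(a) on Enoki surfaces, which is not directly available from the literature. The strategy exploits the distinctive feature of an Enoki surface: it contains an elliptic curve $E\subset S$ whose class in $H_1(S,\mathbb{Z})^{\rm free}\cong\mathbb{Z}$ is non-trivial, so its preimage $\hat E\subset\hat S$ is a connected cover of $E$ biholomorphic to a cylinder $\mathbb{C}/\mathbb{Z}$. Restricting a hypothetical weakly negative automorphic PSH function $\hat u$ to $\hat E$ yields a subharmonic function bounded above by $0$; lifting to $\mathbb{C}$ we obtain a periodic subharmonic function bounded above, which must be constant by the classical Liouville theorem for subharmonic functions on $\mathbb{R}^2$. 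Hence $\hat u|_{\hat E}\equiv c\le 0$, and the automorphy relation $\hat u\circ\gamma=C\hat u$ with $C>1$ forces $c=0$. Then $\hat u$ attains its supremum value $0$ along $\hat E$, and the strong maximum principle for PSH functions on the connected complex manifold $\hat S$ yields $\hat u\equiv 0$, contradicting the required non-triviality of $T=d_\beta d^c_\beta\tau$.
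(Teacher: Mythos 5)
Your overall strategy---translating the statements via Lemma~\ref{l:cover} and Proposition~\ref{p:obstruction}, treating the four classes case by case, and using powers of a positive pluriharmonic automorphic function on $\bb H\times\C$ for the Inoue--Bombieri existence claims---is essentially the paper's proof. (The paper rules out Hopf surfaces in part~(a) by a Liouville argument for non-positive PSH functions on the universal cover $\C^2\setminus\{0\}$ rather than by quoting $\tauc(S)=(-\infty,0)$, but your version is equally valid and non-circular, since the forward direction of Proposition~\ref{p:obstruction}(a) is proved independently of the present statement.)

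The genuine gap is your treatment of the Enoki case in part~(a). A general Enoki surface does \emph{not} contain an elliptic curve: as recalled in Lemma~\ref{l:Enoki}, the defining contraction is $F(z_1,z_2)=(t^nz_1z_2^n+\sum_i a_it^{i+1}z_2^{i+1},\,tz_2)$, and the elliptic curve $\{z_1=0\}$ survives in the quotient only when all coefficients $a_i$ vanish (the parabolic Inoue sub-family); otherwise the maximal divisor is just the cycle of rational curves $D_r$ and $S$ carries no elliptic curve at all. So the object on which your argument hinges is absent in the generic case. (Even where $E$ does exist there is a secondary issue: the restriction of the $L^1_{\rm loc}$ PSH function $\hat u$ to $\hat E$ could be identically $-\infty$, in which case your Liouville/maximum-principle chain does not start.) The paper's argument uses a structure present on \emph{every} Enoki surface: the complement $S\setminus D_r$ of the cycle of rational curves admits a covering biholomorphic to $\C^*\times\C$, hence a further covering by $\C^2$; a non-positive PSH function pulled back to $\C^2$ is constant by Liouville, so $\hat u$ is constant on the dense open set $\hat\pi^{-1}(S\setminus D_r)$, hence constant everywhere, forcing $T=0$. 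Replacing your elliptic-curve step by this argument closes the gap; the rest of your proposal, including the deferred explicit construction of negative automorphic PSH functions on the covers of hyperbolic Kato surfaces from the contracting germ, matches what the paper does in Section~\ref{s:explicit-bound}.
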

\begin{proof} We start with showing nexessary part of (a): to this end, we need to prove that Hopf and Enoki surfaces do not admit a degree $0$ current $\tau$ as in the case (a) of the Proposition.  If  a degree zero current $\tau \le 0$ existed on a Hopf surface $S$, it would define a non-positive PSH function  $\hat u$ on the universal cover $\C^2\setminus\{0\}$ by Lemma~\ref{l:cover}. As $\C^* \times \C \subset \C^2\setminus\{0\}$ is covered by $\C^2$, by Liouville theorem we must have that $\hat u$ is constant, and thus $T=0$, a contradiction.  The case of Enoki surfaces can be treated similarly: we notice that such a surface contains a cycle of rational curves whose complement has a covering  biholomorphic to $\C^*\times \C$ (see~\cite{enoki}, \cite[Corollary~1.20]{DK98}). 


\smallskip
We now show (b).  To this end, we need to check that a Hopf or a Kato surface do not admit a degree $0$ current  as in the statement. We shall argue by absurd, using Proposition~\ref{p:obstruction}~(b). In the case of a Hopf surface $S$, we have by \cite[Theorem~1.4]{lcs}  that $\tauc(S)=(-\infty, 0)$ so we conclude by  Proposition~\ref{p:obstruction}~(b). Similarly, if $S$ is a Kato surface,  Brunella~\cite{B1,B2} shows that  $\tauc(S)$ contains an interval $(-\infty, c)$ for some $c<0$, so we conclude again by Proposition~\ref{p:obstruction}~(b).

\smallskip
We finally  address the existence part. Notice that Inoue--Bombieri surfaces admit a closed $1$-form $\alpha$ with $d_{-\alpha}d^{c}_{-\alpha} (1)=0$  and $a:=[\alpha] \in (-\infty, 0) \subset H^1_{dR}(S, \R)$ (see e.g. \cite{lcs1}). It follows that the integration over $S$ is a positive degree $0$ current on $S$ which is $d_{\alpha}d^c_{\alpha}$-closed. By the proof of Lemma~\ref{l:cover}, it gives rise to  a strictly positive PH function $\hat u=e^{-f}$, where $\hat\beta=df$,  on the minimal $\Z$-cover $\hat S$, which is automorphic with constant $C_0>1$. Taking $(\hat u)^t, t>1$ and $-(\hat u)^{t}, t\in (0,1)$ we see that for any $C_0<C$ there exists a positive strictly PSH automorphic function on $\hat S$ with automorphy constant $C$, and for any $1<C'<C_0$,  there exists a negative automorphic strictly PSH function on $\hat S$ with automorphy constant $C'$. Equivalently, for any $\delta \in d<a=[\alpha]$, there exists a weakly positive degree $0$ current on $S$ satisfying  the hypothesis of (b), and for any $\beta \in b>a$, there exists a current  of degree $0$ on $S$ satisfying the hypothesis of (a). Proposition~\ref{p:obstruction} is then consistent with the fact that in this case $\tauc(S)=\{a\}$, see \cite{lcs1}. The existence of negative PSH functions on the $\Z$-cover of a hyperbolic Kato surface is treated in detail in the next subsection. \end{proof}

\begin{proof}[\bf Proof of Theorem~\ref{main}]  $\tauc(S)$ is a non-empty  subset of $(-\infty, 0)$ by \cite[Theorem~1.1, Proposition~4.3]{lcs}, see  also Proposition~\ref{p:tau} above. Part (a) is established in \cite[Theorem~1.3]{lcs1};  for part (b) see e.g. \cite[Theorem~1.4]{lcs} and \cite{tsukada}. In the cases (c) and (d), $\tauc(S)$ is open by \cite[Theorem~1.3]{lcs1}  and connected by Theorem~\ref{thm:tau-connected}. The lower bound follows from  \cite{B1,B2} and the upper bounds in each case follow from Proposition~\ref{bounds-examples} above. \end{proof}

\begin{proof}[\bf Proof of Corollary~\ref{c:Cheeger-Gromov}] Let $\tilde J_i : = \phi_i(J)$ be the integrable almost complex structure on $M_k$ obtained by the pull-back with  a  diffeomorphism $\phi_i \in {\rm Diff}_0(S)$, and denote by $S_i:=(M_k, \tilde J_i)\cong S$ the corresponding complex surface.  As $\phi_i$ acts trivially on $H^1_{dR}(M_k, \R)$, we have by Theorem~\ref{main} $\tauc(S_i) = \tauc(S) = (-\infty, b) \subset H_{dR}^1(M_k, \R)$. By assumption, $\tilde J_i$ converges in $C^{\infty}$ to a complex structure $J_{\infty}$: it then follows that any LCS structure $\omega$ taming $J_{\infty}$ will also tame $\tilde J_i$ for $i$ large enough. This  and  $\tauc(S_i) = \tauc(S)$ show that $\tau(S_{\infty}) \subset \tauc(S)= (-\infty, b)$. Applying Theorem~\ref{main} again, we conclude that $S_{\infty}$ must be a hyperbolic Kato surface. \end{proof}

\subsection{An explicit upper bound of the Lee classes of a hyperbolic Kato surface}\label{s:explicit-bound} In this section, we show the existence of  an upper bound $b=b(S)<0$ of $\tauc(S)$ given by Proposition~\ref{p:obstruction} for a hyperbolic Kato surface $S$. Recall that a Kato surface $S$ is   a minimal compact complex surface in the class VII with $b_2(S)>0$ and containing a GSS. Such a surface is obtained by the following general construction. Given
 \begin{itemize}
 \item a composition $\pi=\pi_0\circ\cdots\circ\pi_{n-1}:W\to \bb C^2$ of $n=b_2(S)$ blow-ups over the origin of $\bb C^2$, in such a way that $\pi_0$ blows up the origin of $\bb C^2$, $C_0=\pi_0^{-1}(0)$ and $\pi_i$ blows up a point $O_{i-1}\in C_{i-1}$ for any $i\ge 1$, and
 \item a biholomorphism $\s:\bar B\to W$ from a neighbourhood of the closed ball to $W$ such that $\s(\bar B)\subset \pi^{-1}(B)$ and $\s(0)\in C_{n-1}$,
 \end{itemize}
 we remove $\s(B)$ from $\pi^{-1}(\bar B)$.  We obtain an annulus with a boundary composed of two spheres $\pi^{-1}(S^3)$ and $\s(S^3)$ where  $S^3=\part B$. The surface $S=S(\pi,\s)$ is obtained by glueing by $\s\circ\pi$  isomorphic neighbourhoods of these two spheres (see \cite{D84} for details). The resulting smooth manifold is diffeomorphic to $(S^1\times S^3)\sharp k{\overline {\C P^2}}$ and the minimal $\Z$-cover $\hat S$ of $S$ (see Definition~\ref{d:cyclic}) coincides with the universal cover.  Let
 $$F=\pi\circ\s:(\bb C^2,0)\to (\bb C^2,0)$$
 be the associated germ. The order of creation of  rational curves induced by the successive blow-ups produces a total order on the rational curves in $\hat S$.  Given one of these curves, say $C$, it is possible to collapse all the curves $C'>C$ to a point $\hat O_C$ of $C$. Doing this, $C$ becomes an exceptional curve of the first kind and we denote by $\hat S_C$ the resulting manifold and by $p_C:\hat S\to \hat S_C$ this collapsing. Notice that $p_C$ is not proper, and induces by restriction an isomorphism 
$$p_C:\hat S\setminus \bigcup_{C'>C} C'\to \hat S_C\setminus\{\hat O_C\}.$$
We shall use next the following fact taken from \cite[Proposition 3.9]{D84}. Let  $\gamma: \hat  S\to \hat S$ be the generator of $\pi_1(S)\cong \Z$.  Then,  for any curve $C$ in $\hat S$ as above, we have the following commutative diagram
\begin{diagram}
& {\hat S} & \rTo^{\gamma}&  {\hat S} \\
& \dTo^{p_C} &                 & \dTo_{p_C} \\
&{\hat S}_C & \rTo^{F_{C}} & {\hat S}_C
\end{diagram}
Moreover, if  $\hat O_C$ is the critical value of $p_C$ (i.e. the image of all the rational curves $C'>C$ with notations of \cite{D84}),  we  can identify the germ $(\hat S_C,\hat O_C)$ with $(\bb C^2,0)$ in such a way that the equation of $C$ at $\hat O_C$ is $C=\{z_2=0\}$.
\begin{lem} \label{pshZrev} Let $S$ be a  Kato  surface defined by a germ 
$$F=\pi\circ\s:(\bb C^2,0)\to (\bb C^2,0)$$
 and $v_C: (\hat S_C,\hat O_C)\to \R \cup \{-\infty\}$  a germ of PSH function such that $F_C^* (v_C)=v_C+c$ for a constant $c\in\R$. Then,  there exists on $\hat S$ a unique PSH function $\hat v :\hat  S\to \R \cup \{-\infty\}$ satisfying $\gamma^* \hat v =\hat v + c$, and such that  if $v_C$ is defined in a neighbourhood $V$ of $\hat O_C$,  then on $p_C^{-1}(V)$ we have 
$$v_C\circ p_C={\hat v}.$$
\end{lem}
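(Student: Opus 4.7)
The plan is to build $\hat v$ in three stages: a local pullback $v_C \circ p_C$ on a $\gamma$-invariant neighbourhood of the collapsed curves, an extension to the $\gamma$-orbit of this neighbourhood using the prescribed automorphy, and a verification that this orbit exhausts $\hat S$. First I would shrink $V$ if necessary so that $F_C(V) \subset V$ --- this is possible since $F_C$ is a contracting germ at $\hat O_C$ --- and set $U := p_C^{-1}(V)$, which by the commutative diagram $p_C \circ \gamma = F_C \circ p_C$ satisfies $\gamma(U) \subset U$. Define $\hat v := v_C \circ p_C$ on $U$; this is PSH as the pullback of a PSH function by the holomorphic map $p_C$. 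The same commutative diagram together with the functional equation $F_C^*(v_C) = v_C + c$ yields on $U$
\[\hat v \circ \gamma = v_C \circ p_C \circ \gamma = v_C \circ F_C \circ p_C = (v_C + c) \circ p_C = \hat v + c,\]
so the required automorphy relation holds automatically in this initial domain.

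Next I would extend $\hat v$ to $\mathcal U := \bigcup_{k \in \Z} \gamma^k(U)$ by the formula $\hat v(\gamma^k x) := \hat v(x) + kc$ for $x \in U$ and $k \in \Z$. Using the nested inclusions $\cdots \subset \gamma(U) \subset U \subset \gamma^{-1}(U) \subset \cdots$ together with iterations of the automorphy established on $U$, one checks that the formula is consistent on the overlaps $\gamma^k(U) \cap \gamma^{k'}(U)$. Since on each $\gamma^k(U)$ the extended $\hat v$ is a pullback of $v_C$ by a holomorphic map up to an additive constant, $\hat v$ is PSH on $\mathcal U$. Uniqueness follows at once: any two candidates agree on $U$ by hypothesis and are both $c$-automorphic, so they must agree throughout $\mathcal U$.

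The hard step will be showing $\mathcal U = \hat S$. This is where the specific iterative structure of the Kato surface recalled just before the lemma enters: $\hat S$ is obtained by gluing countably many copies of the resolution $W$ via the germ $F = \pi \circ \sigma$, with $\gamma$ acting as the shift. One shows that the increasing union $\bigcup_{k \ge 0} \gamma^{-k}(U)$ picks up successive copies of $W$, so that every point of $\hat S$ eventually belongs to some $\gamma^{-k}(U)$. Equivalently, using that $F_C$ is a contraction at $\hat O_C$, one verifies that every point of $\hat S_C$ lies in the forward basin of $\hat O_C$: iterated application of $F_C$ brings every point of $\hat S_C$ into $V$ after finitely many steps. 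This allows one to extend $v_C$ to all of $\hat S_C$ by $v_C(z) := v_C(F_C^k(z)) - kc$ for $k$ large enough, and $\hat v := v_C \circ p_C$ is then the globally defined PSH function on $\hat S$ with the required properties.
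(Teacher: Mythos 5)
Your proposal is correct and follows essentially the same route as the paper: the paper defines $\hat v = v_C\circ p_C$ on $p_C^{-1}(V)$, which contains the pseudo-concave end of $\hat S$, and then propagates it backwards through the annulus decomposition $\hat S=\bigcup_{i\in\Z}A_i$ via $\hat v(x):=\hat v(\gamma(x))-c$, which is exactly your extension $\hat v(\gamma^{-k}x)=\hat v(x)-kc$ over $\bigcup_{k\ge 0}\gamma^{-k}(U)$. Your "hard step" that this union exhausts $\hat S$ is precisely what the paper settles by invoking the fundamental-domain structure $\gamma(A_{i-1})=A_i$ from \cite{D84}, so the two arguments coincide.
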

\begin{proof}The inverse image $p_C^{-1}(V)$ contains the pseudo-concave end of  the universal covering space $\hat S$ (in the terminology of \cite{D84}).  Notice that $\hat  S=\bigcup_{i\in\bb Z}A_i$ where $A_i$ is a copy of a fundamental domain isomorphic to an annulus $\overline{\Pi^{-1}(B)}\setminus \s(B)$. We have $\gamma (A_{i-1})=A_i$ for every $i\in\bb Z$. If $A_i\subset p_C^{-1}(V)$  we define $\hat v$ on $A_{i-1}$ by $\hat v(x):=\hat v(\gamma(x))-c$. \end{proof}

We now recall that the hyperbolic Kato surfaces are either Kato surfaces of intermediate type or  Inoue--Hirzebruch surfaces which we will treat in the next two examples, respectively.

\begin{example}~\label{ex:Kato-intermediate} Let $S$ be a Kato surface of intermediate type, i.e. a Kato surface  which contains a cycle of rational curves with branches. A defining germ of $S$ is  given by (see \cite{Fav00})
\begin{equation}\label{germ-fabre}
F(z_1,z_2)= (\l_1 z_1z_2^s+P(z_2)+\lambda_2z_2^{\frac{sk}{k-1}},z_2^k), 
\end{equation}
where  $\l_1\in \C^*,  \lambda_2\in \C, s, k\in \N^*, k\ge 2$,  $\frac{s}{(k-1)}\in \N$ when $\lambda_2\neq 0$,   and  $P(z)$ is a certain polynomial  with complex coefficients and  $\deg(P)\le s$. The maximal divisor $D$ of $S$ is defined by the equation $z_2=0$.
For $|z_2]<1$,  the function $$ v(z_1,z_2):=-\log(-\log|z_2])$$
 is PSH,  and satisfies 
 $$F^* v= v-\log(k).$$
By Lemma~\ref{pshZrev}, $v$ gives rise to a  PSH  function $\hat v$ on $\hat S$,  satisfying $\gamma^* \hat v = \hat v -\log(k)$. Define $\hat u := -\exp(-\hat v) = \log|z_2| < 0$. It follows that $\hat u$ is negative PSH satisfying
\[\gamma^* \hat u = k \hat u.\]
The fact that $k>1$ tels us that the choice for the generator is consistent with the  convention of Remark~\ref{r:convention}.
Furthermore, the support of $\hat u$ is on $D=\{z_2=0\}$. It follows that in this case, the upper bound $b\in (-\infty, 0)$ of Proposition~\ref{p:obstruction} is given by the deRham class corresponding to the topologically trivial flat line bundle defined by $k\in \N^*$.
\end{example}
\begin{example}\label{ex:Inoue-Hirzebruch} Let $S$ be an Inoue--Hirzebruch  Kato surface. In this case, the contraction germ defining $S$ is given by  (see  \cite[p.667]{Dl88})
 $$F(z_1,z_2)=(z_1^pz_2^q,z_1^rz_2^s)$$ 
 where  $A=\left(\begin{array}{cc}p&r\\q&s\end{array}\right) \in {\bf GL}(2, \Z)$  has  non-negative coefficients and $\det A=\pm 1$. Furthermore, the characteristic polynomial
$$P_A(X)=X^2-(p+s)X+\det A$$
has two eigenvalues $\l_1$, $\l_2$ satisfying $0<|\l_2|<1<\l_1$, $\l_2=\pm \l_1^{-1}$. Each eigenvalue $\l_i$ ($i=1,2$) has an eigenvector $(\xi_i,\eta_i)$ with $\xi_1>0, \eta_1>0$  and $\xi_2>0, \eta_2<0$. We next  let $(\xi, \eta):=(\xi_1, \eta_1)$,  $\lambda := \lambda_1>1$,  and 
$$G(z_1,z_2):=\xi\log|z_1|+\eta\log|z_2|.$$
We have that $G\circ F(z)=\l G(z)$ and  $\Omega:=\{(z_1, z_2) \in \C^2 : G(z_1, z_2)<0\}$  contains a neighbourhood of $0\in\bb C^2$. We then define  (on $\Omega$)  the PSH function $v(z):=-\log(-G(z))$ which satisfies $v\circ F =v +\log \l$ and hence, by  Lemma~\ref{pshZrev}, gives rise to a PSH function $\hat v$ on $\hat S$ satisfying $\gamma^*\hat v = \hat v + \log \l$. Letting $\hat u := -\exp(-\hat v)=G(z)$ we get a negative PSH function on $\hat S$ satisfying 
$$\gamma^* \hat u = \lambda \hat u.$$
It follows in this case too that  the upper bound $b\in (-\infty, 0)$ of Proposition~\ref{p:obstruction}  corresponds to the  constant $C=C_b=\lambda >1$, see Remark~\ref{r:convention}.
\end{example}

\subsection{An obstruction to the existence of bi-hermitian structures  on intermediate Kato surfaces} By  a {\it bi-hermitian structure}  on a connected compact complex surface $S=(M, J)$ we mean a conformal class $\mathsf{c}$ of hermitian metrics  such that there exists another integrable complex structure $I$ on $M$ which is positively oriented, $\mathsf{c}$-orthogonal, and $I(x) \neq \pm J(x)$ at at least one point of $S$. The main interest in bi-hermitian structures comes from their link with generalized K\"ahler geometry revealed in \cite{gualtieri}. When $b_1(S)$ is even, i.e. on  a K\"ahler complex surface,  it follows by the results in \cite{AGG,Goto} the existence of a bi-hermitian structure  is equivalent to the condition $H^0(S, K^{*}_S) \neq 0$ where $K_S$ stands for the canonical bundle and $K_{S}^*$ denotes its dual. On the other hand, in the case $b_1(S)$ odd, the situation is not completely understood yet.  It is known from~\cite{apostolov}  that  in this case $b_1(S)=1,  {\rm kod}(S)=-\infty$,  and  $H^0(S, K^*_{S}\otimes {\mathcal L}_{a}) \neq 0$ for some flat holomorphic line bundle $\mathcal{L}_{a}$ corresponding to a class  $a\in (-\infty, 0] \subset H^1_{dR}(S, \R)$ via \eqref{H1}. It was observed  in \cite{Dl,AD} that  the latter conditions imply that $S$ must be obtained by blowing up points of either a Hopf surface or a Kato surface of index 1 (or NAC  of index $1$ in the terminology of \cite{Dl}).  Since the existence of bi-hermitian structures is stable under blow-ups preserving  the  index $1$ NAC condition~\cite{CG}, one can concentrate on the minimal case, and one situation in which the existence of bi-hermitian structures remains unknown~\cite{FP19} is the case when $S$ is a Kato surface of intermediate type, as described in Example~\ref{ex:Kato-intermediate} above. In this case,  the necessary index 1  NAC condition is studied in detail in \cite{Dl,FP15,P19}. It is observed there that if  $H^0(S, K^*_{S}\otimes {\mathcal L}) \neq 0$,  then $\mathcal L = \mathcal L_{\mu}$  for a unique  $\mu\in \C^*$, where we have used the identification of $\cL$ with a $\mu \in \C^*$  as in Remark~\ref{convention1}.   In terms of the germ \eqref{germ-fabre},  the index $1$ NAC condition is equivalent to $\frac{s}{(k-1)} \in \N$ (see \cite{Fav00}). Indeed,  in this  case 
\[\sigma:= z_2^r \big(\frac{\partial}{\partial z_1}\wedge \frac{\partial}{\partial z_2}\Big), \, r= \frac{s}{(k-1)} +1, \]
defines a germ  satisfying $F_*(\sigma) = (\lambda_1k) \sigma$, so it defines a holomorphic  section of $K^{*}_S \otimes {\mathcal L}_{\mu}$ with $\mu = \lambda_1k$. In particular,  as noticed in \cite{P19}, $\mu\in \R$  and $\mu>1$ are necessary conditions for an intermediate Kato surface $S$ to admit a bi-hermitian structure.  We now prove that they are not in general sufficient.
\begin{thm}\label{thm:BH} Suppose $S$ is an intermediate Kato surface as described in Example~\ref{ex:Kato-intermediate}, which admits a bi-hermitian structure. Then $\lambda_1$ in \eqref{germ-fabre} must be a real number satisfying $\lambda_1> 1$ or, equivalently,   $H^0(S, K^{*}_S\otimes {\mathcal L}_{\mu})\neq 0$ where $\mu$ is a positive real number satisfying $\mu>k$.
\end{thm}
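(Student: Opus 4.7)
The plan is to combine three inputs: the link between bi-hermitian structures on class {\rm VII} surfaces and twisted holomorphic anticanonical sections (together with the fact that the corresponding twist class also arises as the Lee class of a taming LCS form), the explicit identification of the twisting flat bundle from the germ~\eqref{germ-fabre}, and the explicit upper bound for $\tauc(S)$ extracted in Example~\ref{ex:Kato-intermediate}.

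First I would invoke \cite{apostolov,AD}: a bi-hermitian structure on the intermediate Kato surface $S$ produces a non-zero holomorphic section $\sigma\in H^{0}(S,K^{*}_{S}\otimes\cL_{a})$ for some de Rham class $a\in(-\infty,0]\subset H^{1}_{dR}(S,\R)$, and moreover this class $a$ coincides with the Lee class of a taming LCS $2$-form canonically associated to the bi-hermitian data; in particular $a\in\tauc(S)$.

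Next I use the germ analysis recalled immediately before the statement \cite{Dl,Fav00,P19}: in the coordinates given by \eqref{germ-fabre}, $\sigma$ must be a scalar multiple of $z_{2}^{r}\,\partial_{z_{1}}\wedge\partial_{z_{2}}$ with $r=\tfrac{s}{k-1}+1\in\N$, and the pushforward identity $F_{*}\sigma=(\lambda_{1}k)\sigma$ pins down the twisting bundle to $\cL_{a}=\cL_{\mu}$ with $\mu=\lambda_{1}k$ under the identification $\mu\leftrightarrow C_{a}$ of Remark~\ref{convention1}. Since $a\in(-\infty,0]$, this already forces $\mu\in\R$ and $\mu\geq 1$, hence $\lambda_{1}=\mu/k\in\R_{>0}$.

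Finally I feed in Example~\ref{ex:Kato-intermediate}: the function $\hat u=\log|z_{2}|$ is a negative strictly PSH function on the minimal $\Z$-cover $\hat S$ satisfying $\hat u\circ\gamma=k\,\hat u$. By Lemma~\ref{l:cover} and Proposition~\ref{p:obstruction}(a), this yields a de Rham class $b_{0}<0$ with $C_{b_{0}}=k$ such that $\tauc(S)\subset(-\infty,b_{0})$. Combining with the first step, $a\in\tauc(S)\subset(-\infty,b_{0})$, hence $a<b_{0}$; the monotonicity convention of Remark~\ref{r:convention} (decreasing $a$ corresponds to increasing $C_{a}$) then gives $\mu=C_{a}>C_{b_{0}}=k$, that is $\lambda_{1}k>k$ and hence $\lambda_{1}>1$, which is equivalent to the stated form $\mu>k$. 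The main potential obstacle is the first step: one has to verify carefully that the twist class $a$ carrying $\sigma$ is genuinely the Lee class of an accompanying taming LCS structure, since without this identification Example~\ref{ex:Kato-intermediate} only constrains $\tauc(S)$ and the bound cannot be transferred to the class carrying the anticanonical section.
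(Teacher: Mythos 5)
Your proposal is correct and follows essentially the same route as the paper: identify the twisting bundle $\cL_\mu$, $\mu=\lambda_1 k$, of the anticanonical section from the germ \eqref{germ-fabre}, show the bi-hermitian structure forces $\mu$ to lie in $\tauc(S)$ as a Lee class, and then beat it against the explicit upper bound $C_b=k$ from Example~\ref{ex:Kato-intermediate} via Proposition~\ref{p:obstruction}. The one step you rightly flag as delicate --- that the twist class carrying $\sigma$ is genuinely the Lee class of a taming LCS form --- is exactly where the paper invokes the uniqueness of $\cL_\mu$ from \cite{Dl} together with \cite[Propositions~2.3 and 2.4]{ABD}, which show every bi-hermitian structure on such a surface arises from that construction and hence comes with the required taming LCS form.
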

\begin{proof} By \cite{Dl}, ${\mathcal L}_{\mu}$ is the only flat line bundle on $S$ such that $H^0(S, K^*_{S}\otimes {\mathcal L}_{\mu}) \neq 0$ and, furthermore,  in this case  $H^0(S, K^*_{S}\otimes {\mathcal L}_{\mu}) =\C$ and the support of the NAC divisor is the maximal curve of $S$ (which is connected). It follows from \cite[Proposition~2.4]{ABD} that any bi-hermitian structure on $S$ (if it exists) must be given by the construction of \cite[Proposition~2.3]{ABD}. In particular, $J$ must be tamed by a locally conformally symplectic $2$-form $\omega$ with Lee deRham class $a$ and ${\mathcal L}_{\mu}= \mathcal{L}_{a}$ as explained above. The claim follows from Proposition~\ref{p:obstruction} and  Example~\ref{ex:Kato-intermediate}. \end{proof}

\section{Characterization of some known class VII surfaces}\label{s:conjecture}

Lemma~\ref{l:cover} allows us to recast  the results of M. Brunella~\cite{Brunella13,Brunella14} which  in turn provide a partial converse of Proposition~\ref{bounds-examples}(a).
\begin{thm}\label{thm:brunella}\cite{Brunella13, Brunella14} Let $S$ be a compact complex surface with $b_1(S)=1$ and ${\rm kod}(S)=-\infty$. Then the following conditions are equivalent
\begin{enumerate}
\item[\rm (i)] $S$ is obtained from either a hyperbolic Kato surface or a Bombieri--Inoue surface by blowing up points.
\item[\rm (ii)] $S$ admits a weakly negative non zero current $\tau \le 0$  of degree $0$, and a smooth closed $1$-form $\beta$ such that  
\[ T:=d_{\beta}d^c_{\beta} \tau \ge 0  \]
and  the support of $T$ is either empty or is an analytic set $Z$ of pure dimension $1$ in $S$.
\end{enumerate}
\end{thm}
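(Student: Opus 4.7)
My plan is to use Lemma~\ref{l:cover} to transfer the dichotomy on $S$ to a corresponding dichotomy for negative automorphic PSH functions on the minimal $\Z$-cover $\hat S$, and then to invoke the classification results of Brunella \cite{Brunella13, Brunella14} cited in the statement; in the reverse direction, I would exhibit the required $(\beta,\tau)$ by explicit construction on the minimal models listed in (i).

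For (ii) $\Rightarrow$ (i): starting from $\tau\le 0$ and $\beta\in\mathcal{E}^1(S,\R)$ with $T=d_\beta d^c_\beta \tau\ge 0$, Lemma~\ref{l:cover} produces a negative, locally integrable PSH function $\hat u$ on $\hat S$ which is multiplicative automorphic for the generator $\gamma$ with constant $C=C_b>1$ (cf.\ Remark~\ref{r:convention}). The Levi current $i\partial\bar\partial \hat u$ is (a non-negative multiple of) the pullback of $T$, hence its support is exactly $\hat\pi^{-1}(Z)$. The hypothesis in (ii) then becomes: $\hat u$ is negative, $\gamma$-automorphic, and $i\partial\bar\partial \hat u$ is either zero or supported on an analytic hypersurface of $\hat S$. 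In the first case, $\hat u$ is pluriharmonic and one invokes the rigidity part of Brunella's work, which forces $S$ to be a blow-down of an Inoue--Bombieri surface. In the second case, the pair $(\hat S, \hat u)$ fits exactly the setup of the main theorems of \cite{Brunella13, Brunella14}, whose conclusion identifies $S$ as a blow-up of a hyperbolic Kato surface.

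For (i) $\Rightarrow$ (ii): the class of pairs $(\beta,\tau)$ being sought is visibly invariant under blowing up and down of points (an exceptional divisor is added to or removed from $Z$, and $\hat u$ extends plurisubharmonically across it by standard extension results), so I reduce to the minimal case. If $S$ is Inoue--Bombieri, the closed $1$-form $\alpha$ with $d_{-\alpha}d^c_{-\alpha}(1)=0$ found in the proof of Proposition~\ref{bounds-examples} yields, via Lemma~\ref{l:cover}, a strictly negative pluriharmonic automorphic function on $\hat S$; pushing it down produces a $\tau<0$ with $T=0$, hence $Z=\emptyset$. If $S$ is a hyperbolic Kato surface, Examples~\ref{ex:Kato-intermediate} and \ref{ex:Inoue-Hirzebruch} write down explicit negative PSH functions $\hat u$ on $\hat S$ (using the germ $F$ defining the GSS) whose Levi currents are supported on the preimage of the maximal divisor; again Lemma~\ref{l:cover} converts this data into the desired pair $(\beta,\tau)$, with $Z$ equal to the maximal curve of $S$.

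The serious content of the theorem lies entirely in the direction (ii) $\Rightarrow$ (i), and within that, in the step that upgrades the analytic-support hypothesis on $i\partial\bar\partial \hat u$ into the classification of $S$. This is not accessible by the linear elliptic and Hahn--Banach techniques used elsewhere in the paper; it is the output of Brunella's analysis of the dynamics of the holomorphic foliation naturally attached to $\hat u$. Consequently, the role of my ``proof'' is principally to verify that Lemma~\ref{l:cover} provides a faithful dictionary between the object (ii) on $S$ and the PSH object on $\hat S$ to which Brunella's hypotheses apply, and to observe that the blow-up invariance of both sides reduces everything to the minimal class VII surfaces treated by Brunella.
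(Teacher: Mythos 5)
Your overall architecture --- the dictionary of Lemma~\ref{l:cover}, the case split on whether the support of $T$ is empty or a curve, the appeal to Brunella for (ii)$\Rightarrow$(i), and, for (i)$\Rightarrow$(ii), blow-up invariance plus the explicit constructions (the pluriharmonic function on the Inoue--Bombieri cover, Examples~\ref{ex:Kato-intermediate} and \ref{ex:Inoue-Hirzebruch} for hyperbolic Kato surfaces) --- is exactly the paper's. The direction (i)$\Rightarrow$(ii) is handled the same way and is fine.

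The gap is in (ii)$\Rightarrow$(i): you pass from ``negative automorphic PSH function on $\hat S$ whose Levi current is zero or supported on a curve'' directly to Brunella's classifications, asserting that $(\hat S,\hat u)$ ``fits exactly the setup'' of \cite{Brunella13,Brunella14}. The paper does not do this. Before citing Brunella it must, and does, rule out that $S$ is a blow-up of a Hopf surface, so as to know that the algebraic dimension of $S$ is zero; this is how it brings $S$ within the scope of those characterizations, and it is the one place where the paper's own machinery (rather than the mere dictionary of Lemma~\ref{l:cover}) enters the argument. Concretely: in the pluriharmonic case ($T=0$) the paper notes that $\hat u$ is then smooth and strictly negative, feeds this into the equality case of \eqref{inequality} to conclude that every $a\in\tauc(S)$ equals $b=[\beta]$, i.e.\ $\tauc(S)=\{b\}$; in the curve-supported case ($T\neq 0$) it invokes Proposition~\ref{p:obstruction}(a) to get $\tauc(S)\subset(-\infty,b)$. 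Either conclusion is incompatible with $\tauc(S)=(-\infty,0)$, which holds for every blow-up of a Hopf surface, and only after this exclusion does the paper apply \cite{Brunella13} (resp.\ \cite{Brunella14}). Your proposal contains no substitute for this step; moreover the ``rigidity part of Brunella's work'' you invoke in the pluriharmonic case is left unidentified, whereas the concrete output of that case in the paper is the equality $\tauc(S)=\{b\}$ obtained from \eqref{inequality}, not from Brunella. (A minor slip: you say the pluriharmonic case forces $S$ to be a ``blow-down'' of an Inoue--Bombieri surface; it is a blow-up --- Inoue--Bombieri surfaces carry no curves to contract.)
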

\begin{proof} By Lemma~\ref{l:cover}, we can replace condition (ii) with the existence of non-positive PSH automorphic  function on $\hat S$.

The  direction ${\rm (i)} \Longrightarrow {\rm (ii)}$ can be  established explicitly, see Section~\ref{s:examples}  for the case of a hyperbolic Kato surface. 

To establish the direction ${\rm (ii)} \Longrightarrow {\rm (i)}$, we first notice that the support of $T$ is empty iff $T=0$ (and hence $\hat T=0$). In this case,  the PSH function $\hat u$ on $\hat S$ is pluriharmonic, and therefore  is smooth and strictly negative. In particular,  the constant $C$ in Lemma~\ref{l:cover} cannot be $1$, i.e. $b:=[\beta]\neq 0$ in $H^1_{dR}(S, \R)$. The equality in \eqref{inequality} (and using that $\tau$ is  a negative smooth function) now gives $\tauc(S)=\{b\}$. It follows that  $S$ cannot be a blow up of a Hopf surface (in which case $\tauc(S)=(-\infty, 0)$, see e.g. \cite{lcs}), so the algebraic dimension of $S$ is zero (see e.g. \cite{bpv}). Thus, 
$S$ must be a blow up of a Bombieri--Inoue surface by \cite{Brunella13}.

 In the case when $T$ is supported over a curve, $T\neq 0$, and by  Proposition~\ref{p:obstruction} we have $\tauc(S) \in (-\infty, b)$. Again, $S$ cannot be a blow-up of a Hopf surface, and thus the algebraic dimension of $S$ is zero. As  $\hat S$ admits a  negative PSH function  with analytic support over a curve,  $S$ must be a blow up of a hyperbolic Kato surface by \cite{Brunella14}. \end{proof}

 The following result was obtained in \cite{lcs1}.
 \begin{thm}\label{thm:ib}\cite{lcs1} Let $S$ be a compact complex surface  with $b_1(S)=1$ and ${\rm kod}(S)=-\infty$. Then the following conditions are equivalent
 \begin{enumerate}
 \item[\rm (i)] $\tauc(S)=\{a\}$ consists of a single point;
 \item[\rm (ii)] $\tauc(S)$ is a proper closed subset of $(-\infty, 0)$;
 \item[\rm (iii)]  $S$ is obtained by  blowing up points of an Inoue--Bombieri surface.
 \end{enumerate}
 \end{thm}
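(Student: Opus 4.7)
The plan is to deduce Theorem~\ref{thm:ib} directly from the structure theorem Theorem~\ref{thm:tau-structure} together with the basic fact that $\emptyset\neq\tauc(S)\subset(-\infty,0)$, which is Proposition~\ref{p:tau}. Once these are in hand, the three-way equivalence becomes an essentially formal topological matter. I expect no genuine analytic obstacle to remain, since the hard work has already been absorbed into Theorem~\ref{thm:tau-structure} (whose proof in turn relied on the connectedness statement Theorem~\ref{thm:tau-connected} together with the openness-or-singleton dichotomy established earlier in \cite{lcs1}).

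I would first record that Theorem~\ref{thm:tau-structure} partitions the possibilities for $\tauc(S)$ into two mutually exclusive cases: either $\tauc(S)=\{a\}$ is a single point, which by the same theorem occurs precisely when $S$ is obtained from an Inoue--Bombieri surface by blowing up points, or $\tauc(S)=(d_0,b_0)$ is a genuine non-empty open interval contained in $(-\infty,0)$. The implication (iii) $\Rightarrow$ (i) is then exactly the first half of Theorem~\ref{thm:tau-structure}, and (i) $\Rightarrow$ (ii) is immediate because by Proposition~\ref{p:tau} the singleton $\{a\}$ sits inside $(-\infty,0)$, and any one-point subset of the connected unbounded space $(-\infty,0)$ is obviously both closed and proper.

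The only nontrivial step is (ii) $\Rightarrow$ (iii), which I would handle by contradiction. Suppose $\tauc(S)$ is a proper closed subset of $(-\infty,0)$ but $S$ is not a blow-up of an Inoue--Bombieri surface. Then Theorem~\ref{thm:tau-structure} forces $\tauc(S)=(d_0,b_0)$ with $d_0<b_0$ and $(d_0,b_0)\subseteq(-\infty,0)$. If $b_0<0$, choosing a sequence in $(d_0,b_0)$ converging to $b_0$ would produce a limit point in $(-\infty,0)\setminus\tauc(S)$, contradicting closedness; the same argument with $d_0$ in place of $b_0$ rules out $d_0>-\infty$. Hence $\tauc(S)=(-\infty,0)$, but this is not a \emph{proper} subset of $(-\infty,0)$, contradicting (ii). Thus $\tauc(S)$ must in fact be a single point, and a second application of Theorem~\ref{thm:tau-structure} identifies $S$ as a blow-up of an Inoue--Bombieri surface.

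In summary, the main obstacle is not in the current proof but lies upstream: one needs Theorem~\ref{thm:tau-structure}, whose non-trivial ingredients are the connectedness of $\tauc(S)$ (Theorem~\ref{thm:tau-connected}, itself relying on the duality Proposition~\ref{p:conformal-dual}, the sign refinement Proposition~\ref{p:sign}, and the characterization Proposition~\ref{p:obstruction}) and the openness-or-singleton alternative proved in \cite{lcs1}. Relative to these, Theorem~\ref{thm:ib} is a short formal corollary, requiring only the elementary topological observation that a non-empty open interval inside $(-\infty,0)$ fails to be closed unless it exhausts the whole space.
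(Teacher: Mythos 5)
The first thing to note is that the paper does not actually prove this statement: it is quoted from \cite{lcs1} (it is the result referred to elsewhere in the text as Theorem~1.3 of that reference), so there is no internal proof to compare against. More importantly, your derivation is circular within the logical architecture of this paper. The proof of Theorem~\ref{thm:tau-structure} explicitly invokes \cite{lcs1} as the ingredient ``which shows that $\tauc(S)$ is either open or a single point and characterizes the latter case'' --- and that is precisely the content of the equivalence \textup{(i)} $\Leftrightarrow$ \textup{(iii)} together with the openness you use in the step \textup{(ii)} $\Rightarrow$ \textup{(iii)}. So nothing in your argument establishes \textup{(i)} $\Leftrightarrow$ \textup{(iii)}; it is assumed upstream of the very theorem you deduce it from. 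There is also a local slip: \textup{(iii)} $\Rightarrow$ \textup{(i)} is \emph{not} ``the first half of Theorem~\ref{thm:tau-structure}''. That theorem asserts only that \emph{if} $\tauc(S)$ is a single point \emph{then} $S$ is an Inoue--Bombieri blow-up, i.e.\ \textup{(i)} $\Rightarrow$ \textup{(iii)}; the converse implication is Theorem~\ref{main}(a), which is again a citation of \cite{lcs1}.

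What is genuinely correct and non-circular in your write-up is the purely topological bookkeeping that brings condition \textup{(ii)} into the equivalence, granted the dichotomy: a one-point subset of $(-\infty,0)$ is closed and proper, while a non-empty open interval that is also closed in the connected space $(-\infty,0)$ must equal $(-\infty,0)$ and hence fails to be proper. That clopen argument is fine. But to claim a proof of the theorem itself you would have to reprove the substance of \cite[Theorem~1.3]{lcs1} --- the openness of $\tauc(S)$ away from the Inoue--Bombieri case and the identification of the singleton case with Inoue--Bombieri blow-ups --- rather than route through Theorem~\ref{thm:tau-structure}, which already presupposes it.
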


\section{Lee classes and twisted logarithmic holomorphic $1$-forms.}\label{s:logarithmic}
\subsection{Twisted logarithmic $1$-forms}
\begin{defn}\label{d:logarithmic} Let $X$ be a complex manifold, $D\subset X$ an effective divisor, and $\omlog$ a holomorphic $p$-form on $X\setminus D$. We say that $\omlog$ is a \emph{logarithmic $p$-form} on $X$ with logarithmic pole along $D$ if  $\omlog$ and $d\omlog$ have a pole of order at most one along $D$. 
We denote by $\Omega^p(\log D)$ the coherent sheaf of logarithmic $p$-forms on $X$ with pole along $D$. We shall  also consider logarithmic $p$-forms with values in a holomorphic line bundle $\cal L$, i.e. sections  of $H^0(X, \Omega^p(\log D)\otimes \cal L)$. In particular, an $\cal L$-valued logarithmic $1$-form $\omlog\in H^0(X, \Omega^1(\log D)\otimes \cal L)$ is a $\bar\partial_{\cal L}$-closed $\cal L$-valued $(1,0)$-form on $X\setminus D$ such that, around each point $p\in D$, there exists a neighbourhood $U$, holomorphic functions $f_j, g_j \in \cal O(U), j=1, \ldots, k$, a holomorphic $1$-form $\omlog_0\in \Omega^1(U)$, and a holomorphic section $s\in H^0(U, \cal L_{|_U})$ with
\begin{equation}\label{log-1-form}
\omlog_{|_U} = \Big(\omlog_0 + \sum_{j=1}^k g_j \frac{df_j}{f_j}\Big)\otimes s, 
\end{equation}
where  $\{f_j=0\}$ are the  reduced equations of the irreducible components of $D\cap U$.
\end{defn}
We shall be interested in particular in \emph{twisted} logarithmic $1$-forms on a complex surface $S$, i.e.  elements $\omlog\in H^0(S, \Omega^1(\log D)\otimes \cal L)$ where $\cal L \in H^1_0(S, \C^*)$ is a  \emph{topologically trivial} flat  $\C^*$-line bundle.   Using the short exact sequence
\begin{align*}
 \{0\} \to \Z \hookrightarrow \bb C \stackrel{\exp 2\pi i }{\longrightarrow} \bb C^* \to \{1\}, 
 \end{align*}
it follows that the first morphism in 
\begin{equation}\label{H1C}
H^1_{dR}(M,\bb C)\stackrel{\exp}{\longrightarrow}H^1_0(M,\bb C^*) \longrightarrow {\rm Pic}^0(X), 
\end{equation}
is surjective, i.e.  $\cal L$ can be written as $\cal L= \cal L_{\alpha}$ for a closed \emph{complex-valued} $1$-form $\alpha$.  As explained in Section~\ref{s:twisted-differentials},  $\omlog$ can be equivalently viewed as a $\bar\partial_{-\alpha}$-closed $(1,0)$-form on $S\setminus D$,  such that we can choose a neighbourhood $U$ of $p\in D$   with   $\alpha_{|_U}=dh$ for a smooth complex valued function $h$,   holomorphic $1$-form $\omlog_0$ and holomorphic functions $f_j, g_j$ on $U$ with
\begin{equation}\label{twisted-log}
e^{h}\omlog_{|_U} = \omlog_0 + \sum_{j=1}^k g_j \frac{df_j}{f_j}\Big.
\end{equation}
It follows from the above expression that $\omlog$ gives rise to a $(1, 2)$-current  $P_\omlog$,  defined by 
\begin{equation}\label{omega-current}
P_{\omlog}(\phi) := \int_{S\setminus D} \omlog \wedge \phi, \, \qquad  \forall \phi\in \cal{E}^{1,2}(S, \C).
\end{equation}
Furthermore,  assuming that $D$ is a simple normal crossing divisor, we compute  (using  \eqref{twisted-log} and the basic identity $i \partial \bar \partial \log |z|^2 = 4\pi \delta_0$)
\begin{equation}\label{log-current}
T_{\omlog}:= \bar \partial_{\alpha} P_{\omlog} =2\pi i \Res_{\alpha}(\omlog) T_D
\end{equation}
where $T_D$ is the current of integration over $D$,  and $\Res_{\alpha}(\omlog)$ is a complex valued function on $D$, called the \emph{residue} of $\omlog$ on $D$. The above computations show that  on each component $D_j$ of $D$ defined by $\{f_j=0\}$, we have 
\begin{equation}\label{eq:Res}
\Res_{\alpha}(\omlog)_{\mid D_j}= (e^{-h}g_j)_{|_{D_j}}.
\end{equation}
It follows from \eqref{log-current} and \eqref{eq:Res} that on $D_j$, $\Res_{\alpha}(\theta)_{\mid {D_j}}$ satisfies $\bar \partial_{-\alpha} \left( \Res_{\alpha}(\theta)_{|_{D_j}}\right)=0$, i.e. is an element of $H^0\left(D_j, \left({\cal L}_{\alpha}\right)_{|_{D_j}}\right)$ (see \eqref{dolbeault}).
 If  $\alpha=0$ and  $D_j$ is compact, then $\Res_{0}(\theta)_{|_{D_j}}= a_j$ is a constant. If  $D_j$ is a rational curve, then the flat $\C^{*}$ bundle $\left({\cal L}_{\alpha}\right)_{|_{D_j}}$ is trivial, and therefore $\Res_{\alpha}(\theta)_{|_{D_j}} = e^{h_j} a_j$ for some complex valued function $h_j$ on $D_j$. 

What will be important for us is that in the case when  $\cal L_{a} = L_{a}\otimes \C$ is a flat line bundle corresponding to a \emph{real} deRham class $a\in H^1_{dR}(S, \R)$,  there is a well-defined notion of positivity of the real and imaginary parts of $T_{\omlog}$,  independent of the representative $\alpha$ of $a\in H^1_{dR}(S, \R)$.
\begin{defn}\label{log-form-positive} Let $\omlog\in H^0(S, \Omega^1(\log D) \otimes \cal L_{a})$ be a logarithmic $1$-form on $S$ with values in the flat line bundle $\cal L_{a}$ corresponding to a deRham class $a\in H^1_{dR}(S, \R)$ via \eqref{H1}. We say that $\omlog$ is of \emph{positive type} if for some (and hence any) $\alpha \in a$, $\Res_{\alpha}(\omlog)$ is a real valued function which is  positive on each component $D_j$ of $D$. In particular, the current $\Im(T_{\omlog})$ is weakly positive and supported on $D$.
\end{defn}

\subsection{Twisted logarithmic 1-forms on class VII surfaces} We now specialize to class VII  complex surfaces. We recall that in this case  \eqref{Pic-identification} holds, i.e. the second morphism in \eqref{H1C} is an isomorphism. For this reason we shall refer to a topologically trivial,  flat $\C^*$-bundle $\cal L$ simply as a 
{\it flat line bundle}.  We  will  tacitly use the identification $H^1_0(S, \C^*) \cong \C^*$  and for a $\mu \in \C^*$ we write $\cal L_{\mu}$ for the corresponding  flat $\C^*$-bundle.  Furthermore,  $\cL=\cal L_{\mu} =  \cal L_{\alpha}$ for some closed complex-valued $1$-form $\alpha$.   The deRham class $[\alpha]$ is real iff $\mu>0$ and $[\alpha] <0$ iff $\mu >1$, according to the convention in Remark~\ref{r:convention}. In these terms, our task is to describe the minimal complex surfaces in the Kodaira class VII$_0$ which admit a non-trivial section 
\begin{equation}\label{eq:tlf}
0\neq \theta \in H^0(S,\O^1(\log D)\ot \cL_a)\neq 0, \end{equation}
for some divisor $D$ and  a flat line bundle $\cal L_{a}$ associated to a \emph{real} deRham class $a\in H^1_{dR}(S, \R)$, or equivalently, via \eqref{H1C}, to  a flat line bundle $\cal L_{\mu}$ with $\mu$ real and positive. 

\bigskip
We first study the question on the known complex surfaces in the class ${\rm VII}_0$.

\subsubsection{Twisted logarithmic $1$-forms on known class ${\rm VII}_0$ complex surfaces}\label{s:TLF}
\begin{lem} \label{Hopf mu>0} Let $S$ be a  Hopf surface satisfying \eqref{eq:tlf} for a divisor $D$ and and a flat line bundle $\cL_a$. Then $a \ge 0$. If, furthermore,  $S$ is a primary Hopf surface, then \eqref{eq:tlf} holds for $a=0$ and an elliptic curve $D$.  
\end{lem}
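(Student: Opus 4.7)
The plan is to lift $\theta$ to the universal cover $\pi: \C^2\setminus\{0\}\to S$ and analyze the resulting equivariant form, where $\gamma$ denotes a generator of an infinite cyclic subgroup of $\pi_1(S)$. The deck transformation $\gamma$ extends to a holomorphic contraction $\gamma: \C^2\to \C^2$ fixing $0$, whose differential $d\gamma(0)$ has both eigenvalues $\alpha_1,\alpha_2$ of modulus strictly less than $1$. The pull-back $\hat\theta := \pi^*\theta$ is a nonzero logarithmic $1$-form on $\C^2\setminus \{0\}$ with pole along $\hat D := \pi^{-1}(D)$, satisfying the equivariance $\gamma^*\hat\theta = \mu\, \hat\theta$ with $\mu = C_a>0$ (Remarks~\ref{r:convention} and \ref{convention1}). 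The task for (a) is to prove $\mu\le 1$, which by convention is equivalent to $a\ge 0$.

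First I would extend $\hat\theta$ through the puncture. Writing $\hat\theta = h_1\, dz_1 + h_2\, dz_2$ with $h_i$ meromorphic on $\C^2\setminus\{0\}$, each $h_i$ extends to a meromorphic function on all of $\C^2$ by the classical Hartogs/Levi removability across the codimension-two set $\{0\}$. Consequently the pole divisor $\hat D$ extends to a closed analytic hypersurface in $\C^2$, and must have only finitely many irreducible components, for an infinite $\gamma$-orbit of components would accumulate at the attractor $0$, contradicting local analyticity. The $\gamma$-action then permutes these finitely many components, so each is $\gamma^N$-invariant for some $N\ge 1$; and for any point $p$ on a $\gamma^N$-invariant analytic curve $C$, the iterates $\gamma^{nN}(p)$ converge to $0$ while staying in the closed set $C$, forcing $0\in C$. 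Thus every component of $\hat D$ passes through the origin.

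Next I would carry out the eigenvalue computation. Applying the Poincar\'e--Dulac normal form theorem to the contraction $\gamma^N$ at $0$, one can choose formal coordinates in which $\hat D\subset\{z_1 z_2 = 0\}$ and $\gamma^N$ has linear part $\mathrm{diag}(\alpha_1^N, \alpha_2^N)$ (up to a lower-triangular resonant polynomial correction). Any germ of logarithmic $1$-form with pole on $\{z_1 z_2 = 0\}$ decomposes as $g_1(z)\tfrac{dz_1}{z_1} + g_2(z)\tfrac{dz_2}{z_2}$ with $g_i$ holomorphic, and in the monomial basis $\{z_1^k z_2^l \tfrac{dz_j}{z_j}\}$ the operator $\gamma^{N*}$ is lower-triangular with diagonal entry $\alpha_1^{Nk}\alpha_2^{Nl}$. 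Expanding $g_i=\sum_{k,l\ge 0} c_{kl}^{(i)} z_1^k z_2^l$, the equation $\gamma^{N*}\hat\theta = \mu^N\hat\theta$ forces each nonzero coefficient to satisfy $\mu^N = \alpha_1^{Nk}\alpha_2^{Nl}$; since $|\alpha_j|<1$, this gives $|\mu^N|\le 1$, hence $\mu\le 1$ (as $\mu>0$), proving $a\ge 0$.

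For Part (b), take any eigenvector $v$ of $d\gamma(0)$ with eigenvalue $\alpha$; then there is a linear coordinate $f$ on $\C^2$ defining the $\gamma$-invariant line $L=\{f=0\}$ with $\gamma^*f=\alpha f$ (the ``last coordinate'' in each of the three Kodaira normal forms of a primary Hopf contraction: diagonal, Jordan, or resonant polynomial). The punctured line $L\setminus\{0\}\cong\C^*$ descends to an elliptic curve $D=\C^*/\langle w\mapsto \alpha w\rangle$ in $S$, and the meromorphic form $\tfrac{df}{f}$ on $\C^2\setminus\{0\}$ is manifestly $\gamma$-invariant, hence descends to a nonzero section of $\Omega^1_S(\log D)=\Omega^1_S(\log D)\otimes \cL_0$. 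The main technical obstacle in the proof is the structural step: combining Hartogs-type extension with the dynamics of $\gamma$ to certify that $\hat D$ has only finitely many components and that each contains $0$; once this reduction is achieved, the monomial-by-monomial eigenvalue bound is routine.
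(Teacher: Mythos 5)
Your overall strategy---lift to the universal cover $\C^2\setminus\{0\}$, extend $\hat\theta$ across the puncture by Hartogs--Levi, and extract the bound on $\mu=C_a$ from the equivariance $\gamma^*\hat\theta=\mu\hat\theta$ via a monomial eigenvalue computation at the fixed point---is essentially the paper's route: it too works with the germ of $\theta$ at $0$ satisfying $F^*\theta=\mu\theta$ for Kodaira's normal form $F(z_1,z_2)=(\alpha z_1+tz_2^m,\beta z_2)$. Your part (b), and your structural step showing that $\hat D$ has finitely many components each passing through the origin, are sound.

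There is, however, a genuine gap in the normalization step. You assert that after putting $\gamma^N$ into Poincar\'e--Dulac normal form one may assume $\hat D\subset\{z_1z_2=0\}$. This fails precisely in the resonant case $t=0$, $\alpha^k=\beta^l$, i.e.\ when $S$ is an elliptic Hopf surface: there the $\gamma$-invariant curves through the origin form the entire pencil $\{z_1^k=xz_2^l\}$, $x\in\C$, together with the axes, and the divisor $D$ may be any finite union of fibers of the elliptic fibration $z_1^kz_2^{-l}$. Such fibers are not coordinate axes in any normalizing chart (for $k,l\ge 2$ they are singular at $0$, and in any case there may be more than two of them), so the local decomposition $g_1\,\frac{dz_1}{z_1}+g_2\,\frac{dz_2}{z_2}$ on which your eigenvalue argument rests does not apply. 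The paper treats this as a separate case: writing $\theta=\sum_i A_i\,\frac{d(z_1^k-x_iz_2^l)}{z_1^k-x_iz_2^l}$ and using $F^*(z_1^k-x_iz_2^l)=\alpha^k(z_1^k-x_iz_2^l)$, the equivariance reduces to $A_i\circ F=\mu A_i$, and the same monomial expansion then forces $A_i=0$ when $\mu>1$. So your conclusion is recoverable, but the case analysis of the invariant polar curves (the paper's three cases $t\neq 0$; $t=0$ non-resonant; $t=0$ resonant) cannot be bypassed by the normal-form claim as stated. A minor further point: for a secondary Hopf surface the contraction $\gamma$ generates only a finite-index subgroup of $\pi_1(S)$, so one should either pass to the primary cover first (as the paper does) or observe that $\gamma$ acts on $\hat\theta$ by $C_a^{\,d}$ for some integer $d\ge 1$, which does not affect the sign conclusion.
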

\begin{proof}  

A primary Hopf surface is the quotient of $\C^2\setminus \{0\}$ by the cyclic group  generated by  a contraction of  the type
\begin{equation}\label{Hopf-contraction}
F(z_1,z_2)=(\alpha z_1+tz_2^m, \beta z_2), \quad 0<|\alpha|< |\beta|<1,\quad t(\alpha-\beta^m)=0, \ m\ge 1.
\end{equation}
The secondary Hopf surfaces are obtained by taking a further quotient with a finite group, see \cite{kato-hopf}. To prove the statement, it is  thus enough to assume that $S$ is a primary Hopf surface  given by \eqref{Hopf-contraction}

Clearly,  \eqref{Hopf-contraction} preserves the $1$-form $\frac{dz_2}{z_2}$. The latter gives rise to a non-zero element of $H^0(S,\Omega^1(\log D))$ with $D$ corresponding to the elliptic curve $z_2=0$. We can thus  realize the value $a=0$. 

\smallskip
Suppose now $\theta \in H^0(S,\Omega^1(\log D)\otimes \cL_\mu)\neq 0$ with $\mu >1$. We shall derive that $\theta=0$.  The twisted logarithmic form $\theta$  is induced by a germ of logarithmic 1-forms (still denoted $\theta$) on $(\mathbb C^2,0)$,  such that $F^*\theta=\mu \theta$. We consider the following 3 cases, depending of the form of \eqref{Hopf-contraction}:
\begin{enumerate}
\item[(i)] Case $t\neq 0$.  Then there exists only one elliptic curve on $S$,  given by $\{z_2=0\}$, and 
$\theta$ has the form 
$$\theta(z_1,z_2)=A(z_1,z_2)dz_1+B(z_1,z_2)\frac{dz_2}{z_2},$$
where $A,B$ are germs of holomorphic functions on $(\mathbb C^2,0)$  satisfying
\[ A(F(z))\alpha = \mu A(z), \qquad mA(F(z))tz_2^{m-1}+\frac{B(F(z))}{z_2} =  \displaystyle\mu \frac{B(z)}{z_2}. \]
As $|\alpha|, |\beta|<1$ and $\mu>1$,  it follows that $A=0$; then the remaining  equation is $B(F(z))=\mu B(z)$, which yields $B=0$.
\item[(ii)] Case $t=0$ and $\alpha^k\neq \beta^l$, $k,l\in\mathbb N$.  In this case $S$ admits only two elliptic curves \cite[Proposition~18.2]{bpv}, corresponding to $z_1=0$ and $z_2=0$. Therefore
$\theta$ has the form 
$$\theta(z_1,z_2)=A(z_1,z_2)\frac{dz_1}{z_1}+B(z_1,z_2)\frac{dz_2}{z_2},$$
 where $A, B$ are holomorphic and satisfy
\[  A(F(z))\alpha =  \mu A(z)\qquad
B(F(z))\beta =  \mu B(z). \]
Using $0<|\alpha|<|\beta|<1<\mu$, we derive again $A=B=0$.
\item[(iii)] Case $t=0$ and $\alpha^k=\beta^l$ for  $k,l\in\mathbb N$.  In this case the Hopf surface $S$ is elliptic fibered over $\mathbb P^1(\mathbb C)$ by the meromorphic function  $z_1^kz_2^{-l}$ ~\cite[Proposition~18.2]{bpv}. There are no other curves in $S$ than the fibers of this fibration,  because the algebraic dimension  is  $a(S)=1$. Thus, any finite number of  polar curves for $\theta$ have the equations 
$$z_1^k-x_i z_2^l=0, \quad i=1,\ldots,p.$$
Thus, we get 
$$\theta= \sum_{i=1}^p A_i(z) \frac{d(z_1^k-x_i z_2^l)}{z_1^k-x_i z_2^l}.$$
The condition $F^*\theta=\mu\theta$ is then satisfied if and only if $A_i(F(z))=\mu A_i(z)$, for $i=1,\ldots,p$. It is easy to see that $A_i=0$, $i=1,\ldots,p$.
\end{enumerate}
\end{proof}

\begin{rem} The proof of Lemma~\ref{Hopf mu>0} shows that  for any primary Hopf surface $S$, the $1$-form  $z_2^k dz_2, \, k\in \N$ on $\C^2$ give rise to  a twisted holomorphic $1$-form (i.e. with $D=\emptyset$) with values in the flat line bundle $\cal L_{\beta^{k+1}}$;  in the case when $\beta \in \R$ these give rise to infinitely many values of $a>0$ and $D=\emptyset$.
\end{rem}

\begin{lem} \label{I-B1-forme} Let $S$ be  an Inoue--Bombieri surface. Then the condition \eqref{eq:tlf} holds only for $D=\emptyset$ and  a unique value of $a$. Furthermore, $a<0$ and  $\tauc(S)=\{a\}$.
\end{lem}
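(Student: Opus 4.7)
The plan rests on three steps. First, by Inoue's original construction~\cite{I74}, every Inoue--Bombieri surface $S$ satisfies $b_2(S)=0$ and contains no compact complex curves, so any effective divisor on $S$ is empty and the hypothesis~\eqref{eq:tlf} reduces to $H^0(S,\Omega^1\otimes\cL_a)\neq 0$ for some real deRham class $a$. The remaining content of the lemma is then: such $a$ is unique, it is negative, and it coincides with the class singled out by Theorem~\ref{thm:ib}.

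Second, I would lift a non-zero $\theta$ to the universal cover $\widetilde S\cong\mathbb H\times\mathbb C$, write it as $\widetilde\theta = f(w,z)\,dw + g(w,z)\,dz$ with $f,g$ holomorphic, and impose the equivariance $\gamma^*\widetilde\theta = \chi(\gamma)\widetilde\theta$ under the deck group $\Gamma=\pi_1(S)$ with respect to a character $\chi\colon\Gamma\to\mathbb C^*$. Using Inoue's explicit presentations of $\Gamma$ for the three families $S_M$, $S^+_{N,p,q,r;t}$, $S^-_{N,p,q,r}$, the translation subgroup of $\Gamma$ together with holomorphy of $f,g$ forces both coefficients to be constant. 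Equivariance under the hyperbolic generator $g_0$, which acts on $\mathbb H$ by $w\mapsto\alpha w$ for the real eigenvalue $\alpha>0$ of the defining integer matrix, then pins down $\widetilde\theta$ up to scalar as $c\,dw$ with $\chi(g_0)=\alpha$. Any $dz$-component is ruled out because for $S_M$ the $z$-action of $g_0$ is by a non-real complex eigenvalue $\beta$ (so $\chi$ would not be real), while for $S^\pm$ the $g_0$-action on $z$ is affine with a $w$-dependent term that introduces an unavoidable $dw$-correction incompatible with having $g\not\equiv 0$.

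Third, by Remark~\ref{convention1} the flat bundle $\cL_a$ is identified with $\cL_\mu$, where $\mu=\alpha$ or $1/\alpha$ according to the choice of generator of the cyclic quotient of $\Gamma/[\Gamma,\Gamma]$; after the sign normalization of Remark~\ref{r:convention} we have $\mu>1$ and hence $a<0$. This settles uniqueness and negativity. Finally, Theorem~\ref{thm:ib} asserts $\tauc(S)=\{a'\}$ for some $a'<0$; since the Lee form $-d\log\,\mathrm{Im}\,w$ of the standard Tricerri--Belgun locally conformally K\"ahler structure on $\widetilde S$ has the same $g_0$-automorphy constant $\alpha$ as $dw$, the corresponding flat bundle agrees with $\cL_a$, so $a=a'$.

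The principal obstacle is the case-by-case analysis in step two, especially for the $S^\pm$ families, where the $g_0$-action on the $\mathbb C$-factor is affine and the translations in $\Gamma$ mix the $w$ and $z$ coordinates. Ruling out a non-trivial $g\,dz$ contribution there requires carefully combining the translation lattice with the equivariance equation to force $g\equiv 0$, after which the identification of automorphy constants in step three is essentially immediate.
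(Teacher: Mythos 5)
Your first and last steps agree with the paper: $D=\emptyset$ because Inoue--Bombieri surfaces carry no curves, and the identification of $a$ with the class of Theorem~\ref{thm:ib} goes through the common automorphy constant $\alpha>1$ of $dw$ and of $d\log w_2$ (cf.\ \cite[Lemma~4.4]{lcs1} and Remark~\ref{convention1}). Where you genuinely diverge is in proving uniqueness of the real twisting: you propose a full equivariant classification of $H^0(S,\Omega^1\otimes\cL_\mu)$ on $\mathbb H\times\mathbb C$, and you yourself flag that this is not carried out for the $S^{(\pm)}$ families. That is a real gap, not just a technicality: for $S^{(\pm)}$ the lattice generators act on $z$ affinely with a $w$-dependent translation, so pulling back $f\,dw+g\,dz$ gives the coupled system $g\circ g_i=g$, $f\circ g_i=f-b_i\,g$, and constancy of $f,g$ no longer follows from periodicity plus Liouville as it does for $S_M$; the claim that any $dz$-component is ``ruled out'' is asserted, not proved. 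The paper sidesteps this computation entirely with a wedge-product argument that you should compare with: if $0\neq\theta_1\in H^0(S,\Omega^1\otimes\cL_\mu)$ and $0\neq\theta_2\in H^0(S,\Omega^1\otimes\cL_\nu)$ with $\mu\neq\nu$ both real, then $\theta_1\wedge\theta_2$ cannot vanish identically (otherwise $\theta_1/\theta_2$ would be a non-constant meromorphic function on a surface of algebraic dimension zero), hence is a nowhere-vanishing section of $K_S\otimes\cL_{\mu\nu}$ (its zero locus would be a divisor, but there are no curves), forcing $K_S\otimes\cL_{\mu\nu}=\mathcal O$; the unique flat factor trivializing $K_S$ is then read off from the action on $dw\wedge dz$ and contradicts the reality of $\nu$ in each of the three cases. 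This needs only the existence of the single form $dw$, not a classification, and requires no case-by-case Fourier analysis. Your route, if completed, would yield the stronger statement that $c\,dw$ is the only twisted holomorphic $1$-form with real twisting; as written, however, the $S^{(\pm)}$ cases must either be worked out in full or replaced by the wedge-product argument.
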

\begin{proof}  Recall that \cite{I74} Inoue--Bombieri surfaces  admit no curves, so $D=\emptyset$.  Furthermore, any Inoue--Bombieri surface $S$ is the quotient of $\bb H\times \bb C$ by a discrete subgroup $G$ of automorphisms, and we denote by $w=w_1 + i w_2, w_2>0$  and $z= z_1 + i z_2$  the complex coordinates on $\bb H$ and $\bb C$, respectively.
In the  notation of  \cite{I74},  there are the following three types of Inoue--Bombieri surfaces:
\begin{itemize}
\item $S$ of type $S_M=\bb H\times \bb C/G_M$. 
Then the $1$-form $dw$ on $\bb H\times\bb C$ induces a twisted $1$-form in $H^0(S_M, \O^1\ot \cal L_\mu)$ with $\mu$ real and $\mu>1$. Notice that in this case,  there is another twisted holomorphic $1$-form,  given by  $dz$,  i.e.  $H^0(S,\Omega^1\otimes \cL_\beta)\neq 0$ with $\beta\not\in\mathbb R$.
\item  $S$ of type $S^{(+)}_{N,p,q,r;t}=\bb H\times \bb C/G^{(+)}_{N,p,q,r;t}$. Then the normal subgroup $\G=\langle g_1,g_2,g_3\rangle$ of $G^{(+)}_{N,p,q,r;t}$ gives rise to the covers
$$\bb H\times \bb C\to \bb H\times \bb C/\G\to S^{(+)}_{N,p,q,r;t}.$$
Then $dw$ induces a holomorphic $1$-form on $\bb H\times \bb C/\G$ and a twisted holomorphic $1$-form in  $H^0(S^{(+)}_{N,p,q,r;t}, \O^1\ot \cal L_\mu)$, with $\mu>1$.
\item $S$ of type $S^{(-)}_{N,p,q,r}$. We have a double covering 
$$S^{(+)}_{N^2,p_1,q_1,r;0}\to S^{(-)}_{N,p,q,r}.$$
In this case a twisted holomorphic $1$-form in $H^0(S^{(+)}_{N^2,p_1,q_1,r;0},\O^1\ot \cal L_{\mu^2})$ with $\mu>1$,  induces a twisted holomorphic $1$-form in $H^0(S^{(-)}_{N,p,q,r},\O^1\ot\cal L_\mu)$.
\end{itemize}
In \cite[Lemma~4.4]{lcs1},  it is shown that  $\alpha:=dw_2/w_2 = d \log(w_2)$  defines a closed $1$-form on $S$ (in all of the cases above), such that the deRham class $a=[\alpha]$ satisfies  $\tauc(S)=\{a\}$. We notice that in the above constructions, we have $\cal L_{\mu} = \cal L_{a}$ in terms of \eqref{H1C}.

\smallskip We now argue that if $H^0(S,\Omega^1\otimes \cL_\mu)\neq 0$ and $\mu \in \R^*$, then $\mu$ is unique.  Indeed, if we have $0\neq \theta_1=dw\in H^0(S,\Omega^1\otimes \cL_{\mu})$ and $0\neq \theta_2\in H^0(S,\Omega^1\otimes \cL_\nu), \, \nu \in \R^*, \nu \neq \mu$, then $\omega:=\theta_1\wedge \theta_2\in H^0(S,K_S\otimes \cL_{\mu\nu})$ is non-zero. Otherwise $\theta_1/\theta_2$ would define a non-constant meromorphic function on $S$, an absurd.  As there are no curves on $S$ (see e.g. \cite{I74} or \cite{bpv}), we have  that $\omega$ does not vanish on $S$, i.e. $K_S\otimes \cL_{\mu\nu}=\mathcal O$.  The flat factor $\cL_{\mu \nu}$ such that the latter holds is unique (as $S$ admits no curves). On the other hand, inspecting the action of $G$ on the holomorphic $2$-form $dw\wedge dz$ on $\bb H \times \bb C$ according to the three types of Inoue--Bombieri  surfaces, we get 
\begin{itemize}
\item for $S_M$, $K_S \otimes \cal L_{\mu \beta}= \cal O$ with $\mu\beta \in \C^*\setminus \R^*$, an absurd.
\item for $S^{(+)}$, $K\otimes \cL_\mu=\mathcal O$, therefore $\nu =1$ an absurd as $H^0(S, \Omega^1)=0$ (see \cite{bpv}).
\item for $S^{(-)}$, $K\otimes \cL_{\mu}=\mathcal O$, therefore  $\nu =1$ an absurd as $H^0(S, \Omega^1)=0$ (see \cite{bpv}).
\end{itemize}
\end{proof}

\begin{lem}\label{l:Enoki} Let $S$ be an Enoki surface and $D_{r}$ its cycle of rational curves, corresponding to a flat line bundle $[D_r]=\cL_{t}, \, t\in\mathbb C^*$, $0<|t| < 1$. Then \eqref{eq:tlf} holds iff either $D=D_r$ and $\cL_a=\mathcal{O}$  or,  when $t$ is real, $D=\emptyset$ and $\cL_a=\cL_{t^{p}}$ where $p\ge 1$ is an integer. 
\end{lem}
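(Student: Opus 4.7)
Following the strategy of Lemma~\ref{Hopf mu>0}, the plan is to use the germ description of $S$ and analyze the functional equation $F^*\theta=\mu\theta$, where $F:(\bb C^2,0)\to(\bb C^2,0)$ is the contracting germ defining $S$ and $\cL_a=\cL_\mu$ via \eqref{Pic-identification}. For an Enoki surface, in suitable coordinates the cycle $D_r$ lifts near the contracting fixed point to the divisor $\{z_2=0\}$, the line bundle $[D_r]$ corresponds to $\cL_t$ with $0<|t|<1$ via the normal bundle along $D_r$, and in particular $F^*(dz_2/z_2)=dz_2/z_2$ while $F^*(z_2^{p-1}\,dz_2)=t^{p}\,z_2^{p-1}\,dz_2$. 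Since every compact curve on the minimal Enoki surface $S$ is a component of $D_r$ (see \cite{enoki,D84}), any effective divisor $D$ supporting the polar locus of a twisted logarithmic $1$-form satisfies $D\subseteq D_r$, and the cycle structure of $D_r$ combined with the equivariance below then forces $D$ to be either empty or the full cycle.

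For the existence direction, the $F^*$-invariance of $dz_2/z_2$ produces a non-zero section of $\Omega^1(\log D_r)$ on $S$, realizing the case $D=D_r$, $\cL_a=\mathcal O$. When $t\in\R$ and $p\ge 1$ is an integer, the holomorphic $1$-form $z_2^{p-1}\,dz_2$ on $(\bb C^2,0)$ descends to a non-zero section of $\Omega^1\otimes\cL_{t^p}$, realizing the case $D=\emptyset$, $\cL_a=\cL_{t^p}$.

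For the converse, I would take a non-zero $\theta\in H^0(S,\Omega^1(\log D)\otimes\cL_\mu)$ with $\mu\in\R_{>0}$ and $D\subseteq D_r$, and write its lift at $0\in \bb C^2$ as
\[
\hat\theta=A(z_1,z_2)\,dz_1+B(z_1,z_2)\,\frac{dz_2}{z_2},
\]
where $A,B$ are holomorphic germs and $B$ vanishes along $\{z_2=0\}$ when $D=\emptyset$. The equivariance $F^*\hat\theta=\mu\hat\theta$ yields a pair of coupled functional equations for $A$ and $B$ relating their values along iterates of $F$. Using the contracting nature of $F$ together with the explicit form of the Enoki germ recorded in \cite{enoki,D84}, the Taylor coefficients of $A$ and $B$ are forced to vanish unless the corresponding monomial resonates with $\mu$ via the eigenvalues of $dF(0)$; a direct monomial-by-monomial analysis, modelled on the Hopf case, then leaves only $\hat\theta=c\,dz_2/z_2$ with $\mu=1$ (case $D=D_r$, $\cL_a=\mathcal O$) or $\hat\theta=c\,z_2^{p-1}\,dz_2$ with $\mu=t^p$ for some integer $p\ge 1$ (case $D=\emptyset$, $\cL_a=\cL_{t^p}$), the latter being consistent with the requirement $\mu\in\R_{>0}$ only when $t\in\R$. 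The main obstacle is the bookkeeping in the resonance analysis, in particular to rule out any additional solutions arising from the mixed polynomial terms of the Enoki germ which couple the $z_1$- and $z_2$-directions at higher Taylor order.
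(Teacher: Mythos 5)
Your overall strategy --- lifting $\theta$ to the contracting germ $F$ and analyzing the equivariance $F^*\hat\theta=\mu\hat\theta$ --- is the same as the paper's, and your existence direction ($dz_2/z_2$ for $D=D_r$, $\cL_a=\mathcal O$, and $z_2^{p-1}dz_2$ for $D=\emptyset$, $\cL_a=\cL_{t^p}$) matches exactly. But there is a genuine gap in the converse direction. Your claim that ``every compact curve on the minimal Enoki surface $S$ is a component of $D_r$'' is false in general: when the coefficients $a_i$ of the Enoki germ
\[
F(z_1,z_2)=\Bigl( t^nz_1z_2^n+\sum_{i=0}^{n-1}a_it^{i+1}z_2^{i+1},\ tz_2\Bigr)
\]
all vanish (the parabolic Inoue case), $S$ contains in addition an \emph{elliptic curve} $E=\{z_1=0\}$, and the maximal divisor is $D_r+E$, not $D_r$. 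Consequently your ansatz $\hat\theta=A\,dz_1+B\,dz_2/z_2$ omits the a priori possible pole along $\{z_1=0\}$; the paper must (and does) work with the more general form $\hat\theta=A(z)\,dz_1/z_1+B(z)\,dz_2/z_2$ in that case and rule out a pole on $E$ by the expansion. Without this, your case analysis does not establish that $D$ must be $D_r$ or $\emptyset$, which is the heart of the ``only if'' statement.

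A second, smaller issue: your uniqueness argument in the generic case ($a_i$ not all zero) is only a sketch of a resonance analysis whose ``bookkeeping'' you yourself flag as the main obstacle --- precisely because the mixed term $t^nz_1z_2^n+\sum a_it^{i+1}z_2^{i+1}$ couples the two directions nontrivially. The paper sidesteps this entirely by citing the existing classification of twisted logarithmic $1$-forms on Kato surfaces (\cite[Section 3]{Nakamura}, \cite[Lemma~5.4]{DO}) when $D_r$ is the maximal divisor, and only does the direct Taylor expansion in the parabolic case. If you want a self-contained proof you would need to actually carry out the resonance computation, including showing that no proper subdivisor of the cycle $D_r$ can serve as polar locus; as written, that step is asserted rather than proved.
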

\begin{proof} An Enoki surface can be defined by a contraction of the form~\cite{DK98}
$$F(z_1,z_2)=\left( t^nz_1z_2^n+\sum_{i=0}^{n-1}a_it^{i+1}z_2^{i+1}, tz_2\right), \quad 0<|t|<1, \quad n=b_2(S).$$
The curve $\{z_2=0\}$ induces the unique cycle of rational curves $D_r$ in $S$, so that we have $[D_r]=\cL_t$. The $1$-form $\theta_0:=\frac{dz_2}{z_2}$ satisfies $F^*(\theta)=\theta$,  so it gives rise to a  non-trivial section in $H^0(S,\Omega^1(\log D_r))$. Similarly,  for any integer $p \ge 1$, the $1$-form $\theta_p := z_2^{p-1} dz_2$ gives rise to a non-trivial section in $H^0(S,\Omega^1\otimes \cL_{t^{p}})$.

When the coefficients $a_i$ of the contraction germ $F$ are not all zero, $D_r$ is also the maximal divisor of $S$ and we can use the results in \cite[Section 3]{Nakamura} or \cite[Lemma~5.4]{DO} to conclude that the constant multiples of the above sections are the only non-trivial solutions of \eqref{eq:tlf}, i.e we must have either $D= D_r$ and $a=0$,  or $D=\emptyset$ and $\mu = t^{p}$. Notice that in the latter case, as $0< t^{p} <1$, we have  $\cL_{t^{p}} = \cL_a$ with $a>0$.

When the coefficients $a_i=0$, $S$ also admits an elliptic curve $E$ corresponding to $\{z_1=0\}$, and the maximal divisor in this case is $D_r + E$. A non-zero section in $H^0(S,\Omega^1(\log D)\otimes \cL_{\mu})$ then must correspond to a  germ of $1$-forms $\theta=A(z) \frac{dz_1}{z_1} + B(z)\frac{dz_2}{z_2}$ with $A(z)$ and $B(z)$ holomorphic functions defined around $0\in \C^2$, such that $F^*(\theta)= \mu \theta$. Expanding $A(z)$ and $B(z)$ around $0$ and using the specific form of $F$, one finds again that the multiples of  $\theta_p$, $p\ge 0, p\in \N$ are the only possible solutions. This shows that  in this case too for \eqref{eq:tlf} to hold  we must have either $D= D_r$ and $a=0$,  or $D=\emptyset$ and $\mu = t^{p}$. \end{proof}

\begin{lem}\label{ex:log-holom}  Suppose $S$ is a hyperbolic Kato surface. Then the condition  \eqref{eq:tlf} holds if and only if $D$ is the maximal divisor of $S$,  and for at most two values of $a$. Furthermore,  there exists a unique value of $a$  satisfying $a<0$ and this value is an upper bound of $\tauc(S)$.
\end{lem}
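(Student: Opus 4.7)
The plan is to reduce the question to a calculation with germs at the pseudo-concave end of the minimal $\Z$-cover $\hat S$, using the two explicit families of contracting germs recalled in Examples~\ref{ex:Kato-intermediate} (intermediate type) and \ref{ex:Inoue-Hirzebruch} (Inoue--Hirzebruch type). A non-zero $\theta \in H^0(S, \Omega^1(\log D) \otimes \cL_\mu)$ lifts to a $\gamma$-equivariant twisted log $1$-form on $\hat S$; via the biholomorphism identifying a neighbourhood of $0 \in \C^2$ with the pseudo-concave end of $\hat S$, it yields a germ $\tilde\theta$ on $(\C^2, 0)$ supported on a subset of $\{z_2 = 0\}$ (intermediate) or $\{z_1 = 0\} \cup \{z_2 = 0\}$ (Inoue--Hirzebruch) and satisfying $F^* \tilde\theta = \mu \tilde\theta$, where $F$ is the defining contraction germ. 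The admissible values of $\mu$ and the corresponding $\tilde\theta$ will be read off directly from this functional equation.

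In the intermediate case, write $\tilde\theta = A(z)\,dz_1 + B(z)\,\frac{dz_2}{z_2}$ with $A, B$ holomorphic germs at $0$. Using the explicit form \eqref{germ-fabre} of $F$, the relation $F^*\tilde\theta = \mu\tilde\theta$ splits into
\[
\lambda_1 z_2^s A(F(z)) = \mu A(z), \qquad z_2\, A(F(z))\,\tfrac{\partial F_1}{\partial z_2} + k B(F(z)) = \mu B(z).
\]
Iterating the first equation forces $A$ to be divisible by arbitrarily high powers of $z_2$, hence $A \equiv 0$; the second then reduces to $k B(F(z)) = \mu B(z)$. Evaluating at the origin forces $\mu = k$ (otherwise $B(0) = 0$ and contraction of $F$ yields $B \equiv 0$), and once $\mu = k$ the same contraction argument forces $B$ to be constant. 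So $\mu = k$ is the unique admissible value in this case, realised by $\theta = c\,\frac{dz_2}{z_2}$, $c \in \C^*$.

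In the Inoue--Hirzebruch case, writing $\tilde\theta = A(z)\,\frac{dz_1}{z_1} + B(z)\,\frac{dz_2}{z_2}$, the equation $F^*\tilde\theta = \mu\tilde\theta$ becomes
\[
p A(F) + r B(F) = \mu A, \qquad q A(F) + s B(F) = \mu B,
\]
so $(A(0), B(0))$ must be a non-trivial eigenvector of the matrix of Example~\ref{ex:Inoue-Hirzebruch}, forcing $\mu \in \{\lambda_1, \lambda_2\}$. Exploiting that $F$ scales the weighted degree $G$ by $\lambda_1$, the higher-order Taylor coefficients of $(A, B)$ are uniquely determined by their values at $0$ and in fact vanish. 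Since $\lambda_1 > 1$ and $\lambda_2 = \pm \lambda_1^{-1}$, the admissible positive real values of $\mu$ are either $\{\lambda_1\}$ (when $\det A = -1$) or $\{\lambda_1, \lambda_1^{-1}\}$ (when $\det A = 1$). Together with the intermediate case, this proves the bound \emph{at most two values of $a$} and exhibits a unique value $a < 0$, corresponding respectively to $\mu = k$ or $\mu = \lambda_1$.

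It remains to show that $D$ equals the maximal divisor $D_{\max}$ and that the distinguished $a < 0$ bounds $\tauc(S)$ from above. The constructed germ has non-vanishing residue on each local curve at $0$, and pulling back through the sequence of blow-ups defining the Kato factorization $F = \pi \circ \sigma$ propagates the logarithmic pole along the full cycle of rational curves (and, in the intermediate case, the trees attached to it); combined with $\gamma$-equivariance on $\hat S$, this identifies $D$ with $D_{\max}$. For the upper bound, the constant $C_a$ attached to the distinguished $a < 0$ equals $k$ (intermediate) or $\lambda_1$ (Inoue--Hirzebruch), which is precisely the automorphy constant of the negative strictly PSH function $\hat u$ built on $\hat S$ in Examples~\ref{ex:Kato-intermediate}--\ref{ex:Inoue-Hirzebruch}; by Lemma~\ref{l:cover} and Proposition~\ref{p:obstruction}(a), this yields $\tauc(S) \subseteq (-\infty, a)$. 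The main obstacle I anticipate is the global identification $D = D_{\max}$: the germ calculation only controls the poles at the pseudo-concave end, and propagating non-vanishing of the residue across every component of $D_{\max}$, particularly across the tree branches in the intermediate case, requires a careful argument based on the factorization of $F$ through successive blow-ups and the residue consistency at the intersection points of the cycle.
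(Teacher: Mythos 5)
Your proposal is correct in substance but takes a genuinely different route from the paper for the uniqueness part. The paper's proof is short: it exhibits the explicit forms $\xi\frac{dz_1}{z_1}+\eta\frac{dz_2}{z_2}$ (Inoue--Hirzebruch) and $\frac{dz_2}{z_2}$ (intermediate type) coming from Examples~\ref{ex:Inoue-Hirzebruch} and \ref{ex:Kato-intermediate}, observes that the associated constants $\lambda_1$ resp.\ $k$ are exactly the automorphy constants of the negative PSH functions constructed there (whence the upper bound on $\tauc(S)$ via Proposition~\ref{p:obstruction}(a)), and then \emph{cites} the classification of all twisted logarithmic $1$-forms on Kato surfaces from Dloussky--Oeljeklaus \cite[Section~2.2, Lemma~5.4]{DO} to get "at most two values of $a$" and "$D=D_{\max}$". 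You instead re-derive that classification from scratch by solving the functional equation $F^*\tilde\theta=\mu\tilde\theta$ at the germ level; your computations (divisibility by $z_2^{s(1+k+\cdots)}$ killing $A$, the eigenvector condition forcing $\mu\in\{\lambda_1,\lambda_2\}$, the count of positive real $\mu$ according to $\det A=\pm1$) are all correct and agree with what \cite{DO} yields. What your approach buys is self-containedness; what it costs is that two steps you treat as routine are precisely what the citation packages: (i) the dictionary between global sections of $\Omega^1(\log D)\otimes\cL_\mu$ on $S$ and germs at $(\C^2,0)$ satisfying $F^*\tilde\theta=\mu\tilde\theta$ with poles on the coordinate axes — this needs the extension of the pushed-down form across the critical value $\hat O_C$ of $p_C$, the converse construction (the analogue for forms of Lemma~\ref{pshZrev}), and the fact that a hyperbolic Kato surface carries no curves outside its maximal divisor; and (ii) the propagation of the logarithmic pole of $d(\pi^*z_2)/\pi^*z_2$ through the whole tower of blow-ups, including the tree branches in the intermediate case, which you rightly flag as the main obstacle. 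Point (ii) does go through — every exceptional curve of $\pi$ lies over the origin, where $z_2$ vanishes, so $\pi^*z_2$ vanishes with positive multiplicity along each of them and the residue is that multiplicity — but it should be said. With (i) and (ii) filled in, your argument is a complete and more elementary substitute for the paper's appeal to \cite{DO}.
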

\begin{proof} Consider first the case of an Inoue--Hirzebruch surface. In the  notation of  Example~\ref{ex:Inoue-Hirzebruch}, one can check that  
\[ \omlog:= \xi \frac{dz_1}{z_1} + \eta \frac{dz_2}{z_2} \]
gives rise to a  twisted  logarithmic $1$-form on $S$ with values in the flat bundle is $\cal L_{\lambda_1}$  with  $\lambda_1>1$. The 
corresponding deRham class $a\in (-\infty, 0)$ is  the explicit upper bound of the set $\tauc(S)$ found in Example~\ref{ex:Inoue-Hirzebruch}. More generally, by \cite[Section 2.2]{DO}, on an Inoue--Hirzebruch there are in fact  two twisted logarithmic 1-forms $\theta_i \in H^0(S,\Omega^1(\log D)\otimes \cL_{\lambda_i})$, $i=1,2$,
with $\lambda_1\lambda_2=\pm 1$. For these examples,  $D$ equals the maximal divisor of $S$  and the condition on $\lambda_i$ leads to a unique twisting  $\cL_{\mu}$  with  $\mu>1$,  i.e.  with  $a<0$.

Similarly, if $S$ is a  Kato surface of intermediate type, Example~\ref{ex:Kato-intermediate} shows that $\frac{dz_2}{z_2}$ gives rise to a twisted logarithmic $1$-form with pole along the maximal divisor of $S$ and values in $\cal L_{k}, \, k\ge 2$. Again, $\cal L_{k} = \cal L_{a}$ where $a\in (-\infty, 0)$ is the explicit upper bound for $\tauc(S)$ identified in Example~\ref{ex:Kato-intermediate}.

By \cite[Lemma~5.4]{DO},  the above describe \emph{all} possible twisted logarithmic $1$-forms on Kato surfaces up to multiplicative constants, which yields  the uniqueness statement for $D$ and $a<0$. \end{proof}

We summarize the above discussion in the following
\begin{cor} Suppose $S$ is a known minimal complex surface in the class ${\rm VII}$, different than a secondary Hopf surface. Then $S$ admits a  divisor $D$ and flat bundle $\cal L_{a}$ corresponding to a 
deRham class $a\in (-\infty, 0]$,  such that $H^0(S,\Omega^1(\log D)\otimes \cL_a)\neq 0$. Furthermore, $a$ is an upper bound of $\tauc(S)$.
\end{cor}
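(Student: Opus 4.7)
The plan is a straightforward case analysis using the classification of known minimal class {\rm VII} surfaces supplied by Definition~\ref{VII}. Since $S$ is known, minimal, and not a secondary Hopf surface, it must be either a primary Hopf surface, an Inoue--Bombieri surface, an Enoki surface, or a hyperbolic Kato surface. In each of these four families the existence of a pair $(D,\cL_a)$ with $H^0(S,\Omega^1(\log D)\otimes \cL_a)\neq 0$ and $a\in(-\infty,0]$ is precisely the content of one of the four lemmas immediately preceding the corollary, so no new construction is needed.

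More concretely, I would go through the cases in order: Lemma~\ref{Hopf mu>0} supplies, for a primary Hopf surface, an elliptic curve $D$ and $a=0$; Lemma~\ref{I-B1-forme} supplies, for an Inoue--Bombieri surface, the empty divisor $D=\emptyset$ together with a unique $a<0$ arising from the closed $1$-form $d\log w_2$; Lemma~\ref{l:Enoki} supplies, for an Enoki surface, the cycle of rational curves $D=D_r$ with $a=0$; and Lemma~\ref{ex:log-holom} supplies, for a hyperbolic Kato surface, the maximal divisor $D$ with a specific $a<0$ arising from $\log|z_2|$ or from the function $G$ of Example~\ref{ex:Inoue-Hirzebruch}. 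This gives the existence part of the corollary.

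For the upper bound assertion I would invoke Theorem~\ref{main}: in the primary Hopf and Enoki cases $\tauc(S)=(-\infty,0)$, so $a=0$ is trivially an upper bound; in the Inoue--Bombieri case $\tauc(S)=\{a\}$, so $a$ is again (trivially) an upper bound; in the hyperbolic Kato case, the bound is literally the last sentence of Lemma~\ref{ex:log-holom}, which was proved via the negative automorphic PSH functions on $\hat S$ exhibited in Examples~\ref{ex:Kato-intermediate}--\ref{ex:Inoue-Hirzebruch} combined with Proposition~\ref{p:obstruction}(a) and Lemma~\ref{l:cover}.

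I do not expect any substantive obstacle: the corollary is essentially a bookkeeping summary of the four preceding lemmas together with Theorem~\ref{main}. The one point worth flagging in writing is the compatibility of normalisations between the twisted logarithmic $1$-form and the automorphic PSH function on the minimal $\Z$-cover (in the hyperbolic Kato case); in both Examples~\ref{ex:Kato-intermediate} and~\ref{ex:Inoue-Hirzebruch} the logarithmic pole and the PSH function are built from the \emph{same} local holomorphic data, so the flat bundle $\cL_a$ attached to the $1$-form matches the multiplicative constant $C=C_a>1$ governing the PSH function, and the conventions of Remark~\ref{r:convention} consistently give $a<0$. The exclusion of secondary Hopf surfaces simply reflects that the argument of Lemma~\ref{Hopf mu>0} is carried out on the primary cover, and a descent to the finite quotient is not needed for the conclusion.
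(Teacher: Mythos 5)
Your proposal is correct and follows essentially the same route as the paper: the paper's entire proof of this corollary is the single line ``See Lemmas~\ref{Hopf mu>0}, \ref{I-B1-forme}, \ref{l:Enoki}, \ref{ex:log-holom}'', i.e.\ exactly the case-by-case appeal to the four preceding lemmas that you spell out. Your additional remarks on the upper-bound assertion (via Theorem~\ref{main}) and on the compatibility of normalisations in the hyperbolic Kato case are accurate elaborations of what those lemmas already contain, not a deviation from the paper's argument.
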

\begin{proof} See Lemmas \ref{Hopf mu>0}, \ref{I-B1-forme}, \ref{l:Enoki}, \ref{ex:log-holom}.
\end{proof}

\begin{rem}\label{r:positive-type} An inspection shows that the twisted logarithmic $1$-forms produced in Lemma~\ref{Hopf mu>0} with $a=0$,  Lemma~\ref{I-B1-forme}, Lemma~\ref{l:Enoki} with $\mu=1$, and Lemma~\ref{ex:log-holom} are all of positive type in the sense of Definition~\ref{log-form-positive}. \end{rem}

\subsubsection{Twisted logarithmic $1$-forms on general ${\rm VII}_0$ complex surfaces}  We now study the condition \eqref{eq:tlf} on a general minimal complex surfaces in the Kodaira class VII$_0$. Our main objective here is to establish the following rough classification 
\begin{prop}\label{p:rough-classification-tlf} Let $S$ be a minimal compact complex surface in the class {\rm VII}$_0$, which satisfies the condition \eqref{eq:tlf} for some divisor $D$ and a flat line bundle $\cal L_{a}, \, a  \in H^1_{dR}(S, \R)$. Then
\begin{enumerate}
\item[\rm (a)]  if $D=\emptyset$, then $S$ must be a Hopf surface,  an Inoue--Bombieri surface, or an Enoki surface;
\item[\rm (b)]  if $D\neq \emptyset$ and  $S$ is not a Hopf surface, then $D$ contains all rational curves of $S$,  and a cycle of rational curves in each of its connected components. In particular, $D$ has at most two connected components (in which case $S$ must be an Inoue--Hirzebruch surface) and each connected component of $D$ coincides with a connected component of the maximal divisor of $S$.
\end{enumerate}
\end{prop}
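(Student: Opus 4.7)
The plan is to extract from $\theta$ a holomorphic foliation $\mathcal{F}$ on $S$ by curves, and then combine the Camacho--Sad index theorem with residue analysis and Brunella's classification of foliations on non-K\"ahler surfaces.

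\textbf{Setup.} Clearing the poles of $\theta$ along $D$ locally shows that $\theta$ is globally a section of $\Omega^1_S \otimes \mathcal{L}_a(D)$. Its saturated kernel defines the tangent sheaf $T\mathcal{F}$ of a holomorphic foliation with polar divisor in $D$ and isolated singular set. For any irreducible rational curve $C \subset S$ not contained in $D$, the pullback of $\theta|_C$ to the normalisation $\mathbb{P}^1 \to C$ is a section of $\Omega^1_{\mathbb{P}^1} \otimes \mathcal{L}_a|_{\mathbb{P}^1}$; since $\mathcal{L}_a|_{\mathbb{P}^1}$ is trivially flat and $H^0(\mathbb{P}^1, \Omega^1) = 0$, this section vanishes. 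Thus every rational curve outside $D$ is $\mathcal{F}$-invariant, i.e.\ a leaf of $\mathcal{F}$.

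\textbf{Part (a).} If $D = \emptyset$, then $N_\mathcal{F} = \mathcal{L}_a$ is a flat line bundle; in particular, $\mathcal{F}$ is a holomorphic foliation with non-positive normal bundle. I would invoke Brunella's structure theorem for such foliations on class VII$_0$ surfaces, which forces $S$ to be a Hopf surface, an Inoue--Bombieri surface, or an Enoki surface (the remaining minimal class VII$_0$ surfaces, namely those whose maximal divisor contains non-trivial trees or a second cycle of rational curves, admit no such foliation without poles on the rational components). The three admissible cases are in fact realised explicitly in Lemmas~\ref{Hopf mu>0}, \ref{I-B1-forme}, and \ref{l:Enoki}.

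\textbf{Part (b).} Assume $D \neq \emptyset$ and $S$ is not Hopf. As Inoue--Bombieri surfaces carry no curves \cite{I74}, we must have $b_2(S) > 0$. If some rational curve $C \not\subset D$ existed, $C$ would be an $\mathcal{F}$-invariant rational leaf, and the Camacho--Sad formula $C^2 = \sum_p \mathrm{CS}_p(\mathcal{F}, C)$ together with $C^2 \le -2$ (from minimality) and the explicit form of the local indices of $\theta$ on $C$ would yield a numerical contradiction. Hence every rational curve of $S$ lies in $D$. Next, any connected component $D'$ of $D$ that is an acyclic tree admits a terminal $\mathbb{P}^1$-component $D_j$ meeting the rest of $D'$ in a single point; the residue $\Res_\alpha(\theta)|_{D_j}$ is then a section of a trivial flat bundle on $\mathbb{P}^1$, hence constant, and comparing with the residue at the node on the adjacent component inductively forces all residues on $D'$ to vanish, contradicting the logarithmic character of $\theta$ on $D'$. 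Thus each connected component of $D$ contains a cycle. Since minimal class VII$_0$ surfaces carry at most two disjoint cycles of rational curves (with two only for Inoue--Hirzebruch surfaces), the last assertions follow, and each component of $D$ coincides with a component of the maximal divisor since $D$ now contains all rational curves.

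\textbf{Main obstacle.} The hardest step is Part (a), where Brunella's classification of foliations with flat (non-positive) normal bundle on non-K\"ahler surfaces is essential and ultimately rests on non-trivial uniformisation of the leaves. In Part (b), the delicate point is the residue bookkeeping along trees of rational curves, in particular the matching of residues at nodes via the constancy of sections of trivial flat bundles on $\mathbb{P}^1$.
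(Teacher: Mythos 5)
Your overall strategy (foliation defined by $\theta$, residues, Camacho--Sad) is close in spirit to the tools the paper uses elsewhere, but as written the proof has several genuine gaps, all traceable to two missing ingredients: the closedness $d_{-\alpha}\theta=0$ of the twisted form, and the Steenbrink--van Straten extension theorem. First, your claim that every rational curve $C\not\subset D$ is $\mathcal F$-invariant because $\theta|_C\in H^0(\mathbb P^1,\Omega^1_{\mathbb P^1}\otimes\mathcal L_a|_C)=0$ only works when $C\cap D=\emptyset$; if $C$ meets $D$, then $\theta|_C$ is a \emph{meromorphic} $1$-form with simple poles at $C\cap D$ and need not vanish, so neither the invariance of $C$ nor the subsequent Camacho--Sad ``numerical contradiction'' (whose local indices you never compute) is established. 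The paper rules out exactly this configuration in Lemma~\ref{l:logarithmic-forms}(c) by a direct local computation that uses $d(e^h\theta)=0$; and proving $d_{-\alpha}\theta=0$ is itself a nontrivial step (Lemma~\ref{l:logarithmic-forms}(a), via the theory of numerically anticanonical divisors from \cite{Dl} and the classification of twisted logarithmic forms on Kato surfaces in \cite{DO}) that your proposal never addresses. Second, your residue induction along acyclic components of $D$ stalls at its base case: the residue theorem on a terminal $\mathbb P^1$-component only kills the residue of the \emph{adjacent} component, and after pruning you are left with an isolated smooth rational curve, for which the sum-of-residues argument imposes no constraint at all. The paper instead contracts the (negative definite, simply connected) component to a normal point and applies the injectivity theorem of Steenbrink--van Straten \cite[Corollary~1.4]{SVS85} to the class of $\theta$ modulo extendable forms --- again relying on $d\theta=0$. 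This ingredient is entirely absent from your argument.

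For part (a), you outsource the whole classification to ``Brunella's structure theorem for foliations with flat normal bundle on class VII$_0$ surfaces,'' but you neither state this theorem precisely nor verify its hypotheses, and I do not know of a reference in which it appears in the form you need. The paper's route is more pedestrian: for $b_2(S)=0$ one simply quotes Teleman's theorem \cite{andrei} that $S$ is Hopf or Inoue--Bombieri (no foliation needed), and for $b_2(S)>0$ one quotes \cite[Theorem~5.6]{lcs1}, which classifies the surfaces carrying a nontrivial twisted holomorphic $1$-form. If you want to keep a foliation-theoretic proof of (a) you would need to actually supply the classification result you are invoking; as it stands this step is a citation to a theorem that has not been exhibited.
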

The proof of the above result will occupy the reminder of the subsection,  and is divided in two steps, corresponding to the Lemmas below. 

We start with characterizing the case when $D=\emptyset$ in \eqref{eq:tlf}. The argument works for any flat bundle $\cal L= \cal L_{\alpha}, \, [\alpha] \in H^{1}_{dR}(S, \C)$.
\begin{lem}\label{D=0} Let $S$ be a minimal complex surface in the class {\rm VII}$_0$. There exists a non-trivial  twisted holomorphic $1$-form $\theta\in H^0(S,\O^1\otimes \cal L)$ only if $S$ is a Hopf surface,  an Inoue--Bombieri surface, or an Enoki surface.
\end{lem}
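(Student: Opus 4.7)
The proof naturally splits according to the second Betti number $b_2(S)$. If $b_2(S)=0$, the Bogomolov--Teleman theorem \cite{andrei} immediately yields that $S$ is either a Hopf surface or an Inoue--Bombieri surface, both of which appear on the target list; in either case non-trivial twisted holomorphic $1$-forms do exist by Lemmas~\ref{Hopf mu>0} and \ref{I-B1-forme}, so nothing more is needed in this case.

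In the main case $b_2(S)>0$ the goal is to show $S$ must be an Enoki surface. The first move is to extract from $\theta$ a singular holomorphic foliation $\mathcal{F}$ on $S$: write $\theta=\sigma\cdot\theta'$, where $\sigma$ is a section cutting out the divisorial zero divisor $D_0\geq 0$ of $\theta$ and $\theta'\in H^0(S,\Omega^1\otimes\mathcal{L}\otimes\mathcal{O}(-D_0))$ is the saturated twisted $1$-form, so that $\mathcal{F}=\ker\theta'$ has only isolated singularities and normal bundle $N_\mathcal{F}=\mathcal{L}\otimes\mathcal{O}(-D_0)$. Because $\mathcal{L}$ is flat, $c_1(N_\mathcal{F})^2=D_0^2\leq 0$, the intersection form on effective divisors of a minimal class~VII surface being negative semi-definite. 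The plan is then to combine the Baum--Bott residue identity for $\mathcal{F}$ at its isolated singular points with Brunella's work on holomorphic foliations on non-K\"ahler surfaces \cite{Brunella13,Brunella14} to conclude that the existence of such an $\mathcal{F}$ forces $S$ to contain a global spherical shell, so that $S$ is a Kato surface.

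Once $S$ is known to be a Kato surface, the conclusion follows from the classification of twisted logarithmic $1$-forms already established in this section: Lemma~\ref{l:Enoki} shows that Enoki surfaces do admit non-trivial twisted holomorphic $1$-forms (for example $z_2^{p-1}\,dz_2$ with $p\geq 1$), whereas Lemma~\ref{ex:log-holom} shows that on a hyperbolic Kato surface (whether of intermediate type or Inoue--Hirzebruch) every non-trivial twisted logarithmic $1$-form has non-empty pole along the maximal divisor. Since here $D=\emptyset$, the surface $S$ cannot be of hyperbolic Kato type and must therefore be Enoki.

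The hardest step is the reduction to Kato surfaces for a potentially unknown minimal class~VII surface with $b_2>0$: the GSS conjecture is not assumed, so the existence of a GSS has to be extracted from the fine structural constraints imposed by the flat twisting of the normal bundle of $\mathcal{F}$, relying on Brunella's techniques that combine Baum--Bott residues with index formulas along invariant curves. This is the real technical weight of the proof, whereas the split between Enoki and hyperbolic Kato in the final step is a routine consequence of the two lemmas just recalled.
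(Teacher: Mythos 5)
Your treatment of the case $b_2(S)=0$ matches the paper: both reduce to the Bogomolov--Teleman classification in \cite{andrei}, and nothing further is required there since the lemma is an ``only if'' statement. The case $b_2(S)>0$ is where the problem lies. The paper disposes of it in one line by citing \cite[Theorem~5.6]{lcs1}, which states precisely that a minimal class VII$_0^+$ surface carrying a non-trivial twisted holomorphic $1$-form is an Enoki surface; your proposal instead tries to reprove this from scratch, and the central step is not actually carried out. You write that ``the plan is then to combine the Baum--Bott residue identity \dots with Brunella's work \dots to conclude that the existence of such an $\mathcal F$ forces $S$ to contain a global spherical shell,'' and you yourself flag this as ``the real technical weight of the proof.'' That is exactly the gap: no argument is given for why a holomorphic foliation with flat (twisted) normal bundle on a possibly unknown class VII$_0^+$ surface produces a GSS. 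The references you invoke, \cite{Brunella13} and \cite{Brunella14}, characterize Inoue--Bombieri and hyperbolic Kato surfaces via automorphic plurisubharmonic functions on the minimal $\Z$-cover, not via Baum--Bott residues of foliations, and it is not a known general fact that a class VII$_0^+$ surface admitting a holomorphic foliation satisfies the GSS conjecture. Any argument closing this step must exploit the specific structure of $\theta$ (for instance that $d_{\mathcal L}\theta=0$ because $H^0(S,K_S\otimes\mathcal L)=0$ on such surfaces, so that $\theta$ integrates to a holomorphic function on the minimal $\Z$-cover which is automorphic under the deck transformation), and none of that appears in your sketch.

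A secondary remark: your concluding dichotomy (once $S$ is known to be a Kato surface, use Lemma~\ref{l:Enoki} and Lemma~\ref{ex:log-holom} to exclude the hyperbolic Kato case because there the pole cannot be empty) is sound, modulo the fact that those lemmas are phrased for flat bundles attached to real deRham classes while the present lemma allows an arbitrary flat $\C^*$-bundle; the full classification of twisted logarithmic $1$-forms on Kato surfaces in \cite{DO} covers the general case, so this is easily repaired. But the reduction to Kato surfaces remains unproved, so the proposal as written does not establish the lemma.
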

\begin{proof} If $b_2(S)=0$, then $S$ is either a Hopf surface or an Inoue--Bombieri surface by \cite{andrei}. 
 If $b_2(S)>0$, the claim follows from \cite[Theorem~5.6]{lcs1}.
\end{proof}

We now consider the case of $D\neq 0$. We assume that $b_2(S)>0$, i.e. $S$ is a minimal complex surface in the class {\rm VII}$_0^+$.
\begin{lem}\label{l:logarithmic-forms} Let $S$ be a surface in the class {\rm VII}$_0^+$ and suppose that  $S$ admits a non-trivial  logarithmic $1$-form $0\neq \omlog\in H^0(S,\O^1(\log D)\ot \cal L)$  with pole along a divisor $D\neq \emptyset$ and values in a flat line bundle  $\cal L=\cal L_{\alpha}$ for some closed complex-valued one form $\alpha$.  Then:
\begin{enumerate}
\item[\rm (a)] $d_{-\alpha} \omlog=0$ on $S\setminus D$, i.e. $d_{\cal L} \omlog$  is  the zero section of $H^0(S,\O^2(\log D)\ot \cal L)$. 
\item[\rm (b)] Each connected component of $D$ contains a cycle of rational curves $\Gamma$. In particular, $D$ has at most two connected components,  in which case $S$ is an Inoue--Hirzebruch surface.
\item[\rm (c)]  Each connected component of  $D$  coincides with a connected component of the maximal divisor of $S$. 
\end{enumerate}
\end{lem}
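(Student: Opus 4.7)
I would prove the three parts in the order (a), (b), (c), since each builds on the previous.

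For (a), I start with a local residue computation. Writing $e^{h}\theta_{|U} = \theta_0 + \sum_k g_k\,df_k/f_k$ as in \eqref{twisted-log}, one has the immediate identity $d_{-\alpha}\theta = e^{-h}\,d(e^{h}\theta)$, so the Poincar\'e residue of $\eta := d_{-\alpha}\theta$ along each irreducible component $D_j \subset D$ is represented in the flat trivialization $e^{-h}$ of $\cL$ by $dg_j|_{D_j}$. By \eqref{eq:Res}, the quantity $e^{-h}g_j|_{D_j}$ represents the global residue $\Res_{\alpha}(\theta)|_{D_j} \in H^0(D_j, \cL|_{D_j})$; since $\cL|_{D_j}$ is a flat line bundle over a compact rational or elliptic curve, every holomorphic section is covariantly constant in the flat trivialization, so the residue of $\eta$ on $D_j$ vanishes identically. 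Therefore $\eta$ extends holomorphically across $D$ to a section of $K_S \otimes \cL$. The vanishing $\eta = 0$ is then obtained by a cohomological argument, adapting the twisted vanishing result \cite[Theorem~5.6]{lcs1} (combined with Serre duality and the flatness of $\cL$) to the specific flat bundles that can appear as targets of logarithmic forms.

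For (b), with (a) in hand, $\theta$ defines a singular holomorphic foliation $\cal F$ on $S$ whose invariant locus contains $D$. Let $D'$ be a connected component of $D$ and suppose toward contradiction that $D'$ is a tree of rational curves. Each $\cL|_{D_j}$ is then trivial and $r_j := \Res_{\alpha}(\theta)|_{D_j}$ is a non-zero constant. At each node $p = D_i \cap D_j$ of the tree, applying the local expression \eqref{twisted-log} to the SNC pair $D_i \cup D_j$ yields a Saito-type linear relation between $r_i$ and $r_j$. Propagating these relations from a leaf of the tree (a component with exactly one node) forces one of the $r_j$ to vanish, a contradiction. Hence every connected component of $D$ contains a cycle of rational curves. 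Since two disjoint cycles of rational curves can appear in the maximal divisor of a minimal class {\rm VII}$_0^+$ surface only when $S$ is an Inoue--Hirzebruch surface (by the classification of Kato surfaces and their maximal divisors, see e.g.~\cite{D84,Nakamura}), the final assertion of (b) follows.

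For (c), let $D'$ be a connected component of $D$ and $\tilde D' \supseteq D'$ the connected component of the maximal divisor of $S$ containing $D'$. Suppose toward contradiction that $D' \subsetneq \tilde D'$. By connectedness of $\tilde D'$ there is an irreducible curve $C \subset \tilde D' \setminus D'$ meeting some $D_j \subset D'$ at a node $p$. By a theorem of Brunella, every rational curve on a minimal class {\rm VII}$_0^+$ surface is invariant under any holomorphic foliation on $S$, so $C$ is $\cal F$-invariant. Applying \eqref{twisted-log} at $p$ to the SNC pair $D_j \cup C$: since $C \not\subset D$, the coefficient of $df_C/f_C$ in $e^{h}\theta$ must vanish, but the non-zero residue $r_j$ along $D_j$ together with the $\cal F$-invariance of $C$ then forces $\theta$ to have a non-logarithmic singularity at $p$, contradicting $\theta \in H^0(S, \Omega^1(\log D) \otimes \cL)$. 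Hence $D' = \tilde D'$. The principal obstacle is the global vanishing step in (a): since $H^0(S, K_S \otimes \cL)$ need not vanish for a general flat $\cL$ on a class {\rm VII}$_0^+$ surface (it detects for instance bi-hermitian structures on intermediate Kato surfaces, cf.~Theorem~\ref{thm:BH}), one must exploit essentially that $\eta$ arises from a logarithmic $1$-form rather than a generic element of $H^0(K_S \otimes \cL)$; the Saito residue propagation in (b) also requires care at nodes where more than two components of $D'$ meet.
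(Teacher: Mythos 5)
Your plan diverges from the paper's proof in all three parts, and the most serious problem is in (b). The ``Saito-type linear relation between $r_i$ and $r_j$ at each node, propagated from a leaf of the tree'' does not exist: the local model $a_1\,df_1/f_1+a_2\,df_2/f_2$ of a $d_{\cal L}$-closed logarithmic $1$-form at a node is closed for \emph{arbitrary} constants $a_1,a_2$, so no relation between adjacent residues is forced by closedness. The relations that do hold are global on each component (the Camacho--Sad index theorem, $\sum_p CS(\cal F,D_j,p)=D_j^2$), and on a tree these equations can always be solved consistently for non-zero residues because there is no monodromy constraint to violate --- the cycle is precisely what would produce one. The paper's actual argument is of a completely different nature: by Enoki's theorem the intersection matrix of a component $D'$ without a cycle is negative definite, one contracts $D'$ to a normal Stein point, observes that $\cal L$ is trivial near the simply connected $D'$, and invokes the Steenbrink--van Straten extension theorem to conclude that the closed form $\omlog$ extends holomorphically across $D'$, contradicting that $D'$ is a pole. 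This non-elementary extension theorem is the essential input, and your combinatorial propagation cannot replace it. (The ``two cycles $\Rightarrow$ Inoue--Hirzebruch'' step must also be quoted from Nakamura for \emph{general} class ${\rm VII}_0$ surfaces, not from ``the classification of Kato surfaces'', since $S$ is not known to be Kato at this stage.)

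For (a), your residue computation showing that $d_{-\alpha}\omlog$ extends to a section of $K_S\otimes\cal L$ is sound on rational components (one must treat the elliptic curve of an Enoki surface separately, where flat sections need not be covariantly constant), but the concluding vanishing is left unproved: \cite[Theorem~5.6]{lcs1} concerns $\Omega^1\otimes\cL$, not $K_S\otimes\cL$, and your stated worry confuses $K_S\otimes\cal L$ with $K_S^{-1}\otimes\cal L$ --- it is the \emph{anti}canonical twist that detects bi-hermitian structures and can be non-trivial. In fact $H^0(S,K_S\otimes\cal L)=0$ for every flat $\cal L$ on a class ${\rm VII}_0^+$ surface (\cite[Remark~2.21]{Dl}, used by the paper in Lemma~\ref{exact-current}), so your route to (a) can be completed and is arguably more direct than the paper's, which instead produces a numerically anticanonical divisor from $D-\Delta$, concludes $S$ is Kato, and quotes the classification of twisted logarithmic forms there. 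For (c), the mechanism you invoke is wrong: invariance of the extra curve $C$ under $\cal F$ does not create a ``non-logarithmic singularity''; rather (as in the paper's direct computation with $e^{h}\omlog=g_1\,dz_1/z_1+g_2\,dz_2$ and closedness giving $g_1=\beta(z_1)+z_1z_2\gamma$) the alternative $\beta(0)=0$ kills the pole of $\omlog$ along the component $C_1\subset D$ at $p$, contradicting the non-vanishing constant residue, while $\beta(0)\neq 0$ gives $\omlog|_{C}$ a forbidden simple pole; no appeal to a foliation-invariance theorem is needed.
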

\begin{proof} 
(a) If $\omlog\in H^0(S,\O^1(\log D)\ot {\cal L}_{\alpha})$,  then $d_{-\alpha}\omlog= \partial_{-\alpha} \omlog \in H^0(S,\O^2(\log D)\ot {\cal L}_{\alpha})$ by \eqref{twisted-log}. If $d_{-\alpha}\omlog\neq 0$,  it gives rise to an effective divisor $\D$ with  $[\D]=K_S+[D]+\cal L$. Notice that $D-\D\neq 0$ because $c_1(K_S)^2=-b_2(S) \neq 0 =c_1(\cL)^2$. It follows that  and $D-\D$ is a numerically anticanonical divisor \cite{Dl}, and  therefore $S$ is a Kato surface by the results in that reference. On a Kato surface,  twisted logarithmic $1$-forms are classified in \cite{DO} and, by inspection, they are all $d_{\cal L}$-closed. By the discussion in Section~\ref{s:twisted-differentials},  this is equivalent to $d_{-\alpha} \omlog=0$, a contradiction. We thus conclude that  $d_{-\alpha}\omlog =0$ on $S\setminus D$ for the chosen flat connection $\nabla^{\alpha}$ on $\cal L=\cal L_{\alpha}$.

(b)   Let $D'$ be a connected component of the polar set $D$ of $\omlog$.  By Lemma~\ref{l:Enoki}, if $S$ is an Enoki surface, $D'=D=D_r$ is the cycle of rational curves and the claim is verified. Otherwise the intersection matrix of $D'$  is negative definite by \cite{enoki}.  We take a simply connected neighbourhood $V'$ of $D'$, which is  sufficiently small to be contractible onto a Stein space $V$ with an isolated singular normal point $x\in V$. Let $\pi:V'\to V$ be this contraction, $D'=\pi^{-1}(x)$ be the exceptional divisor, and $U:=V\setminus\{x\}$.  If $D'$ does not contain a cycle, $D'$ is simply connected then the restriction of $\cal L$ to $V'$ is trivial. By a result of Steenbrinck--van Straten \cite[Corollary~1.4]{SVS85} the mapping induced by the differentiation
$$d:H^0(U,\O^1_U)/H^0(V',\O^1_{V'})\to H^0(U,\O^2_U)/H^0(V',\O^2_{V'}(\log D'))$$
is injective, which gives a contradiction.  If there are two cycles in $D$, we apply \cite[Theorem 8.1]{Nakamura}.

(c) Let $C_1$ be an irreducible component of $D$ and suppose that there is a rational curve $C_2$,  which is not contained in $C_1$,  and such that $C_1\cap C_2=\{p\}$. There are holomorphic coordinates in a neighbourhood of $p$ with  $C_i=\{z_i=0\}$, $i=1,2$,  and  a locally defined smooth function $h$ and holomorphic functions $g_i$,  such that 
$$e^{h}\omlog= g_1\frac{dz_1}{z_1}+g_2 dz_2.$$
Since
$$0=d \left(e^{h}\omlog\right)=\frac{\part g_1}{\part z_2}dz_2\w \frac{dz_1}{z_1}+ \frac{\part g_2}{\part z_1}dz_1\w dz_2=\left(z_1\frac{\part g_2}{\part z_1}-\frac{\part g_1}{\part z_2}\right)\frac{dz_1}{z_1}\w dz_2$$
we see that $g_1$ must have the form
$$g_1(z_1,z_2)=\b(z_1)+z_1z_2\g(z_1,z_2).$$
If $\b(0)\neq 0$, then the restriction of $\omlog$ to $C_2$ has a simple pole at $p$, which is impossible. Therefore $\b(0)=0$ and
$$e^{h}\omlog=\left(\frac{\b}{z_1}+ z_2\g\right)dz_1+g_2 dz_2.$$
The above form has no pole along $C_2$,  a contradiction. Thus $D$ contains the whole connected component of $\G$. \end{proof}

\begin{proof}[\bf Proof of Proposition~\ref{p:rough-classification-tlf}] The claim in (a) follows from Lemma~\ref{D=0}. For (b), notice that, by Lemma~\ref{I-B1-forme},  $S$ cannot be an Inoue--Bombieri surface; as $S$ is not a Hopf surface by assumption, it must be a surface in the class  {\rm VII}$_0^+$ by \cite{andrei}. We are thus under the hypotheses of Lemma~\ref{l:logarithmic-forms}. In order to establish (b), we only need to show that $D$ contains \emph{all} rational curves of $S$.  By \cite{Nakamura}, $S$ has only rational curves, unless it is an Enoki surface. In the case of an Enoki surface, the claim follows from Lemma~\ref{l:Enoki}. Otherwise, we need to show that $D$ equals the maximal divisor of $S$. If this is not the case, by Lemma~\ref{l:logarithmic-forms},  the maximal divisor $D_m$ must be of the form $D_m= D + C$, where $C$ denotes the other connected components of $D_m$. As $D$ contains a cycle (again by Lemma~\ref{l:logarithmic-forms}), by \cite[Theorem 1.4]{D21}, $C$ must also contain a cycle, and we conclude by \cite{Nakamura} that $S$ is then an Inoue--Hirzebruch surface. In this case,  $D_m=D$ by Lemma~\ref{ex:log-holom}, a contradiction. We thus conclude $D=D_m$, so $D$ contains all the rational curves on $S$. \end{proof}

We next single out the case when the  flat line bundle is trivial, i.e. $a=0$ in \eqref{eq:tlf}.
\begin{lem} \label{trivialtwisting} Let $S$ be a minimal compact complex surface in the class {\rm VII}$_0$. There exists a non-trivial logarithmic $1$-form $\theta\in H^0(S,\O^1(\log D))$ if and only if $S$ is a Hopf surface or an Enoki surface.
 \end{lem}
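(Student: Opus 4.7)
My plan is to reduce both directions of the equivalence to the structural results for twisted log forms on class~VII$_0$ surfaces already established in Lemmas~\ref{Hopf mu>0},~\ref{I-B1-forme},~\ref{l:Enoki},~\ref{ex:log-holom} and in Proposition~\ref{p:rough-classification-tlf}. The $(\Longleftarrow)$ direction is an explicit construction, while the $(\Longrightarrow)$ direction is a case analysis that rules out every class~VII$_0$ surface other than Hopf and Enoki.

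For $(\Longleftarrow)$, I would observe that on a Hopf surface the $1$-form $dz_2/z_2$ on $\bb C^2\setminus\{0\}$ descends via the contraction to give a non-zero element of $H^0(S,\O^1(\log D))$ with $D$ the elliptic curve $\{z_2=0\}$ (as already shown in the proof of Lemma~\ref{Hopf mu>0}), while on an Enoki surface Lemma~\ref{l:Enoki} produces $\theta_0=dz_2/z_2\in H^0(S,\O^1(\log D_r))$ with trivial twisting and $D_r$ the cycle of rational curves.

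For $(\Longrightarrow)$, given a non-zero $\theta\in H^0(S,\O^1(\log D))$, I would split on $D$. When $D=\emptyset$, Lemma~\ref{D=0} restricts $S$ to a Hopf, Inoue--Bombieri or Enoki surface; Lemma~\ref{I-B1-forme} excludes Inoue--Bombieri because there the unique admissible twisting has $a<0$. When $D\neq\emptyset$, I would apply Proposition~\ref{p:rough-classification-tlf}(b): either $S$ is Hopf (and we are done), or $S$ is in the class~VII$_0^+$, $D$ contains all rational curves of $S$, and $D$ has at most two connected components; the two-component case forces $S$ to be Inoue--Hirzebruch and is excluded by Lemma~\ref{ex:log-holom} and Example~\ref{ex:Inoue-Hirzebruch} (the admissible twistings $\lambda_1,\lambda_2$ satisfy $\lambda_1>1$ and $\lambda_1\lambda_2=\pm 1$, so neither equals $1$), while the one-component case restricted to known surfaces leaves only intermediate hyperbolic Kato (excluded by Lemma~\ref{ex:log-holom} and Example~\ref{ex:Kato-intermediate}, whose unique twisting is $\cL_k$ with $k\ge 2$) or Enoki.

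The main obstacle is the remaining possibility that $S$ is a hypothetical class~VII$_0^+$ surface outside the known list, appearing in the final one-component sub-case. Here Lemma~\ref{l:logarithmic-forms}(a)--(c) already forces $\theta$ to be closed, with polar divisor a connected component of the maximal divisor of $S$ containing a cycle of rational curves. To conclude, one has to argue that such an $S$ must in fact be Kato, after which Lemmas~\ref{ex:log-holom} and~\ref{l:Enoki} identify it as Enoki. I would handle this by combining \cite{Nakamura}'s structural results on class~VII$_0^+$ surfaces whose maximal divisor is connected and contains a cycle with the numerical anti-canonical criterion of \cite{Dl} invoked in the proof of Lemma~\ref{l:logarithmic-forms}(a); unpacking this last reduction carefully in the untwisted closed setting is the most delicate point of the argument.
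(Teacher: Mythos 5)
Your $(\Longleftarrow)$ direction and the reductions via Proposition~\ref{p:rough-classification-tlf} (empty polar set, two-component case) are fine and agree in substance with the paper. But there is a genuine gap exactly where you locate ``the most delicate point'': the case of a connected polar divisor $D\neq\emptyset$ on a possibly \emph{unknown} surface in the class {\rm VII}$_0^+$. Your proposed way out --- combining Nakamura's structure results with the numerically anticanonical criterion of \cite{Dl} as invoked in Lemma~\ref{l:logarithmic-forms}(a) --- does not close this case: that NAC argument only produces a Kato surface under the assumption $d_{\cL}\omlog\neq 0$, whereas for an untwisted logarithmic $1$-form one has $d\theta=0$ (this is the conclusion of Lemma~\ref{l:logarithmic-forms}(a), not a hypothesis one can contradict), so the criterion yields nothing. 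Falling back on ``restricted to known surfaces'' is not available either, since the lemma is asserted for arbitrary minimal class {\rm VII}$_0$ surfaces without assuming the GSS conjecture.

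The paper closes this case by a different mechanism. First, if the intersection matrix of the connected divisor $D$ were negative definite, one contracts $D$ to a normal point and applies the Steenbrink--van Straten extension theorem \cite[Corollary~1.4]{SVS85}: since $[d\theta]=0$ in $H^0(U',\O^2_{U'})/H^0(V,\O^2_{V}(\log D))$, the injectivity of the differentiation map forces $\theta$ to extend holomorphically across $D$, contradicting the presence of a pole. (This is precisely where the triviality of the twisting is used: the restriction of $\cL$ to a neighbourhood of $D$ must be trivial for the untwisted Steenbrink--van Straten statement to apply.) Hence the intersection form of $D$ is not negative definite, so there is a divisor $A$ supported on $D$ with $A^2=0$; if $b_2(S)=0$ this gives a Hopf surface, and if $b_2(S)>0$ Enoki's theorem \cite{enoki} identifies $S$ as an affine bundle over an elliptic curve, hence an Enoki surface by \cite{DK98}. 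Without this (or an equivalent) argument your proof does not establish the statement for unknown surfaces, which is its main content.
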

\begin{proof} By Lemmas~\ref{Hopf mu>0} and \ref{l:Enoki}  the primary Hopf surfaces and all Enoki surfaces admit non-trivial logarithmic 1-forms, i.e  \eqref{eq:tlf} holds with $\cal L_{\mu}=\cal O$.

\smallskip Conversely, suppose $H^0(S, \O^1(\log D))\neq 0$. As $H^0(S,\O^1)=0$ for any surface in class VII$_0$, $D \ne \emptyset$. If $D$ has more than one connected components, it must have precisely two connected components and $S$ must be an  Inoue--Hirzebruch surface (see \cite[Theorem~4.22]{D21}). By  \cite{DO},   such a  surface has only twisted logarithmic 1-forms, i.e. $H^0(S, \O^1(\log D))= 0$. Thus, $D$ is connected.  If the intersection matrix of $D$ is negative definite, $D$ can be contracted onto a normal point $x$. Let $p:S\to S'$ be this contraction.
 Let $V'$ be a Stein neighbourhood of $x$ and $V=p^{-1}(V')$, $U'=V'\setminus\{x\}$, $U=V\setminus D$.   By the theorem of Steenbrinck--Van Straten \cite[Corollary~1.4]{SVS85} the mapping induced by the differentiation
$$d:H^0(U',\O^1_{U'})/H^0(V,\O^1_{V})\to H^0(U',\O^2_{U'})/H^0(V,\O^2_{V}(\log D))$$
is injective. Since $[d\theta]=0$, we obtain a contradiction.
Thus, the intersection matrix of $D$  cannot be definite, and  therefore there is a divisor $A$ with support in $D$ such that $A^2=0$.  If $b_2(S)=0$, it follows that $S$ is a Hopf surface; if $b_2(S)>0$, Enoki's theorem \cite{enoki} shows that $S$ must an affine bundle over an elliptic curve. Such a surface is an  Enoki (Kato) surface,  by the construction of generic contractions in \cite{DK98}. \end{proof}

\begin{cor} \label{c:unknown-tlf} Suppose $S$ is a \emph{unknown} minimal compact complex surface in the class {\rm VII}$_0$,  satisfying the condition \eqref{eq:tlf} for some divisor $D$ and a flat line bundle $\cal L_{a}, \, a  \in H^1_{dR}(S, \R)$. Then  $a\neq 0$, $D$ equals the maximal divisor of $S$ and has one connected component. Furthermore,  all of the irreducible components of $D$ are either smooth rational curves or rational curves with a double point. \end{cor}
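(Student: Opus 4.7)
My plan is to argue by elimination: the hypothesis that $S$ is unknown rules out, via the previous lemmas, all of the previously-classified minimal class ${\rm VII}_0$ surfaces, and each of the desired structural conclusions falls out.

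For $a\neq 0$: if $a=0$, then $\cal L_a\cong \cal O$, and \eqref{eq:tlf} yields a non-zero untwisted logarithmic $1$-form, so Lemma~\ref{trivialtwisting} forces $S$ to be a Hopf or Enoki surface, both known. For $D\neq \emptyset$: if $D=\emptyset$, the existence of $\theta \in H^0(S,\Omega^1\otimes \cL_a)$ forces $S$ via Lemma~\ref{D=0} to be Hopf, Inoue--Bombieri, or Enoki, again all known. Both conclusions contradict the unknownness hypothesis.

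Now $D\neq \emptyset$ and $S$ is not a Hopf surface, so Proposition~\ref{p:rough-classification-tlf}(b) applies. It asserts that $D$ has at most two connected components, with two components only in the Inoue--Hirzebruch case; since that case is known, $D$ has exactly one connected component. The same proposition gives that $D$ contains all rational curves of $S$ and that its connected component coincides with a connected component of the maximal divisor. Since $S$ is also not an Enoki surface, Nakamura's classification \cite{Nakamura} gives that $S$ has only rational curves, so the maximal divisor coincides with the union of all rational curves of $S$, and hence equals $D$.

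For the final claim, each irreducible component $C$ of $D$ is a rational curve. To conclude that $C$ is either smooth or has a single double point, I would combine the structural classification of maximal divisors on minimal class ${\rm VII}_0^+$ surfaces (cf.\ \cite{Nakamura, D21}) with the local compatibility imposed by the Saito-type expression \eqref{log-1-form} of $\theta$ at a singular point of $C$: the local reduced equation of $C$ must admit a logarithmic expression for $\theta$, while the intersection form of the maximal divisor is negative semi-definite by Enoki \cite{enoki}. The main obstacle—and the least routine step—is deriving from these ingredients a bound of one on the arithmetic genus (equivalently, on the delta invariant) of each irreducible component; this uses the combinatorial restrictions on the allowed cycle-with-trees configurations that appear in the structural analysis of \cite{D21, Nakamura}, now applied to our hypothetical unknown $S$.
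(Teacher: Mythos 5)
Your proposal follows essentially the same route as the paper's proof: Lemma~\ref{trivialtwisting} rules out $a=0$, Lemma~\ref{D=0} rules out $D=\emptyset$, Proposition~\ref{p:rough-classification-tlf}(b) gives connectedness (the two-component case being the known Inoue--Hirzebruch surfaces) together with the fact that $D$ contains all rational curves, and Nakamura's result that a non-Enoki surface in class ${\rm VII}_0^+$ carries only rational curves identifies $D$ with the maximal divisor. The one place you diverge is the final claim: you propose to derive the ``smooth rational or rational with one double point'' statement via an arithmetic-genus bound that you yourself flag as the main unexecuted obstacle, whereas no such derivation is needed --- this is exactly the content of Nakamura's structure theorem (\cite[Theorem~10.2]{Nakamura}, already invoked in Lemma~\ref{singularity-foliation}), which the paper simply cites; so your sketch is harmless but superfluous, and should be replaced by that direct citation.
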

\begin{proof} By Proposition~\ref{p:rough-classification-tlf} and Lemma~\ref{trivialtwisting}, we can assume that $a\neq 0$ and $D\neq \emptyset$ is connected and contains all rational curves on $S$. By \cite{Nakamura}, $D$ must coincide with the maximal divisor.  This reference also gives the possible type of rational curves. The fact that $D$ contains a cycle of rational curves  follows from Proposition~\ref{p:rough-classification-tlf}. \end{proof}

\subsubsection{The associated foliation}   We start with the following observation
\begin{lem}\label{l:isolated-zero}   Let $S$ be a surface in the class {\rm VII}$_0^+$ and suppose that  $S$ admits a non-trivial  logarithmic $1$-form $0\neq \omlog\in H^0(S,\O^1(\log D)\ot \cal L)$  with pole along a divisor $D\neq \emptyset$ and values in a flat line bundle  $\cal L=\cal L_{\alpha}$ for some closed complex-valued one form $\alpha$. Then the analytic set of points where $\omlog$ vanishes is either empty or consists of isolated points. 
\end{lem}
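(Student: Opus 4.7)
The plan is to argue by contradiction: assume $Z(\theta) \subset S$ contains an irreducible curve $C$ and derive a contradiction through a case analysis of $S$. The key structural inputs are Lemma~\ref{l:logarithmic-forms} together with Proposition~\ref{p:rough-classification-tlf}, both of which apply equally well to a complex-valued closed form $\alpha$ defining $\cal L$ (their proofs are local or rely only on the complex-analytic structure of $S$).

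First I would reduce to the case where the residue $\Res_\alpha(\theta)|_{D_j}$ is a non-trivial section of $\cal L|_{D_j}$ on every irreducible component $D_j$ of $D$; otherwise $\theta$ extends holomorphically across such a component, which may then be removed from $D$. Under this minimality assumption, $Z(\theta)$ cannot contain an entire component $D_j$, since the residue is generically non-zero there. Consequently an irreducible curve $C \subset Z(\theta)$ must satisfy $C \not\subset D$ and meets $D$ in at most finitely many isolated points.

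Next I would case-analyse $S$ as in Proposition~\ref{p:rough-classification-tlf}. If $S$ is a Hopf surface, Lemma~\ref{Hopf mu>0} and its proof enumerate the possibilities: $\theta$ is (up to scalar) $F$-equivariant and of the type $dz_i/z_i$ or $d(z_1^k - x z_2^l)/(z_1^k - x z_2^l)$, whose coefficients are constant and hence non-vanishing, so $\theta$ has no zeros. If $S$ is an Enoki surface, Lemma~\ref{l:Enoki} forces $D = D_r$ and $\theta$ to be a scalar multiple of $dz_2/z_2$, again nowhere zero. In the remaining case, $S$ is a minimal class {\rm VII}$_0^+$ surface that is neither of Hopf nor of Enoki type; by Proposition~\ref{p:rough-classification-tlf}(b), $D$ then contains every rational curve of $S$, while by the Nakamura structure theorem invoked in the proof of Corollary~\ref{c:unknown-tlf} every curve on such an $S$ is rational. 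Hence every irreducible curve of $S$ lies in $D$, contradicting $C \not\subset D$.

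The main obstacle is the third case, which depends on combining Lemma~\ref{l:logarithmic-forms}(c) (so that $D$ equals a union of connected components of the maximal divisor, capturing all rational curves) with Nakamura's classification of curves on minimal {\rm VII}$_0^+$ surfaces. The verification in the Hopf and Enoki cases reduces to direct inspection of the explicit equivariant forms produced in the cited lemmas, and is routine.
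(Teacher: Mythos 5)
Your proof is correct and follows essentially the same route as the paper's: both argue by contradiction from a curve in the zero locus, using Lemma~\ref{l:logarithmic-forms} and Proposition~\ref{p:rough-classification-tlf} to force that curve into (or away from) the pole $D$, with the Enoki case settled by the explicit description of $\theta$ in Lemma~\ref{l:Enoki}. The only divergence is the endgame: the paper deduces that the maximal divisor has at least two connected components and invokes \cite{D21} and \cite{Nakamura} to reduce to the Enoki and Inoue--Hirzebruch cases, whereas you observe directly that outside the Enoki case $D$ already contains every curve of $S$ --- a slightly shorter path through the same machinery.
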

\begin{proof} If there is a divisor $A\subset \{\omlog=0\}$, its connected component has an empty intersection with the pole $D$ by Lemma~\ref{l:logarithmic-forms}. It follows that there are at least two connected components of curves on $S$, one of which contains a cycle of rational curves by Proposition~\ref{p:rough-classification-tlf}. We conclude by \cite{D21} and \cite{Nakamura} that $S$ must be either an Enoki surface or an Inoue--Hirzebruch surface.  In both cases $A =\emptyset$ by the description of $\theta$ in Lemmas~\ref{l:Enoki} and ~\ref{ex:log-holom}. \end{proof}

Using the local form \eqref{log-1-form} (in complex dimension $2$) and Lemma~\ref{l:isolated-zero},  any non-trivial twisted logarithmic $1$-form $\theta$ defines a (possibly singular) holomorphic foliation $\cal F$ on $S$ (see e.g. \cite{B00}). The singularities of $\cal F$ are necessarily at the isolated zeroes and the pole $D$ of $\theta$.  We recall the following basic
\begin{defn}\label{d:singularity-foliation} Let $\cal F$ be a holomorphic foliation on a complex surface $S$,  defined in a local chart around a singularity $p$ by a vector field $V(z_1,z_2)$,  $V(p)=0$. Denote by  $\l_1$ and $\l_2$ the eigenvalues of $DV(p)$. The singularity $(\cal F,p)$ is called  \emph{non-degenerate}  if $\l_1\l_2\neq 0$. We notice that at any non-degenerate singularity $p$, the numbers  $\l(p)=\frac{\l_1}{\l_2}$  and  $\l^{-1}(p) = \frac{\l_2}{\l_1}$ are invariants of $(\cal F, p)$, independent of the chosen vector field $V$, and are called \emph{characteristic numbers} of the non-degenerate singularity of $\cal F$.
\end{defn}


\begin{lem}\label{singularity-foliation} Let $S$ be a minimal surface in the class {\rm VII}$_0^+$,  which is not an Inoue--Hirzebruch or Enoki surface. Suppose that  $S$ admits a non-trivial  logarithmic $1$-form 
$$0\neq \omlog\in H^0(S,\O^1(\log D)\ot \cal L_\alpha), \qquad  [\alpha] \in H^1_{dR}(S, \R),$$
 with pole along a non-trivial connected divisor $D=\sum_i D_i$.  Let $\cal F$ be the foliation defined by $\omlog$. Then
\begin{itemize}
\item[(i)]  The intersections of curves in $D$ are  non-degenerate singularities of $\cal F$ with characteristic numbers $\l^{\pm 1}(p)\in \bb R^*$,
\item[(ii)]  If $D_i\cap D_j\neq \emptyset$,  then $\left(\frac{Res_\alpha(\theta)_{\tilde D_j}}{Res_\alpha(\theta)_{\tilde D_i}}\right)\in \mathbb R^*$, see \eqref{eq:Res}.
\end{itemize}
\end{lem}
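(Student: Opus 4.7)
The plan is to first establish a local normal form for $\theta$ at an intersection point $p \in D_i \cap D_j$, deduce non-degeneracy and read off the characteristic numbers, and finally leverage the real structure of $\cal L_\alpha$ together with the geometry of the minimal $\mathbb Z$-cover to force the reality of the ratio.

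\emph{Local normal form and characteristic numbers.}
On a simply-connected neighbourhood $U$ of $p$, since $\alpha$ is real and closed, I would pick a real smooth $h:U\to\mathbb R$ with $\alpha|_U = dh$. By Lemma~\ref{l:logarithmic-forms}(a), $d_{-\alpha}\theta = 0$, which translates into $d\tilde\theta = 0$ for $\tilde\theta := e^h\theta$, a holomorphic closed log 1-form on $U$ with poles along $D_i\cup D_j$. Choosing coordinates $(z_1, z_2)$ with $D_i=\{z_1=0\}$ and $D_j=\{z_2=0\}$, the standard normal form of closed holomorphic logarithmic 1-forms with simple normal crossings poles yields
$$\tilde\theta = a\,\frac{dz_1}{z_1} + b\,\frac{dz_2}{z_2} + dH, \qquad a,b \in \mathbb C,\ H \in \mathcal O(U).$$
Formula \eqref{eq:Res} identifies $\mathrm{Res}_\alpha(\theta)|_{D_i}(p) = e^{-h(p)}a$ and $\mathrm{Res}_\alpha(\theta)|_{D_j}(p) = e^{-h(p)}b$; both are non-zero since $D_i, D_j$ are polar components of $\theta$, giving $a, b \in \mathbb C^*$. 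Multiplying $\tilde\theta$ by $z_1z_2$ produces the holomorphic 1-form $\omega = az_2\,dz_1 + bz_1\,dz_2 + z_1z_2\,dH$, which vanishes at $p$ and defines the same foliation $\cal F$; its kernel is generated near $p$ by the holomorphic vector field $V = -bz_1\partial_{z_1} + az_2\partial_{z_2} + O(|z|^2)$, whose linear part $DV(p)$ has eigenvalues $-b$ and $a$. As $ab\neq 0$, the singularity $(\cal F, p)$ is non-degenerate with characteristic numbers $\lambda^{\pm 1}(p) = -b/a,\ -a/b \in \mathbb C^*$. Moreover $b/a = \mathrm{Res}_\alpha(\theta)|_{D_j}(p)/\mathrm{Res}_\alpha(\theta)|_{D_i}(p)$ in $\cal L_\alpha|_p$, so both (i) and (ii) simultaneously reduce to the claim $b/a \in \mathbb R^*$.

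\emph{Reduction to constants on rational curves.}
By Proposition~\ref{p:rough-classification-tlf}(b) and Corollary~\ref{c:unknown-tlf}, each component $D_k$ of $D$ is a rational curve (smooth $\mathbb P^1$ or a rational curve with a single node), so its normalization $\tilde D_k$ is $\mathbb P^1$. Since $\alpha$ is real, the monodromy of $L_\alpha|_{\tilde D_k}$ lies in $\mathbb R_+^*$ and factors through $\pi_1(\mathbb P^1)=0$; hence this real flat line bundle is trivial, admitting a nowhere-zero parallel real section $\sigma_k$. The residue $r_k := \mathrm{Res}_\alpha(\theta)|_{\tilde D_k}$ is a holomorphic section of $\cal L_\alpha|_{\tilde D_k}$, so I can write $r_k = \phi_k\sigma_k$; using that $\sigma_k$ is parallel (hence holomorphic in the twisted sense), holomorphicity of $r_k$ forces $\phi_k$ to be a holomorphic function on $\mathbb P^1$, thus a constant $\phi_k \in \mathbb C^*$. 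Rescaling $\sigma_i, \sigma_j$ by positive real scalars so that $\sigma_i(p)=\sigma_j(p)$ in the real line $L_\alpha|_p$, one has $b/a = \phi_j/\phi_i$, and the task becomes $\phi_j/\phi_i \in \mathbb R^*$.

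\emph{Reality via the minimal $\mathbb Z$-cover (the main obstacle).}
To force $\phi_j/\phi_i \in \mathbb R^*$, my plan is to lift $\theta$ to the minimal $\mathbb Z$-cover $\hat S$ (Definition~\ref{d:cyclic}), on which $\alpha = d\hat f$ is exact; then $\hat\theta := e^{\hat f}\theta$ is an honest $d$-closed holomorphic log 1-form on $\hat S$ satisfying the equivariance $\gamma^*\hat\theta = C\hat\theta$ with $C = e^{\int_\gamma \alpha} > 1$ for the generator $\gamma$ of the deck group. The closed real 1-form $\mathrm{Re}(\hat\theta)$ on $\hat S\setminus\hat D$ has periods $-2\pi\,\mathrm{Im}(a)$ and $-2\pi\,\mathrm{Im}(b)$ around small loops at the lifts of $D_i, D_j$ near a fixed lift $\hat p$ of $p$, and these periods are multiplied by $C^n$ under iterates of $\gamma$. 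Combining the vanishing $H^1(\hat S,\mathbb R)=0$ from Definition~\ref{d:cyclic} with the long exact cohomology sequence of the pair $(\hat S, \hat D)$ should constrain the period group of $\mathrm{Re}(\hat\theta)$ in $H^1(\hat S\setminus \hat D,\mathbb R)$ to lie in a finite-rank, $\gamma$-stable subgroup of $\mathbb R$; but a $C$-invariant finitely-generated additive subgroup of $\mathbb R$ with $C>1$ must be trivial, so $\mathrm{Im}(a) = \mathrm{Im}(b) = 0$, yielding $b/a \in \mathbb R^*$. The main obstacle is making this period/dynamical argument precise: carefully controlling the dimension of the relevant cohomology quotient on the non-compact cover $\hat S$, and ruling out non-discrete $C$-invariant subgroups of $\mathbb R$ that could accommodate the $\gamma$-orbit of a non-zero period.
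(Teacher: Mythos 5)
Your local analysis at an intersection point is sound and agrees with the paper: using $d_{-\alpha}\omlog=0$ (Lemma~\ref{l:logarithmic-forms}(a)) to get the normal form $a\,\frac{dz_1}{z_1}+b\,\frac{dz_2}{z_2}+dH$ with $a,b\neq 0$, reading off non-degeneracy and the characteristic numbers $-b/a$, $-a/b$, and reducing both (i) and (ii) to the single claim $b/a\in\R^*$ — all of this is essentially the paper's first step (the paper gets the constancy of the residues from compactness of the rational components on the $\Z$-cover rather than from Deligne's normal form, but that is a cosmetic difference). The gap is in the third step, which is the actual content of the lemma. Your period argument aims to prove $\mathrm{Im}(a)=\mathrm{Im}(b)=0$, but this statement is not invariant under rescaling $\omlog$ by a complex constant, whereas every hypothesis you use is: applying the same argument to $i\omlog$ would yield $\mathrm{Re}(a)=0$ as well, hence $a=0$, a contradiction. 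So the argument cannot close. Concretely, the step that fails is the claimed finite-rank constraint on the period group of $\mathrm{Re}(\hat\omlog)$: the divisor $\hat D$ has infinitely many components (one $\gamma$-orbit per component of $D$), so $H^1(\hat S\setminus\hat D,\R)$ is infinite-dimensional and the exact sequence of the pair gives no finiteness. A $C$-invariant additive subgroup of $\R$ containing a nonzero element is simply dense in $\R$ (it contains $C^{-n}x$ for all $n$), and a closed $1$-form on a non-compact manifold may perfectly well have a dense, non-finitely-generated period group; there is no contradiction to extract.

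The paper's proof of reality is of a completely different, dynamical nature, and you will not get it from cohomology of the cover. One computes that the Camacho--Sad index of $\cal F$ along $D_j$ at $p$ equals the characteristic number $\lambda(p)=-a_1/a_2$. If $\lambda(p)\notin\R$, the classification of non-degenerate singularities forces $p$ to be in the Poincar\'e domain with the two branches of $D$ as separatrices, and the holonomy of each separatrix is generated by a genuine contraction $z\mapsto e^{2\pi i\lambda^{\pm1}}z$. Consequently every leaf $L$ of $\cal F$ accumulates on $D$ while being closed in $S\setminus D$, so $\bar L$ is not an analytic curve. On the other hand, since $S$ is neither Enoki nor Inoue--Hirzebruch, Enoki's theorem gives that the intersection matrix of $D$ is negative definite, so $D$ contracts by Grauert to a normal point $f:S\to X$; the Remmert--Stein--Shiffman theorem then shows $\overline{f(L)}$ is analytic, hence $\bar L\subseteq f^{-1}(\overline{f(L)})$ is contained in an analytic curve — a contradiction. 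This forces $\lambda(p)\in\R^*$, and then $\frac{\Res_{\alpha}(\omlog)_{\tilde D_i}}{\Res_{\alpha}(\omlog)_{\tilde D_j}}=-\lambda(p)\in\R^*$. If you want to salvage your write-up, you should replace your third step by an argument of this type; the first two steps can stay.
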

\begin{proof} Using a result of Nakamura~\cite[Theorem~10.2]{Nakamura}, under the hypotheses of the Lemma, we know that each component $D_j$ is either smooth rational curve, or is a rational curve with a double-point singularity. Let $\hat\pi:\hat S\to S$ be the minimal $\Z$-cover, see Definition~\ref{d:cyclic}, and $\hat D=\hat\pi^{-1}(D)$, $\hat\a:=\hat\pi^*\a$, $\hat\a=d\hat h$, $\hat\omlog=\hat\pi^*\omlog$.  Each component $\hat D_j$ is again a rational curve. In a local chart $\hat U$ around $\hat D_1 \cap \hat D_2$ we have (see\eqref{twisted-log})
$$e^{\hat h}\hat\omlog=\omlog_0+g_1(z)\frac{dz_1}{z_1}+g_2(z)\frac{dz_2}{z_2}$$
where $\omlog_0$, $g_1$, $g_2$ are holomorphic.  As $\hat D_i$ are compact curves,  it follows that the restriction of $g_1(0, z_2)$ on $\hat D_1$ is a constant, say $a_1$, and similarly, the restriction of $g_2(z_1, 0)$ to $\hat D_2$ is a constant $a_2$. Therefore
$$Res_{\hat\alpha}(\hat \theta)_{\mid z_1=0}=e^{-\hat h} g_1(0,z_2)= e^{-\hat h} a_1, \qquad Res_{\hat\alpha}(\hat \theta)_{\mid z_2=0}=e^{-\hat h} g_2(z_1,0)= e^{-\hat h} a_2.$$
Notice that if $a_1=0$ (resp. $a_2=0$) then $\hat D_1$ (resp. $\hat D_2$) will not be a pole, a contradiction. It thus follows that the holomorphic foliation $\cal F$ defined by $\theta$  has a non-degenerate singularity at $p\in D_1 \cap D_2$, with corresponding characteristic numbers $(\lambda, \lambda^{-1}):=(-\frac{a_1}{a_2}, -\frac{a_2}{a_1})$, and, moreover 
\[ \frac{a_1}{a_2} = \frac{Res_\alpha(\theta)_{\tilde D_1}}{Res_\alpha(\theta)_{\tilde D_2}}.\]
 We next compute the Camacho--Sad indices  $CS$ of the foliation $\cal F$ along the curves $D_i$ of $D$ (see \cite{CS82}):
\begin{equation}\label{comput}
\begin{split} 
CS(\cal F,D_2,D_2\cap D_1) &:=  Res_{z_1=0}\left(\frac{\part}{\part z_2}(-\frac{z_2g_1}{z_1g_2})_{\mid D_2}\right)  \\
                                             &= Res_{z_1=0}\left( -\frac{g_1(z_1,0)}{z_1g_2(z_1,0)}\right)\\
                                             &= -\frac{g_1(0,0)}{g_2(0,0)}=-\frac{a_1}{a_2} =-\frac{Res_\alpha(\theta)_{\tilde D_1}}{Res_\alpha(\theta)_{\tilde D_2}}\\
                                             &= \lambda(p).
\end{split}\end{equation}
Recall that by \cite{CS82}, $\cal F$ admits a \emph{separatrix} through its non-degenerate singularity $p=(0,0)$, i.e. there exists an $\cal F$-invariant complex analytic curve passing through $(0,0)$. Looking at the classification of non-degenerate singularities of foliations (see e.g. \cite[pp. 2-3]{B00}),  the only possibility with $\l=\l(p)\not\in\bb R$ are singularities in the  Poincar\'e domain with two separatrices $\{z_i=0\}$, $i=1,2$. The holonomy of each separatrix  is generated by the contraction (or its inverse) $z_i\mapsto \mu z_i$ with $\mu=exp(2\pi i \l^{\pm 1})$. As the holonomy group is  a contraction in this case, we see that any leaf $L$ accumulates on $D$ and all intersection points $p$ have complex non-real characteristic numbers $\l^{\pm 1}(p)$. 
Furthermore,  any  leaf $L$ is closed in $S\setminus D$. In particular $\bar L$ is not analytic. Under the hypotheses of the Lemma and Enoki theorem~\cite{enoki}, the intersection matrix of the components $D_i$ of $D$ is negative definite. We can then contract $D$ onto a normal singular point $f: S\to X$  by Grauert theorem (see e.g. \cite[Theorem~4.9]{Laufer71}). Since the Hausdorff measure of dimension one of a point is zero,  a theorem of Remmert--Stein--Shiffman \cite{schiffman68} shows that $M=\overline{f(L)}$ is an analytic curve. But then $\bar L$ is contained in the analytic curve $f^{-1}(M)$,  a contradiction.  As $a_1=g_1(0)\neq 0$ and $a_2=g_2(0)\neq 0$, we thus conclude $\lambda(p) \in \R^*$ and  $\frac{Res_\alpha(\theta)_{\tilde D_i}}{Res_\alpha(\theta)_{\tilde D_j}}= -\lambda(p) \in \R^*$ (see \eqref{comput}). \end{proof}

\begin{lem}\label{exact-current} Under the hypotheses of Lemma~\ref{singularity-foliation}, suppose that $\cal L=\cal L_\alpha$ for a \emph{real} deRham class $0\neq [\alpha]\in H^1_{dR}(S, \R)$ and that the characteristic numbers at the intersection of curves of $D$ are \emph{negative reals}. Then, we can multiply $\theta$ by a non-zero complex number such that the residues $Res_{\alpha}(\omlog)$ defined in \eqref{eq:Res} are real valued and positive,  and the real part $\Re(P_{\omlog})$ of the current $P_{\omlog}$ associated to $\omlog$ via \eqref{omega-current}  is exact, i.e. satisfies $\Re(P_{\omlog})= d_{\alpha} \tau$ for a degree $0$ current $\tau$ on $S$. Furthermore,     \[d_{\alpha}d^c_{\alpha} \tau = 2\pi \Res_{\alpha}(\omlog)T_{D}>0,\]
and $[\alpha] \notin \tauc(S).$
\end{lem}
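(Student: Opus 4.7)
The plan is to produce the $d_\alpha$-primitive $\tau$ by lifting $\theta$ to the minimal $\Z$-cover $\hat S$, where the twisting becomes trivial and $\Re(P_\theta)$ acquires an ordinary primitive, then rescaling and descending. First I would normalize the residues. The Camacho--Sad computation in the proof of Lemma~\ref{singularity-foliation} gives $\lambda(p)=-\Res_\alpha(\theta)_{D_i}/\Res_\alpha(\theta)_{D_j}$ at every intersection $p=D_i\cap D_j$, so the hypothesis that the characteristic numbers are negative real yields positive real ratios of residues across every intersection. Writing $\Res_\alpha(\theta)_{D_j}=e^{h_j}a_j$ with $h_j$ real-valued and $a_j\in\C^*$ a constant (as each $D_j$ is a rational curve and $\alpha$ is real), and using that $D$ is connected, the constants $a_j$ lie on a single complex ray; scaling $\theta$ by a nonzero complex constant renders every $\Res_\alpha(\theta)_{D_j}$ positive real-valued.

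Next I would construct $\tau$ via the $\Z$-cover. Let $\hat\pi:\hat S\to S$ be the minimal $\Z$-cover of Definition~\ref{d:cyclic}, $\gamma$ its generator, and $\hat h$ a smooth primitive of $\hat\alpha:=\hat\pi^*\alpha$ with $\hat h\circ\gamma-\hat h=c\neq 0$ (which holds since $[\alpha]\neq 0$, by Remark~\ref{r:convention}). Multiplying by the appropriate conformal factor dictated by Section~\ref{s:twisted-differentials}, $\hat\omega:=e^{\pm\hat h}\hat\pi^*\theta$ becomes an ordinary closed logarithmic $1$-form on $\hat S$ with positive real residues, satisfying $\gamma^*\hat\omega=e^{\pm c}\hat\omega$. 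The untwisted case of \eqref{log-current} gives $dP_{\hat\omega}=2\pi i\,\Res(\hat\omega)T_{\hat D}$, which is purely imaginary, so $\Re(P_{\hat\omega})$ is a $d$-closed real degree $1$ current on $\hat S$. Using that $H^1_{dR}(\hat S,\R)$ vanishes in the cohomology of currents (by construction of $\hat S$, together with standard elliptic regularity identifying current and smooth cohomologies), solve $\Re(P_{\hat\omega})=d\hat v$ for a degree $0$ current $\hat v$; the equivariance of $\Re(P_{\hat\omega})$ forces $\gamma^*\hat v-e^{\pm c}\hat v$ to be a constant, which can be absorbed (using $c\neq 0$) by replacing $\hat v$ with $\hat v+r$ for a suitable $r\in\R$. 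After this correction $\gamma^*\hat v=e^{\pm c}\hat v$, so $\hat\tau:=e^{\mp\hat h}\hat v$ is $\gamma$-invariant and descends to a degree $0$ current $\tau$ on $S$, and the twisted Leibniz identity $d_{\hat\alpha}(e^{\mp\hat h}\hat v)=e^{\mp\hat h}d\hat v$ yields $d_\alpha\tau=\Re(P_\theta)$ on $S$.

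For the remaining assertions I would decompose $d_\alpha\tau=\Re(P_\theta)=\tfrac{1}{2}(P_\theta+\overline{P_\theta})$ into $(1,0)$ and $(0,1)$ types to read off $\partial_\alpha\tau=\tfrac{1}{2}P_\theta$ and $\bar\partial_\alpha\tau=\tfrac{1}{2}\overline{P_\theta}$. Then \eqref{log-current} gives $\bar\partial_\alpha\partial_\alpha\tau=\pi i\,\Res_\alpha(\theta)T_D$, and the identities $\partial_\alpha\bar\partial_\alpha=-\bar\partial_\alpha\partial_\alpha$ (valid for $\alpha$ closed) and $d_\alpha d^c_\alpha=2i\,\partial_\alpha\bar\partial_\alpha$ combine to give $d_\alpha d^c_\alpha\tau=2\pi\,\Res_\alpha(\theta)T_D$. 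Since the residues are positive, this is a nontrivial weakly positive current which is $d_\alpha d^c_\alpha$-exact, so Proposition~\ref{p:conformal-dual} rules out the existence of a $d_\alpha d^c_\alpha$-closed positive-definite $(1,1)$-form on $S$, and Proposition~\ref{p:lcs-characterization} forces $[\alpha]\notin\tauc(S)$.

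The main obstacle is the second paragraph: pinning down the sign of the conformal factor compatible with the paper's convention for sections of $\Omega^1(\log D)\otimes\cL_\alpha$, solving $\Re(P_{\hat\omega})=d\hat v$ at the level of currents on $\hat S$ (this is where the vanishing of $H^1_{dR}(\hat S,\R)$ enters, which is known for the $\Z$-covers of all known class VII surfaces and must be invoked for the general setup), and fine-tuning $\hat v$ by an additive constant to restore the $\gamma$-equivariance that is needed for $\hat\tau$ to descend to an honest current on $S$ satisfying the prescribed twisted equation.
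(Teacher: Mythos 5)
Your overall strategy --- lift to the minimal $\Z$-cover, untwist, find an honest primitive of $\Re(P_{\hat\omega})$, and descend after correcting the equivariance --- is a genuinely different route from the paper's, which stays on the compact surface $S$ and argues that the $d_\alpha$-closed current $R=\Re(P_\theta)$ represents a class in $H_1^{\alpha}(S,\R)\cong H^3_{-\alpha}(S,\R)$, which vanishes because $S$ is diffeomorphic to $(S^1\times S^3)\sharp k\overline{\C P^2}$ and $[\alpha]\neq 0$. Your residue normalization, the equivariance correction by an additive constant (using $c\neq 0$), and the final type decomposition leading to $d_\alpha d^c_\alpha\tau=2\pi\Res_\alpha(\theta)T_D$ and $[\alpha]\notin\tauc(S)$ all match the paper in substance. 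However, there are two concrete gaps.

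First, you assert that ``the untwisted case of \eqref{log-current} gives $dP_{\hat\omega}=2\pi i\,\Res(\hat\omega)T_{\hat D}$,'' but \eqref{log-current} only computes the $\bar\partial$-component of $dP_{\hat\omega}$; you still need $\partial P_{\hat\omega}=0$ (equivalently $\partial_\alpha P_\theta=0$ on $S$), and without it $d\,\Re(P_{\hat\omega})$ equals $\Re(\partial P_{\hat\omega})$, which need not vanish. The paper closes this by observing that $Q:=\partial_\alpha P_\theta$ is a $\bar\partial_\alpha$-closed $(2,0)$-current, hence (by elliptic regularity) a holomorphic section of $K_S\otimes\cal L_\alpha$, which is zero since $S$ is in class ${\rm VII}_0^+$ and $\cal L_\alpha$ is flat; some such argument (or a direct local computation using $d_{-\alpha}\theta=0$ from Lemma~\ref{l:logarithmic-forms}(a) and the order of the pole of $d\theta$) must be supplied. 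Second, your key analytic step requires $H^1_{dR}(\hat S,\R)=0$ at the level of currents; Definition~\ref{d:cyclic} only guarantees $\hat\pi^*H^1_{dR}(S,\R)=\{0\}$, and you explicitly leave the general (possibly unknown) surface case open. This gap is fillable with exactly the input the paper uses: by Lemma~\ref{l:logarithmic-forms}(b) the divisor $D$ contains a cycle of rational curves, so by Nakamura's theorem \cite{N90} $S$ is diffeomorphic to $(S^1\times S^3)\sharp k\overline{\C P^2}$, hence $\pi_1(S)\cong\Z$ and $\hat S$ is simply connected, whence $H^1_{dR}(\hat S,\R)=0$ (and current cohomology agrees with de Rham cohomology). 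Without invoking the presence of the cycle, your argument does not cover unknown class ${\rm VII}$ surfaces, which is precisely the case the lemma is designed for.
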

\begin{proof}  The argument at the beginning of the proof of Lemma~\ref{singularity-foliation} shows that  $\Res_{\alpha}(\omlog)_{|_{\tilde D_j}} \in H^0(D_j, \cal L_{\alpha})$ is of the form $e^{-h_j}a_j$ where $h_j$ is a the restriction of a smooth function to $D_j$ and $a_j \neq 0$ is a complex number. Let us fix one such component $D_j \subset D$. By Lemma \ref{singularity-foliation},  and using the connectedness of $D$, it follows that if we  multiply $\theta$ by the non-zero constant $1/a_j$, we  have that  $Res_\a(\omlog)$ is \emph{real-valued} and non-zero along  any other component $D_i$, whence on $D$. Furthermore, if all the characteristic numbers of $\cal F$ are negative, by \eqref{comput}, we may also assume that 
$\Res_{\alpha}(\omlog)>0$. 

We now notice that the RHS of \eqref{log-current} is $d_{\alpha}$-closed (as a current on $S$), which follows directly from the  formula \eqref{twisted-log} and the fact that $(g_j)_{|_{D_j}}=a_j$ are constants. It follows that  $Q:=\partial_{\alpha}P_{\omlog}$ is a $\bar\partial_{\alpha}$-closed current of bi-degree $(2,0)$, and thus is of the form $Q=T_{\psi}$ for a $(2,0)$-form $\psi$ on $S$ with $\bar \partial_{-\alpha}\psi=0$, i.e. $\psi \in H^{0}(S, K_S\otimes \cal L_{\alpha})$. As $S$ is in the class VII$_0^+$ (and $\cal L$ is flat) $\psi=0$ (see e.g. \cite[Remark~2.21]{Dl}), and thus $Q=0$.  We conclude that 
\[d_{\alpha} P_{\omlog} = \bar \partial_{\alpha} P_{\omlog} = 2\pi i \Res_{\alpha}(\omlog) T_D.\]
Taking real parts  yields $d_{\alpha} R =0$,  where $R:=\Re(P_{\omlog})$.  Thus, $R$ gives rise to a class $[R]\in H_1^{\alpha} (S, \R)\cong \big(H^1_{\alpha}(S, \R)\big)^*\cong H^3_{-\alpha}(S, \R)$, as discussed in Section~\ref{s:twisted-differentials}.
By Lemma~\ref{l:logarithmic-forms}-(b), $S$ contains a cycle of rational curves and,  therefore,  is diffeomorphic to $(S^1\times S^3)\sharp k \overline{\C P}^2$ by the results in \cite{N90}. It  follows from  \cite[Theorem 1.5]{lcs1} that either $\cal L =\cal O$ is trivial, or else $[R]=0$ and hence $R= d_{\alpha} \tau$.  As we have assumed $[\alpha] \neq 0$, the first possibility is impossible. 
We thus conclude that $d_{\alpha}d^c_{\alpha} \tau = 2\pi \Res_{\alpha}(\omlog)T_{D}>0$ for a degree zero (real valued) current $\tau$. The  last claim then follows from Propositions~\ref{p:conformal-dual}  and \ref {p:lcs-characterization}. \end{proof}


\begin{proof}[{\bf Proof of Theorem~\ref{thm:logarithmic-forms}}]  If $b=0$, we must be in the situation described in (a) by Lemma~\ref{trivialtwisting} and Theorem~\ref{main} (b)\&(d). 
We suppose from now on $b<0$. If $b_2(S)=0$, by~\cite{andrei} $S$ is either a Hopf surface or an Inoue--Bombieri surface. The Hopf surfaces do not admit  twisted logarithmic $1$-forms with values in a flat line bundle $\cL_{b}$ with $b<0$ by Lemma~\ref{Hopf mu>0}. The non-zero twisted logarithmic $1$-forms on an Inoue--Bombieri surface have values in the flat bundle $\cL_a$ with $\tauc(S)=\{ a \}$ by Lemma~\ref{I-B1-forme}. We thus recover the case (c) of Theorem~\ref{thm:logarithmic-forms}, see Theorem~\ref{main} (a).

From now on we assume that $b_2(S)>0$ and $b\neq 0$, i.e. the surface is in the class ${\rm VII}_0^+$.  
We are thus in the setup of Lemma~\ref{l:logarithmic-forms}. 

If $D$ has two connected components, $S$ is an Inoue--Hirzebruch surface by Lemma~\ref{l:logarithmic-forms}-(b). In this case, there are two flat holomorphic line bundles $\cL=\cL_{\mu_i}, \, i=1,2$ with $H^0(S, \Omega^1(\log D)\otimes \cal L_{\mu_i})\neq 0$,   corresponds to a real numbers  $-1<\mu_1<1$ or  $\mu_2 >1$ \cite{DO}.  By Remark~\ref{convention1}, $\cL_{\mu_2}=\cL_{b}$ with $b<0$, and in this case, by Example~\ref{ex:Inoue-Hirzebruch} and Lemma~\ref{ex:log-holom},  $\tauc(S) \subset (-\infty, b)$, i.e. we obtain the case (b) of the Theorem.

We now suppose that $D$ has one connected component.  In this case, by Lemma~\ref{exact-current} we can assume without loss that ${\rm Res}_{\beta}(\theta)>0$ and  thus  $b=[\beta] \notin \tauc(S)$. Recall that  $\tauc(S)$ is an open interval by Theorem~\ref{thm:tau-structure}. Suppose $b$ is an upper bound of $\tauc(S)$. By Lemma~\ref{exact-current} again, there exists a degree $0$ curent $\tau$ on $S$ satisfying $T=d_{\beta} d^c_{\beta} \tau = 2\pi \Res_{\beta}(\omlog)T_{D}>0, \, T\neq 0$. We conclude by Proposition~\ref{p:sign} that $\tau <  0$. By Theorem~\ref{thm:brunella}, $S$ must be a hyperbolic Kato surface (we assume $b_2(S)>0$ at this point) and furthermore it must be of intermediate type as we supposed that the maximal divisor $D$ is connected. We recover again the case (b) of the Theorem.  The only remaining case is that $\tauc(S) \subset (b, \infty)$, a situation  which we  recorded in the case (d) of the Theorem. \end{proof}

\end{document}